\newtheorem{theorem}{Theorem}[section]
\newtheorem{lemma}[theorem]{Lemma}
\newtheorem{claim}[theorem]{Claim}
\newtheorem{proposition}[theorem]{Proposition}
\newtheorem{corollary}[theorem]{Corollary}
\theoremstyle{definition}
\theoremstyle{remark}
\newtheorem{remark}[theorem]{Remark}
\numberwithin{equation}{section}
\newcommand{\RR}{{\mathbb R}}
\begin{document}

\title{An Immersed $S^2$ Self-Shrinker}

\author{Gregory Drugan}

\address{Department of Mathematics, University of Washington, Seattle, WA 98195}

\email{drugan@math.washington.edu}

\thanks{This work was partially supported by NSF RTG [DMS-0838212].}


\begin{abstract}
We construct an immersed and non-embedded $S^2$ self-shrinker.
\end{abstract}

\maketitle


\section{Introduction}

An immersion $F$ from a two-dimensional manifold $M$ into $\RR^3$ is a self-shrinker if it satisfies
\begin{equation}
\label{ss}
\Delta_g F = - \frac{1}{2}F^{\perp},
\end{equation}
where $g$ is the metric on $M$ induced by the immersion, $\Delta_g$ is the Laplace-Beltrami operator, and $F^{\perp}(p)$ is the projection of $F(p)$ into the normal space $N_p M$. When $F: M \to \RR^3$ satisfies~(\ref{ss}), the family of submanifolds $M_t = \sqrt{-t}F(M)$ is a solution of mean curvature flow for $t \in (-\infty, 0 )$. In the case where $M$ is compact, the rescalings $M_t$ shrink to the origin as $t$ approaches $0$ (hence the name self-shrinker). It is a consequence of Huisken's monotonicity formula~\cite{Hu} that a solution of mean curvature flow behaves asymptotically like a self-shrinker at a type I singularity. So, not only do self-shrinkers provide precious examples of solutions of mean curvature flow, but they also describe the behavior of mean curvature flow at certain singular points where the curvature blows-up. The simplest examples of self-shrinkers in $\RR^3$ are the sphere of radius 2 centered at the origin (the standard sphere), cylinders with an axis through the origin and radius $\sqrt{2}$, and planes through the origin. In this paper, we construct an immersed and non-embedded $S^2$ self-shrinker in $\RR^3$.
\begin{theorem}
\label{thm:1}
There exists an immersion $F: S^2 \to \RR^3$ satisfying $\Delta_g F = -\frac{1}{2} F^\perp$, and $F$ is not an embedding.
\end{theorem}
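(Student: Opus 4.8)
\emph{Proof proposal.} The plan is to find $F$ among surfaces of revolution. Write points of $\RR^3$ as $(x,\,r\cos\theta,\,r\sin\theta)$ with $r\ge 0$; a rotationally symmetric immersion is then determined by its \emph{profile curve} $\gamma(s)=(x(s),r(s))$ in the half-plane $\{r>0\}$, and a direct computation shows that the surface of revolution swept out by $\gamma$ satisfies~(\ref{ss}) if and only if $\gamma$, parametrized by Euclidean arclength with unit normal $\nu=(-r',x')$, solves
\[
  x'r''-r'x''\;=\;\frac{x'}{r}\;-\;\tfrac12\bigl(rx'-xr'\bigr),
\]
equivalently, $\gamma$ is a geodesic of the conformally Euclidean metric $\bar g=r^{2}e^{-(x^{2}+r^{2})/2}\,(dx^{2}+dr^{2})$ on $\{r>0\}$ — the surface-of-revolution analogue of Angenent's reformulation of the self-shrinker equation. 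The standard sphere corresponds to the semicircle $\{x^{2}+r^{2}=4,\ r\ge 0\}$, which is one such solution, and the equation is invariant under the reflection $x\mapsto -x$.

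First I would analyze solutions near the rotation axis $\{r=0\}$, where $\bar g$ degenerates. A solution $\gamma$ that reaches a point $(a,0)$ of the axis must do so orthogonally ($x'\to 0$, $r'\to\pm1$); writing the curve near that point as a graph $x=a+\tfrac12\phi(r)\,r^{2}+O(r^{4})$ over the $r$-axis, the structure of the ODE at the singular point $r=0$ forces $\phi(0)$ and all higher Taylor coefficients, and the resulting surface of revolution closes up $C^{\infty}$ at a ``pole'' at $(a,0,0)$. Conversely, through each axis point $(a,0)$ there is, up to $x\mapsto -x$ and the choice of side, a unique such solution. This produces a one-parameter family $\gamma_{a}$, $a\ge 0$, with $\gamma_{a}$ leaving $(a,0)$ into $\{r>0\}$; the semicircle is $\gamma_{2}$.

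Next I would run a shooting argument along this family. A barrier argument — using the confining effect of the shrinker term $-\tfrac12F^{\perp}$ together with comparison against the cylinder $r\equiv\sqrt2$ and the semicircle $\gamma_{2}$ — should show that $\gamma_{a}$ cannot escape to infinity and must return to meet the reflection plane $\{x=0\}$, say at a point $(0,\rho(a))$ with tangent making angle $\theta(a)$ with that plane, where $\rho$ and $\theta$ depend continuously on $a$. Whenever $\theta(a)=\pi/2$, the reflection of $\gamma_{a}$ across $\{x=0\}$ glues onto $\gamma_{a}$ in a $C^{1}$ — hence, by the ODE and the $x\mapsto -x$ symmetry, $C^{\infty}$ — closed profile curve with two orthogonal ends on the axis, and rotating it yields a closed, rotationally and reflection symmetric immersion $F_{a}\colon S^{2}\to\RR^{3}$ solving~(\ref{ss}); $F_{a}$ is embedded exactly when that closed profile curve is simple. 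For $a=2$ this recovers the standard sphere. The goal is to exhibit a second parameter $a^{\ast}$ with $\theta(a^{\ast})=\pi/2$ for which the closed profile curve is \emph{not} simple: for $a$ suitably larger than $2$, $\gamma_{a}$ overshoots the top, swings around the origin, and re-crosses $\{x=0\}$, and I would apply the intermediate value theorem to $\theta$ — tracked together with the number of times $\gamma_{a}$ winds, so the relevant quantity genuinely changes sign between the embedded regime near $a=2$ and a large-$a$ regime — to land on such an $a^{\ast}$. The associated $F_{a^{\ast}}$ is then an immersed, non-embedded $S^{2}$ self-shrinker.

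The main obstacle is exactly this global, qualitative control of the family $\gamma_{a}$: proving that each trajectory actually returns to $\{x=0\}$ and winds only finitely often, that $\theta(a)$ (with its winding data) can be tracked precisely enough to apply the intermediate value theorem, and — most delicate — that at the parameter one lands on the closed profile curve genuinely self-intersects rather than quietly closing up embedded (this must be checked directly, not by appeal to a classification). This calls for careful comparison arguments — barriers built from the sphere, the cylinder, and the behavior of $\bar g$-geodesics near $\{r=0\}$ and near the origin — together with control of $\gamma_{a}$ as $a\to\infty$; the remaining verifications, that orthogonal incidence at the axis and at $\{x=0\}$ makes the glued surface a smooth immersion of $S^{2}$, are routine.
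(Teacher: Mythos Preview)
Your high-level strategy --- rotational symmetry, shoot from the rotation axis with a one-parameter family, reflect across a symmetry plane, and use a continuity argument to land on a profile that closes up with a self-intersection --- is exactly the paper's approach. The geodesic reformulation with the conformal metric $\bar g$ is equivalent to the paper's direct ODE analysis (the paper works with the graphical equations~(\ref{gamma:ode}) and~(\ref{alpha:eq}) rather than Angenent's metric), so that difference is cosmetic.

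There are, however, two substantive gaps. First, the regime you propose is the wrong one: the paper works with \emph{small} initial height $a\in(0,2)$, not $a>2$. For small $a$ the first branch travels far out (the turning radius $x_*$ satisfies $x_*\ge\sqrt{\log(2/\pi a^2)}\to\infty$), crosses the symmetry line at a controlled point, and the second branch $\beta$ can be shown by explicit ODE estimates (Section~\ref{beta:branch}) to develop a minimum and then overshoot the symmetry line again. Your picture of the $a>2$ regime --- overshooting the top and swinging around the origin --- is not substantiated, and the paper's estimates do not apply there.

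Second, the ``main obstacle'' you flag is precisely where the paper's content lies, and the mechanism is more delicate than a bare intermediate value theorem on the incidence angle. The paper does not track an angle $\theta(a)$ between two endpoint regimes; instead it takes
\[
b_0=\sup\{\tilde b:\ \forall b\le\tilde b,\ \beta_b'\ \text{vanishes somewhere and}\ \beta_b(x_{**}^b)>0\},
\]
and then argues separately that (i) the qualitative feature ``$\beta_{b_0}'$ vanishes somewhere'' persists at the supremum (Claims~\ref{c1:im}--\ref{c2:im}), which is not automatic since the very structure of the curve could degenerate, and (ii) $\beta_{b_0}(x_{**}^{b_0})=0$ by the continuity of the blow-up point (Proposition~\ref{beta:blowup:pt}). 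The self-intersection is then read off concretely: the first branch crosses the symmetry line at some $x_0\in[2,2\sqrt2]$ while $\beta_{b_0}$ meets it perpendicularly at $x_{**}^{b_0}<\sqrt2$, so $\gamma_{b_0}$ and $-\beta_{b_0}$ must cross transversally. Your outline would need all of this filled in, and in the small-$a$ regime rather than the large-$a$ one.
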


In 1989, Angenent~\cite{A} constructed an embedded self-shrinker with the topology type of a torus and provided numerical evidence for the existence of an immersed and non-embedded $S^2$ self-shrinker. (We note that the $S^2$ self-shrinker in Angenent's numerics is different from the one we construct.) In 1994, Chopp~\cite{Ch} described an algorithm for constructing surfaces that are approximately self-shrinkers and provided numerical evidence for the existence of a number of self-shrinkers, including compact, embedded self-shrinkers of genus 5 and 7. More recently, Kapouleas, Kleene, and M{\o}ller~\cite{KKM} and Nguyen~\cite{N1}--\cite{N3} used desingularization constructions to produce examples of complete, non-compact, embedded self-shrinkers with high genus in $\RR^3$. M{\o}ller~\cite{M} also used desingularization techniques to construct compact, embedded, high genus self-shrinkers in $\RR^3$. M{\o}ller's high genus examples, along with Angenent's torus and the standard sphere, are the only known examples of compact self-shrinkers in $\RR^3$. In contrast to these examples are several rigidity theorems for compact self-shrinkers. Huisken~\cite{Hu} showed that the only compact, mean-convex self-shrinker in $\RR^3$ is the standard sphere. In their study of generic singularities of mean curvature flow, Colding and Minicozzi~\cite{CM} showed that the only compact, embedded $F$-stable self-shrinker in $\RR^3$ is the standard sphere. As part of their classification of complete, embedded self-shrinkers with rotational symmetry, Kleene and M{\o}ller~\cite{KM} showed that the standard sphere is the only embedded $S^2$ self-shrinker with rotational symmetry. In an independent work~\cite{D}, we proved this result by showing that an embedded $S^2$ self-shrinker with rotational symmetry must be mean-convex. It is unknown whether or not the standard sphere is the only embedded $S^2$ self-shrinker in $\RR^3$.

The basic idea of the proof of Theorem~\ref{thm:1} is to construct a curve in the $(x,z)$-plane with self-intersections whose rotation about the $z$-axis is an $S^2$ self-shrinker. In this setting, the self-shrinker equation~(\ref{ss}) reduces to a differential equation. When the curve can be written in the form $(x, \gamma(x))$, the differential equation is
\begin{equation}
\label{gamma:ode}
\frac{\gamma''}{1+(\gamma')^2} = \left(\frac{1}{2}x - \frac{1}{x} \right) \gamma' -\frac{1}{2} \gamma.
\end{equation}
Using comparison arguments we describe the behavior of solutions of the differential equation for a range of initial conditions. Then, following the approach of Angenent in~\cite{A}, we use a continuity argument to find an initial condition that corresponds to a solution whose rotation about the $z$-axis is an immersed and non-embedded $S^2$ self-shrinker.

The curve we construct in the proof of Theorem~\ref{thm:1} (see Figure~\ref{fig_immersed}) is symmetric with respect to reflections across the $x$-axis, and it is enough to describe this curve as it travels from the positive $z$-axis to the point where it intersects the $x$-axis perpendicularly. We start the construction by studying solutions of~(\ref{gamma:ode}) with $\gamma(0) > 0$ and $\gamma'(0)=0$. Notice that one of the terms in the differential equation involves $\frac{1}{x}$, and hence this equation has a singularity at $x=0$. We begin Section~\ref{gamma:branch} by discussing the existence, uniqueness, and continuous dependence on initial height of solutions when $x$ is near $0$. As we move away from the origin, we can use existence theorems for differential equations to show that a solution $\gamma$ will exist until it blows-up. Next, we show that $\gamma$ is decreasing and concave down, and for small initial height, it must cross the $x$-axis before it blows-up. By a theorem of Lu Wang~\cite{W}, we know that $\gamma$ blows-up at a finite point $x_*$, and we use a comparison argument to estimate $\gamma'$ and show there is a finite point $z_*$ so that $\gamma(x_*) = z_*$. We finish Section~\ref{gamma:branch} by showing $x_* \to \infty$ and $z_* \to 0$ as the initial height approaches $0$.

\begin{figure}
\begin{center}
\includegraphics[width=3.5in]{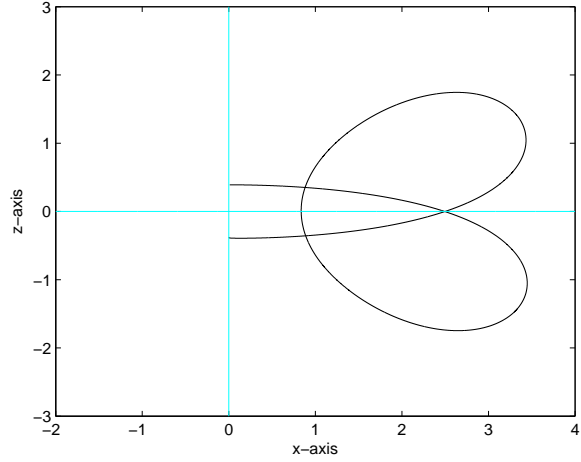}
\end{center}
\caption{\small A numerical approximation of a curve whose rotation about the $z$-axis is an immersed and non-embedded $S^2$ self-shrinker in $\RR^3$.}
\label{fig_immersed}
\end{figure}

In Section~\ref{cfsb}, we study the behavior of the curve $(x,\gamma(x))$ near the point $(x_*,z_*)$. Writing the curve $(x,\gamma(x))$ in the form $(\alpha(z),z)$, we get a solution of the differential equation
\begin{equation}
\label{alpha:eq}
\frac{\alpha''}{1+(\alpha')^2} = \left( \frac{1}{\alpha} - \frac{1}{2} \alpha \right) + \frac{1}{2} z \alpha'.
\end{equation}
At $(x_*,z_*)$, we have $\alpha(z_*) = x_*$ and $\alpha'(z_*) = 0$, and by the existence theory for differential equations, we can continue the curve $(x,\gamma(x))$ past the blow-up point $(x_*,z_*)$ along $(\alpha(z),z)$. Following $(\alpha(z),z)$, we show that the curve makes a turn at $(x_*,z_*)$ and heads back towards the $z$-axis. The curve heading back towards the $z$-axis can be written as $(x,\beta(x))$, where $\beta(x)$ is a solution of~(\ref{gamma:ode}).  Applying existence, uniqueness, and continuity theorems to these differential equations, we discuss how the curves $\gamma$ and $\alpha$ and the point $(x_*,z_*)$ depend continuously on the initial height. We also show that $z_* < 0$ when $\gamma(0) \in (0,2)$. Although this last result is not essential to the construction, it is a nice consequence of the rigidity of compact, mean-convex self-shrinkers due to Huisken~\cite{Hu}.

In Section~\ref{beta:branch}, we study the solutions $\beta(x)$ as they travel from $(x_*,z_*)$ toward the $z$-axis. We know there exists a point $x_{**} \geq 0$ so that $\beta$ is a solution of~(\ref{gamma:ode}) on $(x_{**},x_*)$ and either $\beta$ blows-up as $x$ approaches $x_{**}$ or $x_{**}=0$. We show for small initial height $\gamma(0)$ that $\beta$ achieves a negative minimum at a point $x_m \in (x_{**}, x_*)$, $\beta$ is concave up, $x_{**}>0$, and $0<\beta(x_{**}) < \infty$. To prove $\beta(x_{**})>0$, we give a direct argument, which shows how the singular term in~(\ref{gamma:ode}) forces $\beta$ to cross the $x$-axis when $x_{**}$ is small. This direct crossing argument is different from the limiting argument used by Angenent in~\cite{A}, and the analysis of $\beta$ in this section leads to a different construction of Angenent's torus self-shrinker. We also note that M{\o}ller~\cite{M} constructed a torus self-shrinker with explicit estimates on the cross-sections, which he used to construct the high genus compact, embeddded self-shrinkers.

Finally, in Section~\ref{proof} we finish the proof of the Theorem~\ref{thm:1}. We let $\gamma_b$ be the solution of~(\ref{gamma:ode}) with $\gamma_b(0)=b$, and define $\beta_b$, $x_*^b$, $x_{**}^b$, and $x_m^b$ as above. Following Angenent's argument in~\cite{A}, we consider the intial height $b_0$ given by $$b_0 = \sup \{ \tilde{b} \, : \, \forall b \in (0, \tilde{b}], \exists x_m^b \in (x_{**}^b, x_*^b) \textrm{ so that } \beta_b'(x_m^b) = 0 \textrm{ and } \beta_b(x_{**}^b) > 0 \}.$$ Using continuity arguments we show that $\beta_{b_0}$ intersects the $x$-axis perpendicularly at $x_{**}^{b_0}$. Thus, the curve $\gamma_{b_0} \cup \beta_{b_0} \cup -\beta_{b_0} \cup -\gamma_{b_0}$ is a smooth curve in the right-half plane that intersects the $z$-axis perpendicularly at precisely two points (see Figure~\ref{fig_immersed}), and the rotation of this curve about the $z$-axis is an immersed and non-emedded $S^2$ self-shrinker in $\RR^3$.

\begin{remark}
The proof works in higher dimensions to give an immersed and non-embedded $S^n$ self-shrinker in $\RR^{n+1}$. In this setting, the $\frac{1}{x}$ singular term in~(\ref{gamma:ode}) is replaced with $\frac{n-1}{x}$. In the one-dimensional case, the singular term vanishes and solutions can cross over the line $x=0$ without the slope restriction that holds in higher dimensions. The compact one-dimensional self-shrinkers were completely classified by both Abresch and Langer~\cite{AL} and Epstein and Weinstein~\cite{EW}. In this case, the standard circle is the only embedded $S^1$ self-shrinker, and there are many immersed and non-embedded $S^1$ self-shrinkers.
\end{remark}


\section{The First Branch}
\label{gamma:branch}

In this section we study solutions of~(\ref{gamma:ode}) with $\gamma(0) = b >0$ and $\gamma'(0) = 0$. We begin by discussing the existence, uniqueness, and continuous dependence on initial height of solutions. After this, we use a variety of comparison estimates to describe the basic behavior of $\gamma$ when $b > 0$. Finally, we finish the section with a detailed description of $\gamma$ when the initial height $b>0$ is small.

\subsection{Existence of Solutions Near $x=0$}
\label{gamma:ps}

Notice that one of the terms in~(\ref{gamma:ode}) involves $\frac{1}{x}$, and hence this equation has a singularity at $x=0$. We have the following proposition addressing the existence, uniqueness, and continuous dependence of solutions on initial height when $x$ is near $0$.

\begin{proposition}
\label{ps:prop}
For any $b \in \RR$, there exists $A=A(b)>0$ and a unique analytic function $\gamma$ defined on $[0,1/A]$ so that $\gamma(0) = b$, $\gamma'(0)=0$, and $\gamma$ is a solution of~\emph{(\ref{gamma:ode})}. Moreover, $\gamma$ and $\gamma'$ depend continuously on $b$ as follows: For each $M>0$ there exists $A=A(M)>0$ with the property that for $\varepsilon >0$ there is a $\delta >0$ so that if $|b_1 -b_2| < \delta$ and $|b_i| \leq M$, then $|\gamma_1(x) -\gamma_2(x)| < \varepsilon$ and $|\gamma_1'(x) - \gamma_2'(x)| < \varepsilon$ for all $x\in[0,1/A]$, where $\gamma_i$ is the unique analytic solution of~\emph{(\ref{gamma:ode})} with $\gamma_i(0)=b_i$.
\end{proposition}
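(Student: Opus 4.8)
The plan is to handle the singularity at $x = 0$ via a power-series (Cauchy--Kovalevskaya-style) argument, rewriting the ODE in a form whose coefficients are analytic at the origin. First I would clear denominators in (\ref{gamma:ode}) to obtain
\begin{equation*}
x\,\gamma'' = \left(\tfrac{1}{2}x^2 - 1\right)\bigl(1+(\gamma')^2\bigr)\gamma' - \tfrac{1}{2}x\bigl(1+(\gamma')^2\bigr)\gamma,
\end{equation*}
and then look for a solution of the form $\gamma(x) = b + \sum_{k\ge 2} a_k x^k$ (the conditions $\gamma(0)=b$, $\gamma'(0)=0$ force $a_0 = b$, $a_1 = 0$). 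Substituting the series and comparing coefficients of $x^{k-1}$ on both sides gives, on the left, $k(k-1)a_k$, and on the right a polynomial expression in $b$ and $a_2,\dots,a_{k-1}$; crucially the factor $\tfrac{1}{2}x^2 - 1$ contributes $-1$ to the lowest-order term, so the recursion reads $k(k-1)a_k = -(\text{something linear in }a_k) + (\text{lower-order terms})$, i.e. $\bigl(k(k-1)+1\bigr)a_k = P_k(b,a_2,\dots,a_{k-1})$ — wait, one must check the $(\gamma')^2\gamma'$ term does not feed back into $a_k$ at order $x^{k-1}$, which it does not since $\gamma' = O(x)$ makes $(\gamma')^3 = O(x^3)$. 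Hence each $a_k$ is uniquely and explicitly determined by the previous ones, with no small divisors, which gives existence and uniqueness of a formal power series solution.

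Next I would establish convergence of this formal series on some interval $[0,1/A]$ with $A = A(b) > 0$. The standard method of majorants applies: since the right-hand side of the cleared equation is a polynomial in $\gamma$, $\gamma'$, $x$, one bounds $|a_k|$ by the coefficients $\tilde a_k$ of a majorizing series solving an auxiliary equation of the form $\tilde\gamma'' = C(1-\rho x)^{-1}\bigl(1 + (\tilde\gamma')^2\bigr)^2(|b| + \tilde\gamma + \tilde\gamma')$ or similar, whose solution is analytic near $0$ with a radius of convergence controllable explicitly in terms of $|b|$ and fixed constants. This yields the first assertion: a unique analytic $\gamma$ on $[0,1/A(b)]$ solving (\ref{gamma:ode}) with the prescribed data. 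Uniqueness within the class of $C^2$ solutions (not just analytic ones) follows because the cleared equation, solved for $\gamma''$, has the form $\gamma'' = x^{-1}(\text{analytic})$ where the analytic part vanishes to first order in $x$ after using $\gamma'(0)=0$ — more carefully, one sets $u = \gamma'/x$ (or $v = \gamma'$) and checks the resulting first-order system in $(\gamma, v)$ has a right-hand side satisfying a Lipschitz/Osgood condition even up to $x = 0$, so a Gronwall argument pins down the solution; alternatively uniqueness already falls out of the recursion once one knows any $C^2$ solution is forced to be analytic, which a bootstrap on the cleared equation gives.

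For the continuous-dependence statement, the point is to get a radius $A = A(M)$ that works uniformly for all $|b| \le M$: this is immediate from the majorant construction, since the majorizing equation can be built with $|b|$ replaced by $M$, giving a common interval $[0, 1/A(M)]$ on which every $\gamma_b$, $|b|\le M$, is analytic with uniformly bounded coefficients. On that fixed interval, $|a_k(b_1) - a_k(b_2)|$ is controlled inductively: each $a_k$ is a polynomial in $b, a_2, \dots, a_{k-1}$ with coefficients independent of $b$, so by induction $|a_k(b_1) - a_k(b_2)| \le C_k |b_1 - b_2|$ with $\sum C_k (1/A)^k < \infty$ by comparison with (the derivative in $b$ of) the majorant; summing gives $|\gamma_1(x) - \gamma_2(x)| \le C|b_1-b_2|$ and likewise for $\gamma_1' - \gamma_2'$, which is stronger than the stated $\varepsilon$--$\delta$ claim. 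I expect the main obstacle to be purely bookkeeping: verifying that the nonlinear terms $(\gamma')^2\gamma'$ and $(\gamma')^2\gamma$ genuinely do not obstruct the recursion (no contribution to $a_k$ at the critical order, thanks to $\gamma' = O(x)$) and setting up a clean majorant that simultaneously handles the $x^{-1}$ singularity and the quadratic-in-$\gamma'$ nonlinearity; once the majorant is in place, convergence, uniqueness, and continuous dependence all follow by the same estimate.
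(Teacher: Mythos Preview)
Your approach is essentially the same as the paper's: both expand $\gamma$ as a power series, derive a recursion for the coefficients showing they are uniquely determined, and then establish convergence (the paper's appendix does this via a direct inductive bound $|a_{2m}| \le A^{2m-1}/(2m)^3$ rather than majorants, and the paper also notes the alternative of citing Baouendi--Goulaouic on singular Cauchy problems). One small slip: the $-\gamma'$ term contributes $-k a_k$, not $-a_k$, at order $x^{k-1}$, so the recursion is $k^2 a_k = P_k(b,a_2,\dots,a_{k-1})$ (cf.\ the paper's formula~(\ref{coef:formula})) rather than $(k(k-1)+1)a_k$; this does not affect the argument.
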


\begin{proof}
We mention two proofs of the proposition. First, using a power series argument specific to the equation~(\ref{gamma:ode}) we established the existence, uniqueness, and continuity results as stated in the proposition. This argument is included in the appendix. Afterwards, Robin Graham informed us of the general reference~\cite{BG}, where the Cauchy problem for singular systems of partial differential equations was studied. Applying Theorem 2.2 from~\cite{BG} also shows that~(\ref{gamma:ode}) has a unique analytic solution in a neighborhood of $0$. We would like to thank Robin Graham for this reference.
\end{proof}


\subsection{Basic shape of $\gamma$}

Let $\gamma$ be the solution of~(\ref{gamma:ode}) with $\gamma(0) = b>0$ and $\gamma'(0)=0$. Then $\gamma''(0) = -b/4$ so that $\gamma$ starts out concave down. Taking derivatives of~(\ref{gamma:ode}), we have the following equations.
\begin{equation}
\label{gamma:eq:3}
\frac{\gamma'''}{1+(\gamma')^2} = \frac{2 \gamma' (\gamma'')^2}{( 1+(\gamma')^2 )^2} +  \left(\frac{1}{2}x - \frac{1}{x} \right) \gamma'' + \frac{1}{x^2} \gamma'
\end{equation}
and
\begin{eqnarray}
\label{gamma:eq:4}
\frac{\gamma^{(iv)}}{1+(\gamma')^2} & = & \frac{6 \gamma' \gamma'' \gamma''' + 2(\gamma'')^3}{( 1+(\gamma')^2 )^2} - \frac{8 (\gamma')^2 (\gamma'')^3}{( 1+(\gamma')^2 )^3}  + \left(\frac{1}{2}x - \frac{1}{x} \right) \gamma''' \\
& &  + \left(\frac{1}{2} +  \frac{2}{x^2} \right) \gamma'' - \frac{2}{x^3} \gamma' \nonumber
\end{eqnarray}

\begin{claim}
\label{claim:ccd}
$\gamma'' < 0$.
\end{claim}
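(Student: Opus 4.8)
The plan is to run a continuation/barrier argument on the ODE~(\ref{gamma:eq:3}) satisfied by $\gamma'''$. We already know from $\gamma(0)=b>0$, $\gamma'(0)=0$ that $\gamma''(0) = -b/4 < 0$, so by continuity $\gamma'' < 0$ on a maximal interval $[0,c)$. Suppose for contradiction that $c < \infty$ (i.e.\ $\gamma$ exists past $c$) and $\gamma''(c) = 0$ with $\gamma'' < 0$ on $[0,c)$. First I would extract the sign of $\gamma'$: since $\gamma'(0)=0$ and $\gamma'' < 0$ on $(0,c)$, we have $\gamma' < 0$ on $(0,c)$, and in particular $\gamma'(c) \le 0$. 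Now evaluate~(\ref{gamma:eq:3}) at $x=c$. The left side is $\gamma'''(c)/(1+(\gamma'(c))^2)$. On the right side, at $x=c$ the terms $\frac{2\gamma'(\gamma'')^2}{(1+(\gamma')^2)^2}$ and $\left(\frac12 x - \frac1x\right)\gamma''$ both vanish because $\gamma''(c)=0$, leaving $\frac{1}{c^2}\gamma'(c)$. Since $\gamma'(c) \le 0$ we get $\gamma'''(c) \le 0$.

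The subtle point is the borderline case $\gamma'(c) = 0$, which would give $\gamma'''(c) = 0$ and not immediately a contradiction (we want $\gamma'''(c) < 0$ to conclude that $\gamma''$ is strictly decreasing through $c$, contradicting $\gamma'' < 0$ before and $\gamma''(c)=0$). But $\gamma'(c)=0$ together with $\gamma''(c)=0$ and the original equation~(\ref{gamma:ode}) at $x=c$ forces $\gamma(c)=0$; then $\gamma,\gamma',\gamma''$ all vanish at $c$, and in fact by induction every derivative vanishes at $c$ — one can see this by repeatedly differentiating~(\ref{gamma:ode}) (as in~(\ref{gamma:eq:3}),~(\ref{gamma:eq:4})), where each higher derivative at $c$ is a polynomial combination of lower derivatives, all of which are $0$. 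Since $\gamma$ is analytic (Proposition~\ref{ps:prop}), this would force $\gamma \equiv 0$, contradicting $\gamma(0)=b>0$. Hence $\gamma'(c) < 0$ strictly, so $\gamma'''(c) = \frac{1}{c^2}(1+(\gamma'(c))^2)\gamma'(c) < 0$.

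Now $\gamma''(c) = 0$, $\gamma'' < 0$ on $[0,c)$, and $\gamma'''(c) < 0$: the last condition says $\gamma''$ is strictly decreasing at $c$, so for $x$ slightly less than $c$ we would have $\gamma''(x) > \gamma''(c) = 0$, contradicting $\gamma'' < 0$ on $[0,c)$. This contradiction shows no such finite $c$ exists, i.e.\ $\gamma'' < 0$ for as long as $\gamma$ is defined.

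The main obstacle is handling the degenerate case $\gamma'(c) = 0$ cleanly; everything hinges on using analyticity of $\gamma$ (from Proposition~\ref{ps:prop}) to rule out the solution collapsing to zero, which is what makes the strict inequality $\gamma'''(c) < 0$ go through. An alternative to the analyticity argument would be a direct uniqueness statement: the only solution of~(\ref{gamma:ode}) with $\gamma(c)=\gamma'(c)=0$ is $\gamma\equiv 0$ — but since the equation is singular at $x=0$ and generally not at $x=c>0$, standard ODE uniqueness applies there and gives the same conclusion without invoking analyticity.
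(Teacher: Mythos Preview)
Your argument is correct and follows the same route as the paper: assume a first zero $c$ of $\gamma''$, evaluate~(\ref{gamma:eq:3}) there, and derive a sign contradiction. The only difference is that you spend a paragraph handling the borderline case $\gamma'(c)=0$, which in fact cannot occur: since $\gamma''<0$ on $[0,c)$, the derivative $\gamma'$ is strictly decreasing on $[0,c]$, so $\gamma'(c)<\gamma'(0)=0$ directly (equivalently, $\gamma'(c)=\int_0^c\gamma''<0$). The paper simply asserts $\gamma'(\bar{x})<0$ for this reason and proceeds immediately to the contradiction $0\le \gamma'''(\bar{x})/(1+\gamma'(\bar{x})^2)=\bar{x}^{-2}\gamma'(\bar{x})<0$, avoiding any appeal to analyticity or ODE uniqueness.
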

\begin{proof}
Since $\gamma''(0)= -\frac{b}{4}$, we know that $\gamma''<0$ near 0. Suppose $\gamma''(x) = 0$ for some $x>0$. Choose $\bar{x}$ so that $\gamma''(\bar{x}) = 0$ and $\gamma''(x) < 0$ for $x \in [0,\bar{x})$. Then $\gamma'''(\bar{x}) \geq 0$. Also, $\gamma'(\bar{x}) < 0$ (since $\gamma'(0) = 0$). Using~(\ref{gamma:eq:3}), we see that $$0 \leq \frac{\gamma'''(\bar{x})}{1+\gamma'(\bar{x})^2} = \frac{1}{\bar{x}^2}\gamma'(\bar{x}) < 0,$$ which is a contradiction.
\end{proof}

In~\cite{W}, Lu Wang proved that an entire self-shrinker graph must be a plane. It follows that $\gamma$ cannot be defined on all of $[0,\infty)$, and therefore by the existence theory for differential equations there must be a point $x_* < \infty$ so that $\gamma$ blows-up at $x_*$ (blows-up in the sense that either $|\gamma|$ or $|\gamma'|$ goes to $\infty$ as $x$ goes to $x_*$). Since $\gamma''<0$ and $x_* < \infty$, it follows that $\lim_{x \to x_*} \gamma'(x) = -\infty$.

\begin{claim}
$x_* > \sqrt{2}$.
\end{claim}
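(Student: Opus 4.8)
The plan is to rule out $x_*\le\sqrt2$; together with the already-established facts that $x_*<\infty$ and $\gamma'(x)\to-\infty$ as $x\to x_*$, this gives the claim. So suppose $x_*\le\sqrt2$. First I would note that, since $\gamma''<0$ and $\gamma'(0)=0$, $\gamma$ is strictly decreasing on $(0,x_*)$, and every $x$ in this interval satisfies $x<\sqrt2$, so $\tfrac12 x-\tfrac1x<0$ and hence $\big(\tfrac12 x-\tfrac1x\big)\gamma'(x)>0$. Because the left-hand side of~(\ref{gamma:ode}) is negative, it follows that $\tfrac12\gamma(x)>\big(\tfrac12 x-\tfrac1x\big)\gamma'(x)>0$, so $\gamma>0$ on all of $(0,x_*)$, and in particular $\gamma$ decreases to a finite limit $z_*:=\lim_{x\to x_*^-}\gamma(x)\ge0$.

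Next I would pass to the inverse function near the blow-up point. Since $\gamma'<0$ on $(0,x_*)$, the curve $(x,\gamma(x))$ is also the graph $x=\alpha(z)$ of a smooth strictly decreasing function $\alpha\colon(z_*,b)\to(0,x_*)$, and the usual change of variables shows $\alpha$ solves~(\ref{alpha:eq}). From $\alpha'(z)=1/\gamma'(\alpha(z))$ and $\gamma'\to-\infty$ we get $\alpha(z)\to x_*$ and $\alpha'(z)\to0$ as $z\to z_*^+$, and then~(\ref{alpha:eq}) forces $\alpha''(z)\to\tfrac1{x_*}-\tfrac12 x_*$; thus $\alpha$ extends to a $C^2$ solution of~(\ref{alpha:eq}) on $[z_*,b)$ with $\alpha(z_*)=x_*$, $\alpha'(z_*)=0$, and $\alpha''(z_*)=\tfrac1{x_*}-\tfrac12 x_*=\tfrac{2-x_*^2}{2x_*}$. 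On the other hand $\alpha$ is strictly decreasing with $\alpha'(z_*)=0$, so $\alpha''(z_*)=\lim_{z\to z_*^+}\alpha'(z)/(z-z_*)\le0$, which forces $2-x_*^2\le0$, i.e. $x_*\ge\sqrt2$. Combined with the assumption $x_*\le\sqrt2$, this pins $x_*=\sqrt2$.

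To finish I would rule out $x_*=\sqrt2$ by comparing $\alpha$ with the cylinder. The constant function $z\mapsto\sqrt2$ also solves~(\ref{alpha:eq}) --- both sides vanish, since $\tfrac1{\sqrt2}-\tfrac12\sqrt2=0$ --- and it has the same Cauchy data $\alpha(z_*)=\sqrt2$, $\alpha'(z_*)=0$. Since the right-hand side of~(\ref{alpha:eq}) is smooth near $\alpha=\sqrt2\ne0$, uniqueness for the initial value problem gives $\alpha\equiv\sqrt2$ in a neighborhood of $z_*$, contradicting that $\alpha$ is strictly decreasing. Hence $x_*>\sqrt2$. The main obstacle is exactly the borderline case $x_*=\sqrt2$: when $x_*<\sqrt2$ one gets a contradiction at once, because $\tfrac12 x-\tfrac1x$ is then bounded away from $0$ near $x_*$, so the right-hand side of~(\ref{gamma:ode}) tends to $+\infty$ while the left-hand side stays negative; but at $x_*=\sqrt2$ that coefficient degenerates and one genuinely has to invert to~(\ref{alpha:eq}) and invoke uniqueness against the cylinder $\alpha\equiv\sqrt2$, and there one must be a little careful with the regularity of $\alpha$ up to the endpoint $z_*$.
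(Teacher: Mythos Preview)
Your proof is correct and follows essentially the same route as the paper: show $\gamma$ stays positive and bounded so that the inverse curve $\alpha$ extends to $z_*$ with $\alpha(z_*)=x_*$, $\alpha'(z_*)=0$, and then invoke uniqueness against the cylinder solution $\alpha\equiv\sqrt2$ to rule out $x_*=\sqrt2$. The only organizational difference is that the paper dispatches the strict case $x_*<\sqrt2$ directly from~(\ref{gamma:ode}) (the right-hand side would tend to $+\infty$ while the left stays negative), whereas you fold both subcases into the single inequality $\alpha''(z_*)\le 0$ before isolating $x_*=\sqrt2$; you note the direct argument yourself in your closing remarks, so the two proofs are really the same.
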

\begin{proof}
Suppose $x_* < \sqrt{2}$. Then using~(\ref{gamma:ode}), we see that $\lim_{x \to x_*} \gamma''(x) = \infty$, which contradicts the fact that $\gamma'' < 0$. On the otherhand, the existence of the cylinder self-shrinker prevents $x_*$ from being equal to $\sqrt{2}$. To see this, suppose $x_* = \sqrt{2}$. Since $\gamma'' < 0$, we know that $\gamma(x) > 0$ for $x \in [0,x_*)$, and therefore there exists $z_* \geq 0$ so that $\lim_{x \to x_*}\gamma(x) = z_*$. Near the point $(\sqrt{2}, z_*)$, we write the curve $(x,\gamma(x))$ as $(\alpha(z),z)$, where $\alpha$ satisfies the differential equation~(\ref{alpha:eq}). Now, $\alpha(z_*) = \sqrt{2}$ and $\alpha'(z_*) = 0$, and by the uniqueness of solutions for this differential equation, $\alpha$ must be the constant function $\alpha(z) = \sqrt{2}$ (which corresponds to the cylinder self-shrinker). This contradicts the fact that $(x,\gamma(x))$ agrees with $(\alpha(z),z)$ near $(\sqrt{2}, z_*)$.
\end{proof}

\begin{lemma}
\label{bd:blowup}
$\lim_{x \to x_*} \gamma(x) > -\infty$.
\end{lemma}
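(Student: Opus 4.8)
The plan is to prove the stronger quantitative fact that $\gamma'$ is absolutely integrable on a left-neighborhood of $x_*$; since $\gamma(x)=\gamma(x_1)+\int_{x_1}^x\gamma'$, this forces $\lim_{x\to x_*}\gamma(x)>-\infty$. First I would dispose of the trivial case: if $\gamma>0$ on all of $[0,x_*)$, then $\lim\gamma\ge 0$ and we are done. Otherwise $\gamma$, being decreasing (by $\gamma'(0)=0$ and Claim~\ref{claim:ccd}), is eventually negative, so I fix $x_1\in(\sqrt2,x_*)$ with $\gamma(x_1)<0$; then on $[x_1,x_*)$ we have $\gamma<0$, $\gamma'<0$, $\gamma'\to-\infty$, $\gamma''<0$, and $c(x):=\tfrac x2-\tfrac1x>0$. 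The guiding heuristic is that, since $\gamma'\to-\infty$, the dominant balance in~(\ref{gamma:ode}) near $x_*$ is $\gamma''/(\gamma')^2\approx c(x)\gamma'$, and the model equation $y''/(1+(y')^2)=c_*y'$ has solutions with $y'\sim-(2c_*(x_*-x))^{-1/2}$, which is integrable. The task is to turn this into a rigorous estimate $|\gamma'(x)|\le C(x_*-x)^{-1/2}$.

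To do this I would introduce the two quantities $w:=(\gamma')^{-2}$, so that $w>0$ and $w\to 0$ as $x\to x_*$, and $h:=\gamma/\gamma'>0$. A direct differentiation of~(\ref{gamma:ode}) yields the identities
\begin{equation*}
w' = -(2c-h)(w+1),\qquad (2c-h)' = \frac{2}{x^2}+\frac{h(2c-h)(1+w)}{2w}.
\end{equation*}
Writing $k:=2c-h$, the second identity shows that $k'\ge 2/x^2>0$ at every point where $k\ge 0$ (in particular $k$ can only cross zero upward, so once nonnegative it stays nonnegative), while the first shows that if $k<0$ on a whole terminal interval $[x_2,x_*)$ then $w'>0$ there, which contradicts $w\to 0^+$. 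Hence $k\ge 0$ on some terminal interval $[\sigma,x_*)$, on which $k$ is strictly increasing; fixing $x_4\in(\sigma,x_*)$ gives $k(x)\ge\eta:=k(x_4)>0$ for all $x\in[x_4,x_*)$. Then $w'=-k(w+1)\le-\eta$ on $[x_4,x_*)$, and integrating from $x$ to $x_*$ (where $w\to 0$) yields $w(x)\ge\eta(x_*-x)$, i.e. $|\gamma'(x)|\le(\eta(x_*-x))^{-1/2}$. This bound is integrable on $[x_4,x_*)$, so $\gamma$ converges to a finite limit $z_*$ as $x\to x_*$, which is exactly the assertion of the lemma.

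The main obstacle is the apparent circularity: to bound $\gamma$ from below near the blow-up one needs to know that $|\gamma|$ does not outgrow $|\gamma'|$ — equivalently that $h=\gamma/\gamma'$ stays bounded, equivalently that the $-\tfrac12\gamma$ term in~(\ref{gamma:ode}) is genuinely lower order than $c\gamma'$ — but that is essentially the conclusion itself. The role of the substitution $w=(\gamma')^{-2}$, $k=2c-h$ is to convert this into a pure sign question: the evolution equation for $k$ \emph{forces} $k$ to be positive near $x_*$, because a negative $k$ would make $w$ increasing, which is incompatible with $\gamma'\to-\infty$. Once $k$ is bounded below by a positive constant, the decay $w\gtrsim(x_*-x)$, and hence the integrability of $\gamma'$, is immediate. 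I expect the only nonroutine step to be the verification of the two displayed identities, and that is a mechanical computation from~(\ref{gamma:ode}).
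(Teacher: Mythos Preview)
Your proof is correct, and the two displayed identities do check out: writing the ODE as $\gamma''=(1+(\gamma')^2)\tfrac{k}{2}\gamma'$ with $k=2c-h$, one gets $w'=-2(\gamma')^{-3}\gamma''=-(w+1)k$ and $h'=1-\tfrac{\gamma\gamma''}{(\gamma')^2}=1-\tfrac{hk(1+w)}{2w}$, whence $k'=(2c)'-h'=\tfrac{2}{x^2}+\tfrac{hk(1+w)}{2w}$ as you claim. The sign logic is also sound: on the region where $\gamma<0$, $\gamma'<0$ one has $h>0$, $w>0$, so $k'>0$ whenever $k\ge 0$, meaning $k$ cannot return to zero from above; and $k<0$ on a terminal interval would force $w$ to increase, contradicting $w\to 0$. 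Your case split ($\gamma>0$ throughout versus eventually $\gamma<0$) is needed precisely because the positivity of $h$ enters the argument for $k'>0$.

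Your route is genuinely different from the paper's. The paper differentiates the equation once more to obtain, for $f=-\gamma'$, the differential inequality $f''\ge c(x)\,f^2 f'$ on $x\ge\sqrt2$ (this uses equation~(\ref{gamma:eq:3})), then constructs an explicit barrier $g_\varepsilon(x)=M/\sqrt{(x_*-\varepsilon)-x}$ satisfying the reverse inequality, applies a maximum principle to get $f\le g_\varepsilon$, and sends $\varepsilon\to 0$ to obtain $|\gamma'|\le M(x_*-x)^{-1/2}$. Both arguments land on the same integrable rate, but the mechanisms differ: the paper guesses the correct barrier and verifies it via a second-order comparison, whereas you avoid the third-derivative equation and any auxiliary comparison function by passing to the variables $(w,k)$ in which the blow-up condition $w\to 0$ \emph{forces} the sign of $k$, after which the rate $w\gtrsim x_*-x$ is a one-line integration. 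Your approach is arguably cleaner and more self-contained; the paper's has the minor advantage that it does not need to separate the case $\gamma>0$ near $x_*$.
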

\begin{proof}
Let $\gamma$ be a solution of~(\ref{gamma:ode}) with $\gamma(0) = b > 0$ and $\gamma'(0)=0$. Let $x_* > \sqrt{2}$ be the point where $\gamma$ blows-up. Fix $0< \delta < 1$ so that $x_* - \delta > \sqrt{2}$, and let $m>0$ be such that $(\frac{1}{2}x - \frac{1}{x}) \geq m$ when $x \in (x_* - \delta, x_*)$. Choose $M>0$ so that $m \geq \frac{3}{2M^2}$ and $M \geq -\gamma'(x_* - \delta)$.

For $\varepsilon>0$, define $g_\varepsilon(x)$ on $(x_* - \delta, x_* - \varepsilon)$ by $$g_\varepsilon(x) = \frac{M}{\sqrt{(x_* - \varepsilon) - x}}.$$ Then $$g_\varepsilon ''(x) = \frac{3}{2M^2} g_\varepsilon(x)^2 g_\varepsilon'(x) \leq \left( \frac{1}{2}x - \frac{1}{x} \right) g_\varepsilon(x)^2 g_\varepsilon'(x) ,$$ for $x \in (x_* - \delta, x_* - \varepsilon)$. We will use the function $g_\varepsilon$ to show that $-\gamma'$ blows-up no faster than $M/\sqrt{x_*-x}$. Let $f(x) = -\gamma'(x)$. Then $f \geq 0$, $f' > 0$, and by~(\ref{gamma:eq:3}), $$f''(x) \geq \left(\frac{1}{2}x - \frac{1}{x} \right)f(x)^2 f'(x),$$  when $x \geq \sqrt{2}$. We will show $f \leq g_\varepsilon$. By construction, $$f(x_*-\delta) \leq M < g_\varepsilon(x_*-\delta)$$ and $$f(x_*-\varepsilon) < \lim_{x \to (x_* - \varepsilon)} g_\varepsilon(x).$$ Therefore, if $f>g_\varepsilon$ at some point, then $f-g_\varepsilon$ achieves a positive maximum at some point $x' \in (x_* - \delta, x_* - \varepsilon)$. This leads to $(f-g_\varepsilon)'(x') = 0$ and $(f-g_\varepsilon)''(x') \leq 0$. Consequently, $$0 \geq (f-g_\varepsilon)''(x') \geq \left(\frac{1}{2}x' - \frac{1}{x'} \right)f'(x') \left( f(x')^2-g_\varepsilon(x')^2 \right) > 0,$$ which is a contradiction.

It follows that $f \leq g_\varepsilon$ on $(x_* - \delta, x_* - \varepsilon)$. Taking $\varepsilon \to 0$, we conclude that $$\gamma'(x) \geq \frac{-M}{\sqrt{x_* -x}},$$ for $x \in (x_* - \delta, x_*)$. Therefore, $\lim_{x \to x_*}\gamma(x) > -\infty$.
\end{proof}

\begin{remark}
At this point, we can give a basic description of the $\gamma$ curves: For $b>0$, let $\gamma_b$ denote the solution of~(\ref{gamma:ode}) with $\gamma_b(0) = b$ and $\gamma_b'(0) = 0$. Then $\gamma_b$ is decreasing and concave down, and there exists a point $x_*^b \in (\sqrt{2} , \infty)$ so that $\gamma_b$ is defined on $[0,x_*)$ and $\lim_{x \to x_*^b} \gamma_b'(x) = -\infty$. There also exists a point $z_*^b \in (-\infty, b)$ so that $\gamma_b(x_*^b) = z_*^b$.
\end{remark}

\subsection{Estimates for small initial height}

In this section, we prove estimates for $x_*^b$ and $z_*^b$ when the initial height $b>0$ is small.
\begin{proposition}
\label{gamma:small:height}
For $b>0$, let $\gamma_b$ denote the solution of~\emph{(\ref{gamma:ode})} with $\gamma_b(0) = b$ and $\gamma_b'(0) = 0$. Let $x^{b}_*$ denote the point where $\gamma_{b}$ blows-up, and let $z^b_* = \gamma_b(x^b_*)$.

There exists $\bar{b} > 0$ so that if $b \in (0,\bar{b}]$, then $$x_*^b \geq \sqrt{ \log{\frac{2}{\pi b^2}} },$$ $$\frac{-12}{ \sqrt{\log{\frac{2}{\pi b^2}}} } \leq z_*^b < 0,$$ and there exists a point $x_0^b \in [2, 2\sqrt{2}]$ so that $\gamma_b(x_0^b) = 0$.
\end{proposition}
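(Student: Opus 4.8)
The plan is to treat the three assertions largely separately; the inequality $z_*^b<0$ will come for free once the other two are in hand, since the lower bound on $x_*^b$ has right-hand side tending to $\infty$ as $b\to0$ (so $x_*^b>2\sqrt2$ for small $b$), and then $z_*^b=\lim_{x\to x_*^b}\gamma_b(x)\le\gamma_b(x)<0$ for any $x\in(x_0^b,x_*^b)$ by strict monotonicity of $\gamma_b$. So the real work is: (a) the lower bound for $x_*^b$, along with a matching upper bound $x_*^b\le C\sqrt{\log\frac2{\pi b^2}}$ that will be needed later; (b) the zero of $\gamma_b$ lying in $[2,2\sqrt2]$; and (c) the lower bound for $z_*^b$.

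For (a) I would set $f=-\gamma_b'\ge0$ and $\theta=\arctan f\in[0,\pi/2)$, so that (by Claim~\ref{claim:ccd}) $\theta$ is nondecreasing and $\gamma_b$ blows up exactly when $\theta$ first reaches $\pi/2$. Reading~(\ref{gamma:ode}) off in these variables and using $\gamma_b\le b$, $f\ge0$ gives $\theta'=(\tfrac12x-\tfrac1x)f+\tfrac12\gamma_b\le(\tfrac12x-\tfrac1x)f+\tfrac b2$, with the first term $\le0$ for $x\le\sqrt2$. As long as $\theta$ stays below a small threshold $\delta_0$ one has $f=\tan\theta\le(1+\delta_0)\theta$, so on $[\sqrt2,\cdot]$ this becomes a linear differential inequality $\theta'\le cx\,\theta+\tfrac b2$; integrating it against a Gaussian factor $e^{-\kappa x^2}$ (with $\kappa<\tfrac12$) and using $\int_0^\infty e^{-\kappa s^2}\,ds<\infty$ yields a barrier $\theta(x)\le Cb\,e^{\kappa x^2}$. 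Since $Cb\,e^{\kappa\log\frac2{\pi b^2}}=C(\tfrac2\pi)^\kappa b^{\,1-2\kappa}\to0$, this barrier is below $\delta_0$ at $x=\sqrt{\log\frac2{\pi b^2}}$ for $b$ small, and a bootstrap then gives $x_*^b\ge\sqrt{\log\frac2{\pi b^2}}$; the same barrier shows $|\gamma_b|$ is extremely small on $[0,x_*^b-1]$ (taking $\delta_0=\delta_0(b)\to0$ slowly, one gets $\int_{x_0^b}^{x_*^b-1}|\gamma_b|\,dx=o(1)$). For the upper bound: once $\theta$ does exceed $\delta_0$, which by the barrier cannot happen before an $x$ comparable to $\sqrt{\log\frac1b}$, the term $(\tfrac12x-\tfrac1x)f$ in $\theta'$ dominates with a coefficient of size $\gtrsim x$, so $(\ln\sin\theta)'\gtrsim x$ and $\theta$ is driven up to $\pi/2$ within a distance $o(1)$; this gives $x_*^b\le C\sqrt{\log\frac2{\pi b^2}}$.

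For (b) I would rescale, setting $\psi_b=\gamma_b/b$, so that $\psi_b''/(1+b^2(\psi_b')^2)=(\tfrac12x-\tfrac1x)\psi_b'-\tfrac12\psi_b$, $\psi_b(0)=1$, $\psi_b'(0)=0$. Because $1+b^2(\psi_b')^2\to1$ uniformly wherever $\psi_b'$ is bounded — and the barrier from (a) provides such a bound on $[0,2\sqrt2]$ for $b$ small, since $x_*^b\to\infty$ — Proposition~\ref{ps:prop} near $x=0$ together with continuous dependence away from $x=0$ shows $\psi_b\to\psi_0$ in $C^1$ on $[0,2\sqrt2]$, where $\psi_0$ solves the linearized equation. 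The concavity argument of Claim~\ref{claim:ccd} applies verbatim to $\psi_0$ and shows it is decreasing, concave down, and must cross the $x$-axis; comparing with $1-\tfrac14x^2$ on $[0,\sqrt2]$ and with solutions of $v''=-\tfrac12v$ beyond (using $\psi_0''<-\tfrac12\psi_0$ where $\psi_0>0$) pins the first zero of $\psi_0$ to $[2,2\sqrt2]$, and hence $\gamma_b(2)=b\psi_b(2)>0>b\psi_b(2\sqrt2)=\gamma_b(2\sqrt2)$ for $b$ small, giving $x_0^b\in(2,2\sqrt2)$. For (c) I would integrate the identity $(\arctan\gamma_b')'=(\tfrac12x-\tfrac1x)\gamma_b'-\tfrac12\gamma_b$ over $[x_0^b,x_*^b]$ and integrate the first right-hand term by parts; using $\gamma_b(x_0^b)=0$ and $\arctan\gamma_b'(x_*^b)=-\tfrac\pi2$ this produces
\[
|z_*^b|\Bigl(\tfrac12 x_*^b-\tfrac1{x_*^b}\Bigr)=\tfrac\pi2-\arctan\bigl(-\gamma_b'(x_0^b)\bigr)+\int_{x_0^b}^{x_*^b}\Bigl(1+\tfrac1{x^2}\Bigr)|\gamma_b|\,dx.
\]
Since $x\ge 2$ on the interval, $1+\tfrac1{x^2}\le\tfrac54$, and by (a) the integral is at most $\tfrac54\bigl(o(1)+|z_*^b|\bigr)$ (the piece over $[x_0^b,x_*^b-1]$ is $o(1)$ and the piece over $[x_*^b-1,x_*^b]$ is $\le|z_*^b|$). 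Using $\tfrac12 x_*^b-\tfrac1{x_*^b}\ge\tfrac12\sqrt{\log\frac2{\pi b^2}}-o(1)$ and solving for $|z_*^b|$ then gives $|z_*^b|\le 12/\sqrt{\log\frac2{\pi b^2}}$ once $b$ is small enough.

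I expect the main obstacle to be the quantitative analysis near the blow-up point: building barriers sharp enough that the bootstrap in (a) closes at exactly $x=\sqrt{\log\frac2{\pi b^2}}$, obtaining the matching two-sided estimate $x_*^b\asymp\sqrt{\log\frac2{\pi b^2}}$ needed to keep $\int|\gamma_b|$ small in (c), and managing every constant so that the specific value $12$ (and the window $[2,2\sqrt2]$) emerge. A minor additional point is the mild self-reference in the bound for $z_*^b$ — the sign and the size of $\gamma_b$ on $(x_0^b,x_*^b)$ both feed into the estimate — which is cleanly dispatched either by letting the bootstrap threshold $\delta_0(b)\to0$ with $b$ or by a ``first time $-\gamma_b$ hits the target value'' continuity argument. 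Choosing $\bar b$ small enough for all the smallness requirements to hold at once finishes the proof.
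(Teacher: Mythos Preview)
Your route is genuinely different from the paper's at every step, and the real problem is in (c), where the circularity you call ``minor'' is in fact the crux.

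The paper never uses your integration-by-parts identity. Its engine for the $z_*^b$ bound is Lemma~\ref{c4:fb}: the pointwise comparison $\gamma(x) > \tfrac{8}{x}\gamma'(x)$ on $[x_0^b,x_*^b)$, proved by a maximum-principle argument for $\Phi(x)=\tfrac18 x\gamma-\gamma'$. That single inequality does all the work: it gives $|\gamma(x_1)|<8/x_1$ at the point $x_1$ where $\gamma'=-1$; substituted back into the ODE it yields $\gamma''\le \tfrac14 x(\gamma')^3$ on $[x_1,x_*^b)$, whose first integral gives $(\gamma')^{-2}\ge\tfrac14\bigl((x_*^b)^2-x^2\bigr)$ (in particular $(x_*^b)^2-x_1^2\le 4$); and one more integration gives $z_*^b\ge -12/x_1$ directly.

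Your argument needs precisely this kind of a priori control and does not supply it. For the upper bound on $x_*^b$ you claim $(\ln\sin\theta)'\gtrsim x$ once $\theta\ge\delta_0$; but one computes
\[
(\ln\sin\theta)'=\Bigl(\tfrac12 x-\tfrac1x\Bigr)-\frac{\gamma}{2\gamma'},
\]
and on $(x_0^b,x_*^b)$ the last term is strictly negative (both $\gamma,\gamma'<0$). Getting $(\ln\sin\theta)'\ge cx$ for some $c>0$ requires $\gamma/\gamma'=O(1/x)$ --- exactly Lemma~\ref{c4:fb}. The only bound available from concavity is $\gamma/\gamma'<x-x_0^b$, which gives merely $(\ln\sin\theta)'>-1/x$ and allows $x_*^b-x_\delta$ to be large. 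Without the upper bound, the portion of $\int(1+\tfrac1{x^2})|\gamma_b|$ lying beyond your $\theta$-barrier cannot be controlled as $o(1)+O(|z_*^b|)$: on that stretch $|\gamma_b|$ is only bounded by $|z_*^b|$ (the unknown), and the length is not known to be $O(1)$. Neither of your proposed fixes (shrinking $\delta_0(b)$ or a first-hitting argument) produces the missing control of $\gamma/\gamma'$.

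For (a) and (b): your $\theta$-barrier is a legitimate alternative to the paper's integrating-factor computation with $e^{-x^2/2}$ (Claim~\ref{c3:fb} and its refinement), though the paper's trick is what makes the specific constant $\sqrt{\log\frac{2}{\pi b^2}}$ drop out. For (b) the paper does not pass to the linearized limit; it argues directly that $\gamma'''<0$ (Claim~\ref{c1:fb}) gives $\gamma<b(1-\tfrac18 x^2)$ and hence $x_0^b<2\sqrt2$, and obtains $x_0^b\ge 2$ from the divergence form~(\ref{gamma:div:form}) together with the monotonicity of $\frac{x\gamma'-\gamma}{\sqrt{1+(\gamma')^2}}$ (Claim~\ref{c2:fb}). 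Your limiting approach could be made to work, but the Sturm-type comparisons you sketch do not by themselves pin the zero of $\psi_0$ to $(2,2\sqrt2)$; that still needs an argument.
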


Before we prove the proposition, we prove some results about solutions of~(\ref{gamma:ode}) when the initial height is small. Let $\gamma$ be the solution of~(\ref{gamma:ode}) with $\gamma(0) = b$ and $\gamma'(0) = 0$.

\begin{claim}
\label{c3:fb}
If $b < \sqrt{ \frac{2}{3 \pi} } \cdot \frac{1}{e^4}$, then $x_* > 2 \sqrt{2}$ and $|\gamma'(x)| \leq \frac{\sqrt{3}}{3}$ for $x \in [0,2\sqrt{2}]$.
\end{claim}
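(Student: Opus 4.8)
The plan is to bound $|\gamma'|$ on the interval $[0,2\sqrt 2]$ by a continuity (or "first time it exceeds") argument, and then deduce $x_* > 2\sqrt 2$ as an immediate byproduct of the fact that a solution cannot blow up while both $|\gamma|$ and $|\gamma'|$ stay bounded. Concretely, suppose for contradiction that there is a first point $\bar x \in (0, 2\sqrt 2]$ at which $|\gamma'(\bar x)| = \tfrac{\sqrt 3}{3}$; by Claim~\ref{claim:ccd} we know $\gamma$ is decreasing and concave down, so in fact $\gamma'(\bar x) = -\tfrac{\sqrt 3}{3}$ and $-\tfrac{\sqrt 3}{3} < \gamma'(x) \le 0$ on $[0,\bar x)$. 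On this interval $1 + (\gamma')^2 \le \tfrac 4 3$, so the left side of~(\ref{gamma:ode}) controls $|\gamma''|$ up to a bounded factor.

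The key estimate is on $\gamma$ itself. Since $\gamma$ is decreasing and $\gamma(0) = b$, and since $\gamma' \ge -\tfrac{\sqrt3}{3}$ on $[0,\bar x]$, we get the crude two-sided bound $b - \tfrac{\sqrt3}{3}\,x \le \gamma(x) \le b$; but the useful direction is an upper bound on $|\gamma(x)|$ that lets us feed~(\ref{gamma:ode}) back into an integral inequality for $-\gamma'$. Write $f = -\gamma' \ge 0$. From~(\ref{gamma:ode}), $f' = \big(1+f^2\big)\big[(\tfrac1x - \tfrac x2) f + \tfrac12 \gamma\big]$. On $[0,\bar x]$ we have $0 \le f \le \tfrac{\sqrt3}{3}$, so $1+f^2 \le \tfrac43$; the term $\tfrac1x f$ is bounded on any interval bounded away from $0$, but near $0$ we need the fact (from Proposition~\ref{ps:prop} and $\gamma'(0)=0$, $\gamma''(0)=-b/4$) that $f(x) = \tfrac{b}{4}x + O(x^2)$, so $\tfrac1x f$ stays bounded — in fact comparable to $b$ — as $x \to 0$. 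The plan is to compare $\gamma$ against the linear self-shrinker-free model and against the explicit supersolution coming from dropping the $\tfrac1x$ term: for $x \ge \sqrt 2$ the coefficient $\tfrac x2 - \tfrac1x$ is positive, which makes $f' \le \tfrac43\big[-(\tfrac x2-\tfrac1x)f + \tfrac12\gamma\big]$, i.e. $f$ obeys a linear ODE with a damping term and forcing $\lesssim b$; Grönwall then gives $f(x) \lesssim b\cdot(\text{const})$ on $[\sqrt2, 2\sqrt2]$. For $x \in [0,\sqrt2]$ one uses instead the concavity bound $f(x) \le f(\sqrt2)$ together with the rough estimate $f' \le \tfrac43(\tfrac1x f + \tfrac b2)$ and the $O(x)$ behavior at the origin to propagate a bound of size $C b\, e^{C}$ up to $x = \sqrt 2$. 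Choosing the explicit constant so that $Cb\,e^C < \tfrac{\sqrt3}{3}$ exactly when $b < \sqrt{\tfrac{2}{3\pi}}\cdot e^{-4}$ — which is where the numbers $\tfrac23$, $\pi$ and $e^4$ in the statement come from — contradicts $f(\bar x) = \tfrac{\sqrt3}{3}$.

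The main obstacle will be handling the singular term $\tfrac1x \gamma'$ near $x = 0$ cleanly enough to keep the constants explicit: one cannot simply Grönwall through $x=0$ because the coefficient $\tfrac1x$ is not integrable, and one must instead exploit the cancellation $\gamma'(x)/x \to \gamma''(0) = -b/4$ and the analyticity from Proposition~\ref{ps:prop} to control $f(x)/x$ on a small fixed interval $[0,1/A]$, then continue with ordinary Grönwall on $[1/A, 2\sqrt2]$. A secondary point requiring care is that the bound must be uniform over the relevant range of $b$ (so that $A = A(M)$ from Proposition~\ref{ps:prop} can be chosen once and for all for $b$ in a bounded set), but since we only need $b$ small this is automatic. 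Once $|\gamma'| \le \tfrac{\sqrt3}{3}$ is established on $[0,2\sqrt2]$, equation~(\ref{gamma:ode}) bounds $|\gamma''|$ and hence $\gamma$ extends past $2\sqrt2$ as a solution, forcing $x_* > 2\sqrt2$.
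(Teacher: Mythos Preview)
Your outline does not actually reach the stated constant, and the reason is that you are missing the one clean idea that makes the claim work with exactly $\sqrt{2/(3\pi)}\,e^{-4}$: an integrating factor. The paper does not split $[0,2\sqrt2]$ into subintervals, does not appeal to Gr\"onwall, and does not need any analyticity near $x=0$. Instead, let $x'$ be the first point where $\gamma'(x')=-\tfrac{\sqrt3}{3}$; on $(0,x')$ one has $(\gamma')^2\le 1$, so $\gamma''\ge \tfrac{2\gamma''}{1+(\gamma')^2}$, and substituting the ODE gives
\[
\frac{d}{dx}\Bigl(e^{-x^2/2}\gamma'(x)\Bigr)=e^{-x^2/2}\gamma''-xe^{-x^2/2}\gamma'
\;\ge\;e^{-x^2/2}\Bigl(-\tfrac{2}{x}\gamma'-\gamma\Bigr)\;\ge\;-e^{-x^2/2}\gamma(x),
\]
since $-\tfrac{2}{x}\gamma'\ge 0$. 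Integrating from $0$ to $x'$ and using $\gamma\le b$ yields $\tfrac{\sqrt3}{3}\,e^{-(x')^2/2}\le b\sqrt{\pi/2}$, hence $x'>2\sqrt2$ precisely when $b<\sqrt{2/(3\pi)}\,e^{-4}$. That is where the $\pi$ (Gaussian integral) and the $e^{-4}=e^{-(2\sqrt2)^2/2}$ actually come from.

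Your proposed route has two concrete problems. First, the singular term: you correctly flag that $\tfrac1x$ is not integrable at $0$ and propose patching this with the analytic expansion from Proposition~\ref{ps:prop} on $[0,1/A]$. But $A=A(M)$ is not explicit, so the constant you feed into Gr\"onwall on $[1/A,2\sqrt2]$ is not explicit either; you then have no way to recover the precise threshold in the statement. The paper's trick avoids this entirely because after multiplying by $e^{-x^2/2}$ the singular term appears with a favorable sign and is simply dropped. Second, and more seriously, your assertion that ``choosing the explicit constant so that $Cb\,e^C<\tfrac{\sqrt3}{3}$ exactly when $b<\sqrt{2/(3\pi)}\,e^{-4}$'' is not supported by anything in the argument: a generic Gr\"onwall bound on two subintervals will produce \emph{some} constant, but there is no mechanism in your sketch that forces it to be $\sqrt{3\pi/2}\,e^{4}$. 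At best your approach would prove the claim for some unspecified small $b$, not for the stated explicit range.
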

\begin{proof}
Since $\gamma'(0) = 0$, $\gamma''<0$, and $\lim_{x \to x_*} \gamma'(x) = - \infty$, we know there exists $x' \in (0,x_*)$ so that $\gamma'(x') = -\frac{\sqrt{3}}{3}$. For $x \in (0,x')$, we have
\begin{eqnarray}
\frac{d}{dx} \left( e^{-\frac{x^2}{2}} \gamma'(x) \right) & = &  e^{-\frac{x^2}{2}} \gamma''(x) - x e^{-\frac{x^2}{2}} \gamma'(x)  \nonumber \\
& \geq &  \frac{2}{1 + \gamma'(x)^2} e^{-\frac{x^2}{2}} \gamma''(x) - x e^{-\frac{x^2}{2}} \gamma'(x)  \nonumber \\
& = &  2 e^{-\frac{x^2}{2}} \left[ \left( \frac{1}{2}x - \frac{1}{x} \right) \gamma' -\frac{1}{2} \gamma \right] - x e^{-\frac{x^2}{2}} \gamma'(x)  \nonumber \\
& = &  -2 e^{-\frac{x^2}{2}} \frac{1}{x} \gamma' - e^{-\frac{x^2}{2}} \gamma(x)  \nonumber \\
& \geq & - e^{-\frac{x^2}{2}} \gamma(x). \nonumber
\end{eqnarray}
Integrating from $0$ to $x'$, $$-\frac{\sqrt{3}}{3} e^{-\frac{(x')^2}{2}} \geq - \int_0^{x'} e^{-\frac{x^2}{2}} \gamma(x) dx \geq - b \int_0^{x'} e^{-\frac{x^2}{2}} dx\geq -b \sqrt{\frac{\pi}{2}}.$$ When $b < \sqrt{ \frac{2}{3 \pi} } \cdot \frac{1}{e^4}$ we have $e^{-\frac{(x')^2}{2}} < e^{-4}$, and therefore $x' > 2\sqrt{2}$.
\end{proof}

\begin{claim}
\label{c1:fb}
If $|\gamma'(x)| \leq \frac{\sqrt{3}}{3}$ for $x \in [0,\sqrt{2}]$, then $\gamma'''(x) < 0$ for $x \in (0, x_*)$.
\end{claim}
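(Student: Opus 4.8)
The plan is to establish $\gamma'''<0$ separately on $[\sqrt{2},x_*)$ and on $(0,\sqrt{2})$. On $[\sqrt{2},x_*)$ the inequality is immediate from signs: since $\gamma'(0)=0$ and $\gamma''<0$ on $(0,x_*)$ by Claim~\ref{claim:ccd}, we have $\gamma'<0$ on $(0,x_*)$, and $\tfrac12 x-\tfrac1x\ge0$ for $x\ge\sqrt{2}$, so in~(\ref{gamma:eq:3}) the term $(\tfrac12 x-\tfrac1x)\gamma''$ is $\le0$ while $\tfrac{2\gamma'(\gamma'')^2}{(1+(\gamma')^2)^2}$ and $\tfrac1{x^2}\gamma'$ are $<0$; multiplying by $1+(\gamma')^2>0$ gives $\gamma'''<0$. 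For $x$ near $0$ I would use the analyticity from Proposition~\ref{ps:prop}: matching Taylor coefficients of $\gamma(x)=b+a_2x^2+a_3x^3+a_4x^4+\cdots$ in~(\ref{gamma:ode}) yields $a_2=-b/8$, $a_3=0$, and $a_4=-\tfrac{b}{256}-\tfrac{b^3}{1024}$, so $\gamma'''(x)=24a_4x+O(x^2)$ with $24a_4=-\tfrac{3b}{32}-\tfrac{3b^3}{128}<0$ (here $b>0$). Hence $\gamma'''<0$ on some interval $(0,\varepsilon)$.

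With these two facts the heart of the proof is a first-crossing argument on $(0,\sqrt{2})$. Suppose $\gamma'''$ is not negative on all of $(0,x_*)$; then any point where $\gamma'''\ge0$ must lie in $(0,\sqrt{2})$, and $\gamma'''<0$ near $0$, so letting $\bar{x}$ be the infimum of the set $\{x\in(0,\sqrt{2}):\gamma'''(x)\ge0\}$ we obtain $\bar{x}\in(0,\sqrt{2})$, $\gamma'''(\bar{x})=0$, $\gamma'''<0$ on $(0,\bar{x})$, and therefore $\gamma^{(iv)}(\bar{x})\ge0$. Write $p=\gamma'(\bar{x})<0$, $q=\gamma''(\bar{x})<0$, $w=1+p^2$. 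Evaluating~(\ref{gamma:eq:3}) at $\bar{x}$ with $\gamma'''(\bar{x})=0$ gives
\[
\frac{2pq^2}{w^2}+\Bigl(\frac{\bar{x}}{2}-\frac1{\bar{x}}\Bigr)q+\frac{p}{\bar{x}^2}=0,
\]
and evaluating~(\ref{gamma:eq:4}) at $\bar{x}$ with $\gamma'''(\bar{x})=0$ and $\gamma^{(iv)}(\bar{x})\ge0$ gives
\[
\frac{2q^3}{w^2}-\frac{8p^2q^3}{w^3}+\Bigl(\frac12+\frac2{\bar{x}^2}\Bigr)q-\frac{2p}{\bar{x}^3}\ge0.
\]

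Solving the identity for $-\tfrac{2p}{\bar{x}^3}=\tfrac{4pq^2}{\bar{x}w^2}+\bigl(1-\tfrac2{\bar{x}^2}\bigr)q$ and substituting into the inequality, the $\tfrac2{\bar{x}^2}$-terms cancel; using $\tfrac{2q^3}{w^2}-\tfrac{8p^2q^3}{w^3}=\tfrac{2q^3(1-3p^2)}{w^3}$ one is left with $\tfrac{2q^3(1-3p^2)}{w^3}+\tfrac{4pq^2}{\bar{x}w^2}+\tfrac32q\ge0$. Dividing by $q<0$ reverses this to
\[
\frac{2q^2(1-3p^2)}{w^3}+\frac{4pq}{\bar{x}w^2}+\frac32\le0.
\]
This is the contradiction: the hypothesis $|\gamma'|\le\tfrac{\sqrt{3}}{3}$ on $[0,\sqrt{2}]$ forces $p^2\le\tfrac13$, so $1-3p^2\ge0$ and the first term is $\ge0$; since $p<0$ and $q<0$ the middle term $\tfrac{4pq}{\bar{x}w^2}$ is $>0$; and $\tfrac32>0$. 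Hence the left side is $\ge\tfrac32$, which is impossible. Thus $\gamma'''<0$ on $(0,\sqrt{2})$ as well.

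The hypothesis enters only through the sign of $1-3p^2$, which is exactly why the constant $\tfrac{\sqrt{3}}{3}$ appears. The steps I expect to require the most care are confirming that a first zero $\bar{x}$ of $\gamma'''$ exists and lies in $(0,\sqrt{2})$ (this uses the two boundary regions, plus the short power-series computation at $x=0$), and carrying out the algebraic cancellation that reduces the two relations at $\bar{x}$ to the single clean inequality above, where one must verify that every surviving term genuinely has the bad sign. The rest is routine sign-chasing.
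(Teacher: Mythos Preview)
Your proof is correct and follows essentially the same route as the paper: establish $\gamma'''<0$ for $x\ge\sqrt{2}$ by sign-chasing in~(\ref{gamma:eq:3}), use the power series at $0$ to get $\gamma'''<0$ near $0$, then run a first-crossing argument at $\bar{x}\in(0,\sqrt{2})$ and extract a sign contradiction from~(\ref{gamma:eq:4}) via $1-3(\gamma'(\bar{x}))^2\ge 0$. The only cosmetic difference is that the paper groups the last two terms of~(\ref{gamma:eq:4}) as $2(\bar{x}\gamma''(\bar{x})-\gamma'(\bar{x}))/\bar{x}^3$ and observes this quantity is negative (since $x\gamma''-\gamma'$ has derivative $x\gamma'''<0$ on $(0,\bar{x})$ and vanishes at $0$), whereas you substitute for $-2p/\bar{x}^3$ using~(\ref{gamma:eq:3}); the two reorganizations are algebraically equivalent.
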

\begin{proof}
From the power series expansion for $\gamma$ at $x=0$, we know that $\gamma'''(0) = 0$ and $\gamma^{(iv)}(0) < 0$. Therefore, $\gamma'''(x) < 0$ when $x > 0$ is near 0. Also, using~(\ref{gamma:eq:3}), we see that $\gamma'''(x) < 0$ when $x \geq \sqrt{2}$. Suppose $\gamma'''(x) = 0$ for some $x>0$. Then there exists $\bar{x} \in (0,\sqrt{2})$ so that $\gamma'''(\bar{x}) = 0$ and $\gamma'''(x) < 0$ for $x \in (0,\bar{x})$. It follows that $\gamma^{(iv)}(\bar{x}) \geq 0$. Notice that $x \gamma''(x) -\gamma'(x)$ is decreasing and hence negative on $(0,\bar{x})$. Then, using~(\ref{gamma:eq:4}) and the assumption that $|\gamma'(\bar{x})| \leq \frac{\sqrt{3}}{3}$, we see that $$\frac{\gamma^{(iv)}(\bar{x})}{1+\gamma'(\bar{x})^2} = 2 (\gamma''(\bar{x}))^3 \frac{1 - 3 (\gamma'(\bar{x}))^2}{(1+\gamma'(\bar{x})^2)^3} + \frac{1}{2} \gamma''(\bar{x}) + 2 \frac{\bar{x} \gamma''(\bar{x}) - \gamma'(\bar{x})}{(\bar{x})^3} < 0,$$ which is a contradiction.
\end{proof}

\begin{claim}
\label{c2:fb}
If $|\gamma'(x)| \leq \frac{\sqrt{3}}{3}$ for $x \in [0,2\sqrt{2}]$ and $b < \frac{1}{2}$, then $\frac{x \gamma'(x) - \gamma(x)}{\sqrt{1 + \gamma'(x)^2}}$ is non-increasing on $[0,2\sqrt{2}]$.
\end{claim}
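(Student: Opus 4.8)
\emph{Proof proposal.} The plan is to differentiate $h(x) := \dfrac{x\gamma'(x)-\gamma(x)}{\sqrt{1+\gamma'(x)^2}}$ directly. Using $\frac{d}{dx}\bigl(x\gamma'-\gamma\bigr)=x\gamma''$ and the chain rule on the denominator, one computes
\[
h'(x) = \frac{\gamma''(x)\,\bigl(x+\gamma(x)\gamma'(x)\bigr)}{\bigl(1+\gamma'(x)^2\bigr)^{3/2}}.
\]
Since $\gamma''<0$ on $(0,x_*)$ by Claim~\ref{claim:ccd}, it suffices to show that $\phi(x):=x+\gamma(x)\gamma'(x)$ is non-negative on $[0,2\sqrt2]$. (Geometrically $\phi=\tfrac12\frac{d}{dx}(x^2+\gamma^2)$, so this just says the squared distance to the origin is non-decreasing along the curve on this interval.)

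I would prove $\phi\ge0$ by a first-touching argument. We have $\phi(0)=0$, and from $\phi'=1+(\gamma')^2+\gamma\gamma''$ together with $\gamma'(0)=0$, $\gamma(0)=b$, $\gamma''(0)=-b/4$ we get $\phi'(0)=1-b^2/4>0$ because $b<\tfrac12$; hence $\phi>0$ on a punctured right-neighborhood of $0$. Suppose, for contradiction, that $\phi$ is negative somewhere in $(0,2\sqrt2]$, and let $\bar x\in(0,2\sqrt2]$ be the smallest point with $\phi(\bar x)=0$ and $\phi\ge0$ on $[0,\bar x]$; then $\phi'(\bar x)\le0$. Substituting $\gamma''$ from~(\ref{gamma:ode}) into $\phi'$ gives
\[
\phi'(x) = \bigl(1+\gamma'(x)^2\bigr)\Bigl[\,1-\tfrac12\gamma(x)^2+\bigl(\tfrac12 x-\tfrac1x\bigr)\gamma(x)\gamma'(x)\,\Bigr].
\]
At $\bar x$ the relation $\phi(\bar x)=0$ forces $\gamma(\bar x)\gamma'(\bar x)=-\bar x$, and the bracket collapses to $2-\tfrac12\bigl(\bar x^2+\gamma(\bar x)^2\bigr)$; so $\phi'(\bar x)\le0$ yields $\bar x^2+\gamma(\bar x)^2\ge4$.

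Finally I would contradict this using the size bounds in force on $[0,2\sqrt2]$. From $\gamma(\bar x)\gamma'(\bar x)=-\bar x<0$ and $\gamma'<0$ we obtain $\gamma(\bar x)>0$, and since $\gamma$ is decreasing with $\gamma(0)=b<\tfrac12$, in fact $0<\gamma(\bar x)<\tfrac12$. Combining $\bar x=|\gamma(\bar x)|\,|\gamma'(\bar x)|$ with the hypothesis $|\gamma'|\le\tfrac{\sqrt3}{3}$ on $[0,2\sqrt2]$ gives $\bar x<\tfrac12\cdot\tfrac{\sqrt3}{3}<1$, hence $\bar x^2+\gamma(\bar x)^2<1+\tfrac14<4$, a contradiction. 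Therefore $\phi\ge0$ on $[0,2\sqrt2]$, and consequently $h$ is non-increasing there.

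The computations are routine; the only point requiring care is the bookkeeping at the first-touching point $\bar x$ — checking that $\gamma(\bar x)\neq0$ (so that $\phi(\bar x)=0$ really determines $\gamma(\bar x)\gamma'(\bar x)$) and that $\phi'(\bar x)\le0$ genuinely follows from $\phi\ge0$ on $[0,\bar x]$ with $\phi(\bar x)=0$. I do not anticipate a real obstacle, since the a priori facts needed ($\gamma''<0$, $\gamma$ decreasing, $|\gamma'|\le\sqrt3/3$, $b<\tfrac12$) are all already available.
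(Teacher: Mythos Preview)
Your proof is correct, and the opening reduction is identical to the paper's: compute $h'(x)=\gamma''(x)\,(x+\gamma\gamma')/(1+(\gamma')^2)^{3/2}$ and reduce to showing $\phi(x)=x+\gamma(x)\gamma'(x)\ge 0$ on $[0,2\sqrt2]$.

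From that point on the two arguments diverge. The paper proves the stronger statement $\phi'\ge 0$ directly: it bounds $\gamma''$ from below by plugging the hypotheses $|\gamma'|\le\sqrt3/3$, $x\le 2\sqrt2$, $|\gamma|\le b<\tfrac12$ into the ODE to get $\gamma''\ge -2$, whence $1+\gamma\gamma''+(\gamma')^2\ge 1-2\cdot\tfrac12=0$. Your approach instead runs a first-touching argument: at a hypothetical first zero $\bar x>0$ you use $\gamma\gamma'=-\bar x$ to collapse $\phi'(\bar x)$ to $(1+(\gamma')^2)\bigl(2-\tfrac12(\bar x^2+\gamma^2)\bigr)$, and then contradict $\bar x^2+\gamma(\bar x)^2\ge 4$ by the size bounds. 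Both are valid; the paper's route is a shade shorter and yields the bonus that $\phi$ is actually non-decreasing, while yours has the pleasant geometric interpretation you note (the curve is not moving inward toward the origin) and isolates exactly the information $\gamma\gamma'=-\bar x$ carries.
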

\begin{proof}
Looking at the derivative of $\frac{x \gamma'(x) - \gamma(x)}{\sqrt{1 + \gamma'(x)^2}}$: $$ \frac{d}{dx} \left( \frac{x \gamma'(x) - \gamma(x)}{\sqrt{1 + \gamma'(x)^2}} \right) = \gamma''(x) \frac{x + \gamma(x) \gamma'(x)}{(1 + \gamma'(x)^2)^{3/2}},$$ we see that it is enough to show $x + \gamma(x) \gamma'(x) \geq 0$. Since $x + \gamma(x) \gamma'(x)$ equals 0 when $x=0$, it is sufficient to show $1 + \gamma(x) \gamma''(x) + \gamma'(x)^2 \geq 0$ on $(0,2\sqrt{2}]$.  For $x \in (0,2\sqrt{2}]$, assuming $|\gamma'(x)| \leq \frac{\sqrt{3}}{3}$ and $b<\frac{1}{2}$, we have
\begin{eqnarray}
\gamma''(x) & = & (1+\gamma'(x)^2) \left[ \left(\frac{1}{2}x - \frac{1}{x} \right) \gamma'(x) - \frac{1}{2} \gamma(x) \right] \nonumber \\
& \geq & \frac{4}{3} \left[ - \sqrt{2} \frac{\sqrt{3}}{3} - \frac{b}{2} \right] \geq -2, \nonumber
\end{eqnarray}
and it follows that $1 + \gamma(x) \gamma''(x) + \gamma'(x)^2 \geq 0$.
\end{proof}

\begin{lemma}
\label{c4:fb}
Suppose $x_* > 2\sqrt{2}$ and there exists a point $x_0 \in [2,2\sqrt{2}]$ so that $\gamma(x_0) = 0$. Then, for $x \in [x_0, x_*)$, $$\gamma(x) > \frac{8}{x}\gamma'(x).$$
\end{lemma}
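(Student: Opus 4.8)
The plan is to recast the inequality as the positivity on $[x_0,x_*)$ of
$$\phi(x) := x\,\gamma(x) - 8\,\gamma'(x);$$
since $x>0$ there, $\phi(x)>0$ is equivalent to $\gamma(x)>\tfrac{8}{x}\gamma'(x)$. At the left endpoint $\gamma(x_0)=0$, and since $\gamma'(0)=0$ with $\gamma''<0$ (Claim~\ref{claim:ccd}) we have $\gamma'(x_0)<0$, so $\phi(x_0)=-8\gamma'(x_0)>0$. As $\phi(x_0)>0$ and $\phi$ is continuous, it suffices to rule out $\phi$ vanishing somewhere on $(x_0,x_*)$. I would do this with a first-contact argument: if $\phi$ did vanish in $(x_0,x_*)$, let $\bar x$ be the first such point, so that $\phi\ge 0$ on $[x_0,\bar x]$, $\phi(\bar x)=0$, and hence $\phi'(\bar x)\le 0$; the contact condition is $\gamma(\bar x)=\tfrac{8}{\bar x}\gamma'(\bar x)$ with $\gamma'(\bar x)<0$.

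The crux is that concavity of $\gamma$ forces the contact point far to the right. Since $\gamma'$ is decreasing,
$$\gamma(\bar x)=\int_{x_0}^{\bar x}\gamma'(t)\,dt\ \ge\ \gamma'(\bar x)\,(\bar x-x_0),$$
and inserting $\gamma(\bar x)=\tfrac{8}{\bar x}\gamma'(\bar x)$ and dividing by the negative number $\gamma'(\bar x)$ gives $\tfrac{8}{\bar x}\le \bar x-x_0$, i.e. $\bar x^2-x_0\bar x-8\ge 0$. Because $x_0\ge 2$ and the positive root of $t^2-x_0 t-8$ increases with $x_0$ (equalling $4$ at $x_0=2$), this yields $\bar x\ge 4$. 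Next I would differentiate $\phi$ and eliminate $\gamma''$ using~(\ref{gamma:ode}); a direct computation, after substituting the contact relation, gives
$$\phi'(\bar x)=\gamma'(\bar x)\left[\Big(\tfrac{48}{\bar x}-3\bar x\Big)+\gamma'(\bar x)^2\Big(\tfrac{40}{\bar x}-4\bar x\Big)\right].$$
For $\bar x\ge 4$ we have $\tfrac{48}{\bar x}-3\bar x\le 0$ and $\tfrac{40}{\bar x}-4\bar x\le -6<0$, so the bracket is strictly negative (recall $\gamma'(\bar x)<0$), whence $\phi'(\bar x)>0$ --- contradicting $\phi'(\bar x)\le 0$. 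Therefore $\phi>0$ on $[x_0,x_*)$, which is the claim.

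The main obstacle is precisely the first of these two steps. One cannot close the maximum-principle argument from the sign of $\phi'$ alone: the bracket in the last display is positive when $\bar x$ is close to $x_0$, so without further input the contact point could in principle sit just to the right of $x_0$. It is the concavity of $\gamma$ together with the normalization $x_0\ge 2$ that pushes any hypothetical contact point out to $\bar x\ge 4$, which is exactly the range in which the coefficient $8$ makes the bracket negative. Once this is recognized the remainder is routine differentiation; note that neither the smallness of $b$ nor the earlier pointwise bounds on $\gamma'$ enter the argument.
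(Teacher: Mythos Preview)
Your proof is correct and follows essentially the same approach as the paper's: both define (up to a factor of $8$) the same auxiliary function, use concavity of $\gamma$ together with $x_0\ge 2$ to push any first zero out to $\bar x\ge 4$, and then derive a sign contradiction for the derivative at the contact point via the ODE. The only difference is cosmetic --- the paper first uses $-\gamma''\ge -\gamma''/(1+(\gamma')^2)$ (valid since $\gamma''<0$) to drop the $(\gamma')^2$ terms, arriving at the cleaner expression $\Phi'(\bar x)\ge \gamma(\bar x)\bigl(\tfrac34-\tfrac{3}{64}\bar x^2\bigr)$, whereas you substitute the full ODE directly.
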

\begin{proof}
Let $\Phi(x) = \frac{1}{8} x \gamma(x) - \gamma'(x)$. We want to show $\Phi(x) > 0$. We know that $\Phi(x_0) = -\gamma'(x_0) > 0$. We also have
\begin{eqnarray}
\frac{1}{8}x \gamma(x) & = & \frac{1}{8} x \int_{x_0}^x \gamma'(\xi) d\xi \nonumber \\
& > & \frac{1}{8} x (x-x_0) \gamma'(x). \nonumber
\end{eqnarray}
Since $x_0 \geq 2$, we see that $\Phi(x) > 0$ when $x \leq 4$.

Suppose $\Phi(x) = 0$ for some $x \in [x_0, x_*)$. Then $x >4$ and there exists a point $\bar{x} \in (4,x_*)$ so that $\Phi(\bar{x}) = 0$ and $\Phi(x) > 0$ for $x \in [x_0, \bar{x})$. This implies that $\Phi'(\bar{x}) \leq 0$ and $\frac{1}{8}\bar{x} \gamma(\bar{x}) = \gamma'(\bar{x})$. Since $\bar{x} > 4$ and $\gamma(\bar{x}) < 0$, we have
\begin{eqnarray}
\Phi'(\bar{x}) & = & \frac{1}{8} \gamma(\bar{x}) + \frac{1}{8} \bar{x} \gamma'(\bar{x}) - \gamma''(\bar{x}) \nonumber \\
& \geq & \frac{1}{8} \gamma(\bar{x}) + \frac{1}{8} \bar{x} \gamma'(\bar{x}) - \frac{\gamma''(\bar{x})}{1+\gamma'(\bar{x})^2} \nonumber \\
& = & \frac{1}{8} \gamma(\bar{x}) + \frac{1}{8} \bar{x} \gamma'(\bar{x}) - \left[ \left(\frac{1}{2}\bar{x} - \frac{1}{\bar{x}} \right) \gamma'(\bar{x}) - \frac{1}{2} \gamma(\bar{x}) \right]  \nonumber \\
& = & \gamma(\bar{x}) \left( \frac{1}{8} + \frac{1}{64} (\bar{x})^2 - \left[ \left(\frac{1}{2}\bar{x} - \frac{1}{\bar{x}} \right) \frac{1}{8} \bar{x} - \frac{1}{2} \right] \right) \nonumber \\
& = & \gamma(\bar{x}) \left( \frac{3}{4} - \frac{3}{64}(\bar{x})^2 \right) > 0, \nonumber
\end{eqnarray}
which is a contradiction.
\end{proof}

\begin{proof}[Proof of Proposition~\ref{gamma:small:height}]
Let $b>0$, and let $\gamma$ be the solution of~(\ref{gamma:ode}) with $\gamma(0) = b$ and $\gamma'(0) = 0$. We know there exists a point $x_* \in (\sqrt{2}, \infty)$ and a point $z_* \in (-\infty, b)$ so that $\lim_{x \to x_*} \gamma'(x) = - \infty$ and $\gamma(x_*) = z_*$. We assume $b < \sqrt{ \frac{2}{3 \pi} } \cdot \frac{1}{e^4}$ (and also $b<\frac{1}{2}$). By Claim~\ref{c3:fb}, we know that $x_* > 2\sqrt{2}$ and $|\gamma'(x)| \leq \frac{\sqrt{3}}{3}$ for $x \in [0,2\sqrt{2}]$. Then by Claim~\ref{c1:fb} we have $\gamma'''<0$ on $(0,x_*)$. Integrating this inequality from $0$ to $x$ repeatedly, we see that $$\gamma(x) < b(1- \frac{1}{8}x^2).$$ Since $x_*>2\sqrt{2}$, it follows that there exists $x_0 \in (0,2\sqrt{2})$ so that $\gamma(x_0) = 0$.

To estimate $x_0$ from below, we write equation~(\ref{gamma:ode}) in the form
\begin{equation}
\label{gamma:div:form}
\frac{d}{dx} \left( \frac{x \gamma'(x)}{ \sqrt{1+\gamma'(x)^2} } \right) = \frac{1}{2} x \cdot \frac{x \gamma'(x) - \gamma(x) }{ \sqrt{1+\gamma'(x)^2} }.
\end{equation}
It follows from Claim~\ref{c2:fb} that $\frac{x \gamma'(x) - \gamma(x) }{ \sqrt{1+\gamma'(x)^2} } \geq \frac{x_0 \gamma'(x_0) }{ \sqrt{1+\gamma'(x_0)^2} }$, for $x \in [0,x_0]$, and thus by integrating~(\ref{gamma:div:form}) from $0$ to $x_0$, we get
\begin{eqnarray}
\frac{x_0 \gamma'(x_0)}{ \sqrt{1+\gamma'(x_0)^2} } & = & \int_0^{x_0} \frac{1}{2} x \cdot \frac{x \gamma'(x) - \gamma(x) }{ \sqrt{1+\gamma'(x)^2} } dx \nonumber \\
& \geq & \frac{x_0 \gamma'(x_0)}{ \sqrt{1+\gamma'(x_0)^2} } \int_0^{x_0} \frac{1}{2} x dx. \nonumber 
\end{eqnarray}
Therefore, $1 \leq \frac{(x_0)^2}{4}$ and we see that $x_0 \geq 2$. This proves the last statement in the proposition.

Next, we want to slightly refine the estimate from Claim~\ref{c3:fb} to establish a lower bound for $x_*$ in terms of $b$. This will simplify the constants that appear in the following calculations. Let $x_1 \in (0,x_*)$ be the point where $\gamma'(x_1) = -1$. Using the same argument we used in the proof of Claim~\ref{c3:fb}, we integrate the inequality $$\frac{d}{dx} \left( e^{-\frac{x^2}{2}} \gamma'(x) \right) \geq - e^{-\frac{x^2}{2}} \gamma(x)$$ from $0$ to $x_1$ to conclude that $-e^{- \frac{(x_1)^2}{2}} \geq - b \sqrt{ \frac{\pi}{2} } $ and therefore, $$x_1 \geq \sqrt{ \log{ \frac{2}{\pi b^2} } }.$$ Since $x_0 \in [2, 2\sqrt{2}]$, it follows from Lemma~\ref{c4:fb} that $\gamma(x) > \frac{8}{x}\gamma'(x)$. In particular, at $x_1$, we have $$\gamma(x_1) > - \frac{8}{x_1} \geq - \frac{8}{ \sqrt{\log{\frac{2}{\pi b^2}}} }.$$ We will extend this estimate for $\gamma(x_1)$ to an estimate for $\gamma(x_*) = z_*$. We assume $b \leq \sqrt{\frac{2}{\pi e^{25}}}$ so that $x_1 \geq 5$. For $x \geq x_1$, we have
\begin{eqnarray}
\gamma''(x) & \leq & \gamma'(x)^2 \frac{\gamma''(x)}{1 + \gamma'(x)^2} \nonumber \\
& = & \gamma'(x)^2 \left[ \left( \frac{1}{2}x - \frac{1}{x} \right) \gamma'(x) - \frac{1}{2} \gamma(x) \right] \nonumber \\
& < & \gamma'(x)^2 \left( \frac{1}{2}x - \frac{5}{x} \right) \gamma'(x) \nonumber \\
& \leq &  \frac{1}{4} x \gamma'(x)^3, \nonumber
\end{eqnarray}
where we have used that $x \geq 5$ and $\gamma(x) > \frac{8}{x}\gamma'(x)$.

Integrating the previous inequality from $x$ to $x_*$, implies $$\gamma'(x)^2 \leq \frac{4}{(x_*)^2 - x^2},$$ for $x \geq x_1$. Since $\gamma'(x) < 0$, we have 
\begin{equation}
\label{gamma:slope:end}
\gamma'(x) \geq - \frac{2}{\sqrt{(x_*)^2 - x^2}} \geq - \frac{1}{\sqrt{x_* +x_1}} \cdot \frac{2}{\sqrt{x_* - x}},
\end{equation}
for $x \in [x_1, x_*)$. At $x_1$, this tells us that $$- \frac{\sqrt{x_* - x_1}}{\sqrt{x_* + x_1}} \geq - \frac{2}{x_* + x_1}.$$ Finally, integrating~(\ref{gamma:slope:end}) from $x_1$ to $x_*$, we have $$\gamma(x_*) - \gamma(x_1) \geq -\frac{4}{\sqrt{x_* + x_1}} \cdot \sqrt{x_* - x_1},$$ and therefore
\begin{eqnarray}
\gamma(x_*) & \geq & \gamma(x_1) - \frac{4}{\sqrt{x_* + x_1}} \cdot \sqrt{x_* - x_1} \nonumber \\
& \geq & - \frac{8}{x_1} -  \frac{8}{x_* + x_1} \nonumber \\
& \geq & -\frac{12}{x_1} \, \geq \, - \frac{12}{ \sqrt{\log{\frac{2}{\pi b^2}}} }, \nonumber
\end{eqnarray}
which completes the proof of the proposition with $\bar{b} = \sqrt{\frac{2}{\pi e^{25}}}$.
\end{proof}


\section{Connecting the First and Second Branches}
\label{cfsb}

Given the basic shape of $\gamma$ described in the previous section, we know that $\gamma'(x) < 0$ when $x>0$, and thus, for $x>0$, the curve $(x,\gamma(x))$ can be written as $(\alpha(z),z)$. Since $\alpha'(z) = 1 / \gamma'(\alpha(z))$ and $\gamma$ is a solution of~(\ref{gamma:ode}), it follows that $\alpha$ is a solution of~(\ref{alpha:eq}) with $\alpha(z_*) = x_*$ and $\alpha'(z_*) = 0$. In particualr, $\alpha''(z_*) = \frac{1}{x_*} - \frac{1}{2}x_* < 0$. This shows us that the $\alpha$ curve is concave down at $(x_*,z_*)$ and heads back towards the $z$-axis as $z$ decreases.  More precisely, in a neighborhood of $z_*$, we have $\alpha'(z)>0$ when $z<z_*$. It follows that the curve $(\alpha(z),z)$ can be written as a curve $(x,\beta(x))$ where $\beta(x)$ satisfies~(\ref{gamma:ode}). Using the existence theory for differential equations, we know that there exists $x_{**} < x_*$ so that $\beta$ is a solution of~(\ref{gamma:ode}) on $(x_{**},x_*)$. Here $x_{**} \geq 0$ is chosen so that $\beta$ blows-up as $x$ approaches $x_{**}$ or $x_{**}=0$. We note that $\beta(x_*) = z_*$ and $\lim_{x \to x_*} \beta'(x) = \infty$, and these two conditions uniquely determine $\beta$ as a solution of~(\ref{gamma:ode}).

In the next section we will study the behavior of $\beta$. For the remainder of this section, we discuss how the curves $\gamma$ and $\alpha$ and the point $(x_*,z_*)$ depend continuously on the initial height. From Proposition~\ref{ps:prop} we know that the $\gamma$ curves depend continuously on the initial height in a neighbrhood of 0. Once we move away from the singularity at 0, if we rewrite~(\ref{gamma:ode}) as a first order system, then  a direct application of the existence, uniqueness, and continuity theorems for differential equations extends this continuity:

\begin{proposition}
\label{prop:gamma:cont}
For $b>0$, let $\gamma_b$ denote the unique solution of~\emph{(\ref{gamma:ode})} with $\gamma_b(0) = b$ and $\gamma_b'(0) = 0$. Let $x^{b}_*$ denote the point where $\gamma_{b}$ blows-up, and let $z^b_* = \gamma_b(x^b_*)$.

Fix $b_0>0$. Then, for $\varepsilon > 0$, there exists $\delta>0$ so that if $|b-b_0| < \delta$ and $b>0$, then $\gamma_b(x)$ is defined on $[0, x^{b_0}_* - \varepsilon]$. Moreover, $$|\gamma_b(x) - \gamma_{b_0}(x)| + |\gamma_b'(x) - \gamma_{b_0}'(x)| < \varepsilon,$$ for $x \in [0, x^{b_0}_* - \varepsilon]$.
\end{proposition}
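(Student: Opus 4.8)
The plan is to split $[0,x_*^{b_0}-\varepsilon]$ at a fixed point $1/A$: on $[0,1/A]$ the continuity is already supplied by Proposition~\ref{ps:prop}, and on $[1/A,\,x_*^{b_0}-\varepsilon]$, which is bounded away from the singularity at $x=0$, I would invoke the classical existence--uniqueness--continuous-dependence theorem for ODE after rewriting~(\ref{gamma:ode}) as a first-order system.

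First I would fix $b_0>0$, set $M=b_0+1$, and let $A=A(M)$ be the constant from Proposition~\ref{ps:prop}; replacing $A$ by a larger value if necessary (the unique solution and the continuity estimate just restrict to the shorter interval), I may assume $1/A<\sqrt2<x_*^{b_0}$. Since the assertion for a given $\varepsilon$ follows from the assertion for any smaller $\varepsilon$, I may also assume $\varepsilon$ is small enough that $x_*^{b_0}-\varepsilon>1/A$. With these normalizations, Proposition~\ref{ps:prop} directly gives: for each $\varepsilon'>0$ there is $\delta'>0$ such that $|b-b_0|<\delta'$ and $|b|\le M$ imply $|\gamma_b-\gamma_{b_0}|+|\gamma_b'-\gamma_{b_0}'|<\varepsilon'$ on $[0,1/A]$, and in particular $\gamma_b$ is defined there.

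Next I would record that, setting $u=\gamma$ and $v=\gamma'$, equation~(\ref{gamma:ode}) becomes the system $u'=v$, $v'=G(x,u,v)$ with
$$G(x,u,v)=(1+v^2)\left[\left(\tfrac{x}{2}-\tfrac1x\right)v-\tfrac12 u\right],$$
and $G$ is real-analytic, in particular $C^1$, on $\{x>0\}\times\RR^2$. Because $\gamma_{b_0}$ is defined and $C^1$ on the closed interval $[1/A,\,x_*^{b_0}-\varepsilon]$, the trajectory $x\mapsto(x,\gamma_{b_0}(x),\gamma_{b_0}'(x))$ is a compact subset of $\{x>0\}\times\RR^2$; I would enclose it in a compact tubular neighborhood $K\subset\{x>0\}\times\RR^2$ and let $L$ be a Lipschitz constant for $G$ in $(u,v)$ on $K$. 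The standard continuous-dependence argument---Gronwall applied to $|u-\gamma_{b_0}|+|v-\gamma_{b_0}'|$, together with a continuation step showing that for sufficiently nearby initial data at $x=1/A$ the perturbed trajectory cannot reach $\partial K$ before $x=x_*^{b_0}-\varepsilon$---then yields: for every $\eta>0$ there is $\eta'>0$ such that any solution of the system whose value at $x=1/A$ lies within $\eta'$ of $(\gamma_{b_0}(1/A),\gamma_{b_0}'(1/A))$ exists on all of $[1/A,\,x_*^{b_0}-\varepsilon]$, stays in $K$, and remains within $\eta$ of $(\gamma_{b_0},\gamma_{b_0}')$ there.

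To conclude, given $\varepsilon>0$ as above I would apply the third step with $\eta=\varepsilon$ to obtain $\eta'$, set $\varepsilon'=\min\{\varepsilon,\eta'\}$, obtain $\delta'$ from Proposition~\ref{ps:prop} for this $\varepsilon'$, and take $\delta=\min\{\delta',b_0\}$; then $|b-b_0|<\delta$ forces $b>0$ and $|b|\le M$, Proposition~\ref{ps:prop} controls $\gamma_b$ on $[0,1/A]$ (in particular $(\gamma_b(1/A),\gamma_b'(1/A))$ lies within $\varepsilon'\le\eta'$ of $(\gamma_{b_0}(1/A),\gamma_{b_0}'(1/A))$), and the third step extends $\gamma_b$ across $[1/A,\,x_*^{b_0}-\varepsilon]$ with $|\gamma_b-\gamma_{b_0}|+|\gamma_b'-\gamma_{b_0}'|<\varepsilon$. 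This gives both the claimed existence on $[0,x_*^{b_0}-\varepsilon]$ and the estimate. I expect the only delicate point to be the continuation step in the third part---arguing that the perturbed solution does not escape the tube $K$ (equivalently, does not blow up) before $x=x_*^{b_0}-\varepsilon$; this is precisely where the hypothesis $\varepsilon>0$, i.e.\ staying away from the blow-up point $x_*^{b_0}$ of $\gamma_{b_0}'$, is essential, and it is why one cannot hope to control $\gamma_b$ on the last slice $(x_*^{b_0}-\varepsilon,x_*^{b_0})$.
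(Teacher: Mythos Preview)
Your proposal is correct and is exactly the approach the paper takes: the paper simply says to use Proposition~\ref{ps:prop} near $x=0$ and then, after rewriting~(\ref{gamma:ode}) as a first-order system, apply the standard existence, uniqueness, and continuous-dependence theorems for ODE away from the singularity---you have just fleshed out those details. One trivial bookkeeping slip: with $\delta=\min\{\delta',b_0\}$ you do not actually guarantee $|b|\le M=b_0+1$ when $b_0>1$; take $\delta=\min\{\delta',b_0,1\}$ (or set $M=2b_0$) instead.
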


We end this section with a proposition which shows how the $\alpha$ curves depend continuously on the initial height. Again, the proof of the proposition is an application of the existence, uniqueness, and continuity theorems for differential equations.

\begin{proposition}
\label{prop:alpha:cont}
Fix $b_0>0$, and let $\alpha_{b_0}$ be the unique solution of~\emph{(\ref{alpha:eq})} with $\alpha_{b_0}(z_*^{b_0}) = x_*^{b_0}$ and $\alpha_{b_0}'(z_*^{b_0}) = 0$. Let $\rho > 0$ be chosen so that $[z_*^{b_0} - \rho, z_*^{b_0} + \rho]$ is contained in the maximal interval of existence for $\alpha_{b_0}$. Then, for $\varepsilon >0$, there exists $\delta >0$ so that if $|b-b_0| < \delta$ and $b>0$, then the unique solution $\alpha_b$ of~\emph{(\ref{alpha:eq})} with $\alpha_b(z_*^{b}) = x_*^{b}$ and $\alpha_b'(z_*^{b}) = 0$ is defined on $[z_*^{b_0} - \rho , z_*^{b_0} + \rho]$. Moreover, for $z \in [z_*^{b_0} - \rho,  z_*^{b_0} + \rho]$, $$|\alpha_b(z) - \alpha_{b_0}(z)| + |\alpha_b'(z) - \alpha_{b_0}'(z)| < \varepsilon.$$ 
\end{proposition}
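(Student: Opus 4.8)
\textbf{Proof proposal for Proposition~\ref{prop:alpha:cont}.}

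The plan is to reduce everything to the standard continuity theorem for ODE systems, with the only subtlety being that the ``initial time'' $z_*^b$ and the ``initial data'' $x_*^b$ themselves vary with $b$. First I would recall from Proposition~\ref{prop:gamma:cont} (applied with the fixed base height $b_0$ and a small $\varepsilon_1$ to be chosen) that $x_*^b \to x_*^{b_0}$ and $z_*^b \to z_*^{b_0}$ as $b \to b_0$. Indeed, for $b$ near $b_0$ the curve $\gamma_b$ is defined and $C^1$-close to $\gamma_{b_0}$ on $[0, x_*^{b_0} - \varepsilon_1]$; since $\gamma_{b_0}' \to -\infty$ at $x_*^{b_0}$ and $\gamma_{b_0}'' < 0$, one can pin down $x_*^b$ as (essentially) the point where $\gamma_b'$ reaches a large fixed negative value, and the $C^1$-closeness plus the a priori blow-up estimate from Lemma~\ref{bd:blowup} (which controls $\gamma_b$ near its blow-up uniformly for $b$ near $b_0$) forces $x_*^b \to x_*^{b_0}$ and then $z_*^b = \gamma_b(x_*^b) \to \gamma_{b_0}(x_*^{b_0}) = z_*^{b_0}$.

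Next I would rewrite~(\ref{alpha:eq}) as a first-order system for $(\alpha, \alpha') = (u_1, u_2)$:
\begin{equation*}
u_1' = u_2, \qquad u_2' = (1+u_2^2)\left( \frac{1}{u_1} - \frac{1}{2} u_1 + \frac{1}{2} z u_2 \right),
\end{equation*}
whose right-hand side is smooth (in particular locally Lipschitz) on the open set $\{u_1 > 0\}$. Let $U_{b_0}(z) = (\alpha_{b_0}(z), \alpha_{b_0}'(z))$ be the reference solution on $[z_*^{b_0} - \rho, z_*^{b_0} + \rho]$; its trajectory is a compact subset of $\{u_1 > 0\}$, so it has a compact tubular neighborhood $K$ on which the right-hand side is Lipschitz with some constant $L$. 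The solution $\alpha_b$ is the solution of this system with ``initial condition'' $U_b(z_*^b) = (x_*^b, 0)$. By the standard continuous-dependence theorem (e.g. via Gronwall), a solution starting at time $z_*^b$ from the point $(x_*^b, 0)$ stays within $K$ and within $\varepsilon$ of $U_{b_0}$ on $[z_*^{b_0} - \rho, z_*^{b_0} + \rho]$, provided the ``initial discrepancy'' — which here is the distance between the base point $(z_*^{b_0}, U_{b_0}(z_*^{b_0})) = (z_*^{b_0}, x_*^{b_0}, 0)$ and $(z_*^b, x_*^b, 0)$, i.e. $|z_*^b - z_*^{b_0}| + |x_*^b - x_*^{b_0}|$ — is small enough (say less than $\varepsilon e^{-L\rho}$ times a fixed constant). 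By the first paragraph that discrepancy does go to $0$ as $b \to b_0$, so choosing $\delta$ accordingly gives the conclusion. To be careful about the fact that the initial time moves: one first flows $U_{b_0}$ from $z_*^{b_0}$ to $z_*^b$ (a short time, $C^1$-small since the flow is Lipschitz and $|z_*^b - z_*^{b_0}|$ is small), compares $(x_*^b,0)$ to that flowed value, and then applies Gronwall on the common interval.

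The main obstacle I expect is precisely the joint continuity of the base point: showing $x_*^b \to x_*^{b_0}$ and $z_*^b \to z_*^{b_0}$ rigorously, since the blow-up point is defined by a limiting condition rather than by a value of $\gamma_b$ at a fixed $x$. Everything downstream of that — rewriting as a system, invoking Lipschitz continuity on a compact tube, and running Gronwall — is entirely routine. The cleanest way to handle the obstacle is to use the monotonicity ($\gamma_b'' < 0$, so $\gamma_b'$ is strictly decreasing) to characterize $x_*^b$ via level sets of $\gamma_b'$, combine that with the $C^1$-closeness from Proposition~\ref{prop:gamma:cont} on $[0, x_*^{b_0} - \varepsilon_1]$ for the ``$x_*^b \le x_*^{b_0} + \text{small}$'' direction, and use the uniform control near the blow-up from Lemma~\ref{bd:blowup} (the bound $\gamma_b'(x) \ge -M/\sqrt{x_*^b - x}$ with $M$ stable in $b$) for the ``$x_*^b \ge x_*^{b_0} - \text{small}$'' direction.
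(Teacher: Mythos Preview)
Your argument is correct, but it inverts the paper's order of dependencies. In the paper, Proposition~\ref{prop:alpha:cont} is proved \emph{before} the continuity of $(x_*^b, z_*^b)$, and the latter is then deduced from it in Corollary~\ref{gamma:blowup:pt}. The paper's implicit argument therefore cannot use $(x_*^b, z_*^b) \to (x_*^{b_0}, z_*^{b_0})$ as an input. Instead one matches at a \emph{fixed} height: pick $z_1 = z_*^{b_0} + \rho$; since $\alpha_b$ coincides with the inverse of $\gamma_b$ for $z > z_*^b$, Proposition~\ref{prop:gamma:cont} applied near the point $x_1 = \gamma_{b_0}^{-1}(z_1) < x_*^{b_0}$ gives $\alpha_b(z_1) \to \alpha_{b_0}(z_1)$ and $\alpha_b'(z_1) = 1/\gamma_b'(\alpha_b(z_1)) \to \alpha_{b_0}'(z_1)$ directly. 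From there the standard continuous-dependence theorem for the regular system~(\ref{alpha:eq}) on the compact interval $[z_*^{b_0}-\rho, z_*^{b_0}+\rho]$ finishes the proof, with no moving initial time.

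Your route --- first proving $(x_*^b, z_*^b) \to (x_*^{b_0}, z_*^{b_0})$ via Proposition~\ref{prop:gamma:cont} together with the quantitative blow-up bound of Lemma~\ref{bd:blowup}, and then running Gronwall from the moving base point --- also works, but it costs you the extra step of checking that the constant $M$ in Lemma~\ref{bd:blowup} can be taken uniformly for $b$ near $b_0$ (this is true, using the lower bound $x_*^b \ge x_*^{b_0} - \varepsilon_1$ you already have, but it needs to be said). The payoff is that you get Corollary~\ref{gamma:blowup:pt} for free inside your proof; the cost is that the argument is longer and duplicates what the paper obtains afterward as a clean corollary of the $\alpha$-continuity itself.
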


As an application of Proposition~\ref{gamma:small:height} and Proposition~\ref{prop:alpha:cont} we use the rigidity of compact, mean-convex self-shrinkers due to Huisken~\cite{Hu} to show that $z_* < 0$ when $\gamma(0) \in (0,2)$. We note that this result is not essential to the proof of Theorem~\ref{thm:1}.

\begin{corollary}
\label{gamma:blowup:pt}
Fix $b_0>0$. Then, for $\varepsilon > 0$, there exists $\delta>0$ so that if $|b-b_0| < \delta$ and $b > 0$, then $|x^b_* - x^{b_0}_*| < \varepsilon$ and $|z^b_* - z^{b_0}_*| < \varepsilon$. Furthermore, if $b \in (0,2)$, then $z_*^b <0$.
\end{corollary}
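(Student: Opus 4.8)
The corollary has two parts. The first is the continuity of the blow-up data $(x_*^b,z_*^b)$ in $b$ near a fixed $b_0>0$; the obstacle here is that Proposition~\ref{prop:gamma:cont} only controls $\gamma_b$ on $[0,x_*^{b_0}-\varepsilon]$, while $\gamma_{b_0}'\to-\infty$ as $x\uparrow x_*^{b_0}$, so the blow-up point cannot be read off in $\gamma$-coordinates. My plan is to change to the $\alpha$-chart near $(x_*^{b_0},z_*^{b_0})$, where the picture is regular: $\alpha_{b_0}$ solves~(\ref{alpha:eq}) with $\alpha_{b_0}(z_*^{b_0})=x_*^{b_0}$, $\alpha_{b_0}'(z_*^{b_0})=0$, and $\alpha_{b_0}''(z_*^{b_0})=\frac1{x_*^{b_0}}-\frac12 x_*^{b_0}<0$ (using $x_*^{b_0}>\sqrt2$), so $\alpha_{b_0}$ has a strict, non-degenerate interior maximum at $z_*^{b_0}$. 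Fix a height $z_1$ with $z_*^{b_0}<z_1<b_0$ close to $z_*^{b_0}$, and put $x_1=\alpha_{b_0}(z_1)<x_*^{b_0}$, so that $\gamma_{b_0}(x_1)=z_1$ and $\gamma_{b_0}'(x_1)=1/\alpha_{b_0}'(z_1)$ is finite and negative. By Proposition~\ref{prop:gamma:cont}, for $b$ close to $b_0$ the solution $\gamma_b$ is defined past $x_1$ with $(\gamma_b(x_1),\gamma_b'(x_1))$ close to $(z_1,\gamma_{b_0}'(x_1))$; reparametrising $(x,\gamma_b(x))$ near $x_1$ as $(\alpha_b(z),z)$ and applying continuous dependence on initial data for~(\ref{alpha:eq}) --- as in Proposition~\ref{prop:alpha:cont}, valid since $\alpha_{b_0}$ stays away from $0$ --- gives $\alpha_b\to\alpha_{b_0}$ in $C^1$ on $[z_*^{b_0}-\rho,z_1]$ for a suitable $\rho>0$. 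Since $\alpha_{b_0}$ has a non-degenerate interior maximum on this interval, $\alpha_b$ attains its maximum at a unique interior point $\hat z_b$ with $\hat z_b\to z_*^{b_0}$ and $\alpha_b(\hat z_b)\to x_*^{b_0}$; and because the blow-up of $\gamma_b$ occurs exactly where its curve has a vertical tangent, i.e.\ where $\alpha_b'=0$ with $\alpha_b''<0$, we get $\hat z_b=z_*^b$ and $\alpha_b(\hat z_b)=x_*^b$, which is the first assertion.

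For the second assertion, suppose $z_*^{b_1}\ge 0$ for some $b_1\in(0,2)$. By Proposition~\ref{gamma:small:height} we have $z_*^b<0$ for $b\in(0,\bar b]$, and $b\mapsto z_*^b$ is continuous on $(0,\infty)$ by the first part, so $b''=\inf\{b\in(0,2):z_*^b\ge 0\}$ lies in $[\bar b,2)$ and satisfies $z_*^{b''}=0$. Then $\gamma_{b''}$ is a positive, decreasing, concave-down graph on $[0,x_*^{b''})$ meeting the $x$-axis only in the limit at $(x_*^{b''},0)$, with a vertical tangent there and a horizontal tangent at $(0,b'')$. Its smooth continuation past $(x_*^{b''},0)$ as the solution $\alpha$ of~(\ref{alpha:eq}) with $\alpha(0)=x_*^{b''}$, $\alpha'(0)=0$ is even in $z$ (equation~(\ref{alpha:eq}) together with this data is invariant under $z\mapsto-z$), so by uniqueness the continued curve is exactly the reflection of $(x,\gamma_{b''}(x))$ across the $x$-axis; hence $\Gamma=\{(x,\gamma_{b''}(x))\}\cup\{(x,-\gamma_{b''}(x))\}$ is a smooth curve (the two arcs join smoothly at the vertical-tangent point $(x_*^{b''},0)$) meeting the $z$-axis perpendicularly at $(0,\pm b'')$. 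Rotating $\Gamma$ about the $z$-axis produces a smooth, embedded (the two arcs lie in $\{z\ge0\}$ and $\{z\le0\}$ and meet only along the equator) $S^2$ self-shrinker $\Sigma$.

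It remains to show $\Sigma$ is mean-convex, after which Huisken's rigidity theorem~\cite{Hu} forces $\Sigma$ to be the standard sphere of radius $2$, contradicting $b''<2$. From $\Delta_g F=-\frac12 F^\perp=-\frac12\langle F,\nu\rangle\nu$ we get $H=-\frac12\langle F,\nu\rangle$, so it suffices to check that $\langle F,\nu\rangle$ never vanishes on $\Sigma$. By rotational symmetry this support function equals the planar support function of $\Gamma$, which along either arc is $h(x)/\sqrt{1+\gamma_{b''}'(x)^2}$ with $h(x)=\gamma_{b''}(x)-x\gamma_{b''}'(x)$; since $h(0)=b''$ and $h'(x)=-x\gamma_{b''}''(x)>0$ for $x>0$ by Claim~\ref{claim:ccd}, we have $h\ge b''>0$, while at the two poles and along the equator the support function equals $b''$ and $x_*^{b''}$ respectively. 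Thus $\langle F,\nu\rangle$ is continuous and nowhere zero on the connected surface $\Sigma$, hence of constant sign, so $\Sigma$ is mean-convex, completing the argument. The main obstacle is the first part: the blow-up renders a direct $C^1$-comparison in $\gamma$-coordinates useless, and one must genuinely pass to the $\alpha$-chart, where the blow-up point becomes a non-degenerate maximum that varies continuously with $b$; the mean-convexity in the second part is then a short computation, once one observes that $\gamma_{b''}(x)-x\gamma_{b''}'(x)$ is increasing.
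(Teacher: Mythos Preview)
Your proof is correct and follows essentially the same approach as the paper: continuity of $(x_*^b,z_*^b)$ via passage to the $\alpha$-chart (this is exactly what the paper packages as Proposition~\ref{prop:alpha:cont} and then simply cites for the first assertion), followed by a continuity/intermediate-value argument combined with Huisken's rigidity to rule out $z_*^b\ge 0$ for $b<2$. You supply more detail than the paper---in particular, you spell out the non-degenerate-maximum argument in the $\alpha$-chart and verify mean-convexity explicitly via the monotonicity of $\gamma-x\gamma'$, where the paper simply asserts the resulting surface is convex---but the route is the same.
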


\begin{proof}
The first statement is a consequence of Proposition~\ref{prop:alpha:cont}. To prove the second statement, let $$b_0 = \max \{ b' \in (0,2] \, : \, z_*^b<0 \textrm{ for } b \in (0, b'] \}.$$ It follows from Proposition~\ref{gamma:small:height} that $b_0$ is well-defined. We will show that $b_0 =2$. By definition of $b_0$, there exists an increasing sequence $b_n$ converging to $b_0$ so that $z_*^{b_n} < 0$. Applying the first part of the corollary, we have $z_*^{b_0} \leq 0$. If $z_*^{b_0} = 0$, then the rotation of the curve $\gamma_{b_0} \cup -\gamma_{b_0}$ about the $z$-axis is a convex, compact self-shrinker. By Huisken's classification of compact, mean-convex self-shrinkers, this must be the sphere of radius 2 centered at the origin, and in this case $b_0=2$. If $b_0 < 2$, then $z_*^{b_0} < 0$, and by the first part of the corollary, there exists $\delta>0$ so that $z_*^b < 0$ for $|b - b_0| < \delta$. This contradicts the choice of $b_0$ as the maximum of $\{ b' \in (0,2] \, : \, z_*^b<0 \textrm{ for } b \in (0, b'] \}$, and we conclude that $b_0 =2$. In particular, $z_*^b <0$ when $b \in (0,2)$.
\end{proof}


\section{The Second Branch}
\label{beta:branch}

In this section we study the $\beta$ curves as they travel from $(x_*,z_*)$ toward the $z$-axis. For $b>0$, let $\gamma$ denote the solution of~(\ref{gamma:ode}) with $\gamma(0) = b$ and $\gamma'(0) = 0$, let $x_*$ denote the point where $\gamma$ blows-up, and let $z_* = \gamma(x_*)$. Also, let $\beta$ denote the unique solution of~(\ref{gamma:ode}) with $\beta(x_*) = z_*$ and $\lim_{x \to x_*} \beta'(x) = \infty$, and let $x_{**} \in [0,x_*)$ be the point where $\beta$ blows-up, or if no such point exists, set $x_{**}=0$. We will show for small $b>0$ that $\beta$ achieves a negative minimum at a point $x_m \in (x_{**}, x_*)$, $\beta$ is concave up, $x_{**}>0$, and $0<\beta(x_{**}) < \infty$.

\subsection{Basic shape of $\beta$}

First, we prove some basic properties of the $\beta$ curves that are consequences of equations~(\ref{gamma:ode}) and~(\ref{gamma:eq:3}) and the fact that $\lim_{x \to x_*} \beta'(x) = \infty$.
\begin{claim}
\label{beta:claim1}
There exists at most one point $x_m \in (x_{**},x_*)$ for which $\beta'(x_m) = 0$. If such a point exists, then $\beta'' > 0$ on $(x_{**}, x_*)$.
\end{claim}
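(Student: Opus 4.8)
The plan is to argue by a combination of the differential equation~(\ref{gamma:ode}) evaluated at a critical point and equation~(\ref{gamma:eq:3}) for $\beta'''$, exactly as in the analogous Claim~\ref{claim:ccd} and Claim~\ref{c1:fb} for the $\gamma$ branch, but now adapted to the fact that $\beta$ comes in from $x_*$ with $\beta'(x) \to +\infty$ rather than starting flat at the axis.

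First I would record the sign of $\beta'$ near $x_*$: since $\lim_{x\to x_*}\beta'(x) = +\infty$ and $\beta$ solves~(\ref{gamma:ode}), we have $\beta' > 0$ on some interval $(x_* - \eta, x_*)$, so $\beta$ is increasing as it approaches the blow-up point, and correspondingly $z_* = \beta(x_*)$ with $\beta(x) < z_*$ just to the left. Next, suppose $x_m \in (x_{**}, x_*)$ is a point with $\beta'(x_m) = 0$. Evaluating~(\ref{gamma:ode}) at $x_m$ gives $\beta''(x_m) = -\tfrac{1}{2}\beta(x_m)$, so the sign of $\beta''(x_m)$ is opposite to the sign of $\beta(x_m)$; since $\beta$ is increasing near $x_*$ and $\beta(x_m)$ is a value attained strictly to the left of $x_*$ where the curve later rises to $z_* < 0$ (by Proposition~\ref{gamma:small:height}, for small $b$ we have $z_* < 0$), we expect $\beta(x_m) < 0$ and hence $\beta''(x_m) > 0$. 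That establishes the critical point is a strict local minimum, which already forces uniqueness of such a point on an interval where we can propagate the sign of $\beta''$.

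To upgrade the strict-minimum property to $\beta'' > 0$ on all of $(x_{**}, x_*)$, I would run a continuity/maximum-principle argument on $\beta''$ using~(\ref{gamma:eq:3}): write that equation in the form
\begin{equation*}
\frac{\beta'''}{1+(\beta')^2} = \frac{2\beta'(\beta'')^2}{(1+(\beta')^2)^2} + \left(\frac12 x - \frac1x\right)\beta'' + \frac{1}{x^2}\beta',
\end{equation*}
and suppose $\beta''$ vanishes at some first point $\bar{x}$ (coming from $x_m$, say, moving left, or from $x_*$ moving left) with $\beta'' > 0$ on one side. At such a $\bar x$ the left side has a definite sign from $\beta'''(\bar x)$, the first two terms on the right vanish, and we are left with $\tfrac{1}{\bar x^2}\beta'(\bar x)$; controlling the sign of $\beta'(\bar x)$ then yields the contradiction. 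Here one has to be careful about which side of $x_m$ one is on, since $\beta'$ changes sign at $x_m$: to the right of $x_m$ we have $\beta' > 0$ and to the left $\beta' < 0$, so the argument splits into the two subintervals $(x_{**}, x_m)$ and $(x_m, x_*)$, and in each the sign of the $\tfrac{1}{x^2}\beta'$ term is what we need. On the interval $(x_m, x_*)$ the situation mirrors Claim~\ref{claim:ccd} (with signs reversed), and on $(x_{**}, x_m)$ one exploits $\beta' < 0$ together with $\beta'' $ not yet having changed sign.

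The main obstacle I anticipate is handling the interval $(x_{**}, x_m)$ where $\beta' < 0$: there the naive sign chase from~(\ref{gamma:eq:3}) goes the wrong way, so I would instead argue that $\beta''$ simply cannot reach zero from the left of $x_m$ before $\beta'$ would have to return to $0$, contradicting the asserted uniqueness of $x_m$; in other words, the uniqueness statement and the convexity statement are proved together by assuming a first failure point of $\beta'' > 0$ and deriving a contradiction in each case. I would also want to double-check the edge behavior as $x \downarrow x_{**}$, where $\beta$ blows up, to be sure no critical point of $\beta'$ escapes to the boundary; but since the claim only asserts existence of \emph{at most one} interior critical point and convexity \emph{if} one exists, the boundary behavior does not actually need to be resolved here and can be deferred to the later analysis of $\beta(x_{**})$.
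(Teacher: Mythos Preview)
Your outline has the right shape, but there are two concrete problems.

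First, the argument for $\beta''(x_m)>0$ is either circular or unnecessarily restricted. You want $\beta(x_m)<0$ from $\beta(x_m)<z_*<0$, but the inequality $\beta(x_m)<z_*$ already requires $\beta$ to be monotone on $(x_m,x_*)$, i.e.\ that $x_m$ is the rightmost critical point---part of what you are proving. And even granting that, you invoke $z_*<0$ via Proposition~\ref{gamma:small:height}, which only covers small $b$, while the claim is stated and used for general $b>0$. The paper avoids all of this with one observation you are missing: at any critical point, if $\beta''(x_m)=0$ then~(\ref{gamma:ode}) forces $\beta(x_m)=0$ as well, and by uniqueness of solutions $\beta\equiv 0$, contradicting $\beta'\to\infty$ at $x_*$. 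So every critical point is automatically nondegenerate, and the sign $\beta''(x_m)>0$ is then fixed by $\beta'>0$ near $x_*$, with no appeal to the sign of $z_*$.

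Second, the obstacle you anticipate on $(x_{**},x_m)$ is not there; the sign chase via~(\ref{gamma:eq:3}) works on that side exactly as on $(x_m,x_*)$. With $\beta'<0$ on $(x_{**},x_m)$, let $\bar{x}$ be the largest point below $x_m$ with $\beta''(\bar{x})=0$, so $\beta''>0$ on $(\bar{x},x_m]$ and hence $\beta'''(\bar{x})\geq 0$. At $\bar{x}$ equation~(\ref{gamma:eq:3}) collapses to
\[
\frac{\beta'''(\bar{x})}{1+\beta'(\bar{x})^2}=\frac{1}{\bar{x}^2}\,\beta'(\bar{x})<0,
\]
a contradiction. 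This is the mirror image of the right-side argument, and the paper simply writes ``similarly on $(x_{**},x_m)$''. The coupled uniqueness/convexity workaround you sketch is unnecessary.
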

\begin{proof}
Using equation~(\ref{gamma:eq:3}), we see that $\beta'$ cannot vanish at more than one point. Now, suppose $\beta'(x_m) = 0$ for some $x_m \in (x_{**},x_*)$. By uniqueness of solutions, $\beta(x_m) \neq 0$ (otherwise $\beta$ would be identically 0), and thus $\beta''(x_m) \neq 0$. Since $\lim_{x \to x_*} \beta'(x) = \infty$, we know that $\beta'(x) > 0$ for $x < x_*$ and near $x_*$, and it follows that $\beta''(x_m) > 0$. Arguing as in the proof of Claim~\ref{claim:ccd}, we see that $\beta'' > 0$ on $(x_m,x_*)$ and similarly on $(x_{**},x_m)$. We note that $\beta(x_m) < 0$.
\end{proof}

\begin{claim}
\label{beta:zero}
If there exists $x_m \in (x_{**},x_*)$ so that $\beta'(x_m) = 0$, then $\beta(x)<0$ whenever $x \in (x_{**},x_m)$ and $x \geq \sqrt{2}$.
\end{claim}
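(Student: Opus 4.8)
The plan is to prove Claim~\ref{beta:zero} by contradiction, evaluating the self-shrinker equation~(\ref{gamma:ode}) at a hypothetical point of the interval $(x_{**},x_m)\cap[\sqrt2,\infty)$ where $\beta$ fails to be negative, and reading off a contradiction from the sign of $\beta''$ there.

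First I would unpack what the hypothesis buys us. Since $x_m\in(x_{**},x_*)$ with $\beta'(x_m)=0$ exists, Claim~\ref{beta:claim1} gives $\beta''>0$ on all of $(x_{**},x_*)$. Hence $\beta'$ is strictly increasing on $(x_{**},x_*)$, and because it vanishes at $x_m$ it must satisfy $\beta'(x)<0$ for every $x\in(x_{**},x_m)$. (We also recall from Claim~\ref{beta:claim1} that $\beta(x_m)<0$, though this is not strictly needed for the argument below.)

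Now suppose, for contradiction, that there is some $\bar x\in(x_{**},x_m)$ with $\bar x\ge\sqrt2$ and $\beta(\bar x)\ge 0$. Since $\bar x\ge\sqrt2$ we have $\tfrac12\bar x-\tfrac1{\bar x}\ge 0$, and from the previous paragraph $\beta'(\bar x)<0$, so the first term on the right of~(\ref{gamma:ode}) at $\bar x$ is $\le 0$; the second term, $-\tfrac12\beta(\bar x)$, is also $\le 0$ by assumption. Therefore the left side, $\beta''(\bar x)/(1+\beta'(\bar x)^2)$, is $\le 0$, contradicting $\beta''>0$ on $(x_{**},x_*)$. This proves $\beta(x)<0$ for all $x\in(x_{**},x_m)$ with $x\ge\sqrt2$.

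There is no real obstacle here: the argument is a short sign chase off of~(\ref{gamma:ode}) and Claim~\ref{beta:claim1}. The only points that need care are (i) obtaining the \emph{strict} inequality $\beta'(\bar x)<0$, which relies on the strict monotonicity of $\beta'$ coming from $\beta''>0$, and (ii) noting that the threshold $\sqrt2$ is exactly the value at which the coefficient $\tfrac12 x-\tfrac1x$ changes sign, so that the restriction $x\ge\sqrt2$ in the statement is precisely what makes the first term on the right of~(\ref{gamma:ode}) nonpositive; at the endpoint $\bar x=\sqrt2$ that term is $0$ and the conclusion still follows from $-\tfrac12\beta(\bar x)\le 0$.
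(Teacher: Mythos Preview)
Your proof is correct and follows essentially the same approach as the paper's: both use that $\beta''>0$ (from Claim~\ref{beta:claim1}) forces $\beta'<0$ on $(x_{**},x_m)$, and then read off a sign contradiction from~(\ref{gamma:ode}) at a point with $x\ge\sqrt{2}$. The only cosmetic difference is that the paper argues contrapositively---showing that any zero of $\beta$ in $(x_{**},x_m)$ must lie at $x<\sqrt{2}$, then tacitly invoking continuity and $\beta(x_m)<0$---whereas you assume $\beta(\bar x)\ge 0$ directly, which is slightly more self-contained.
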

\begin{proof}
In this case, we have $\beta'' > 0$ on $(x_{**},x_*)$. If $\beta(x) = 0$ for some $x \in (x_{**}, x_m)$, then $\beta'(x)<0$, and using~(\ref{gamma:ode}), we see that $x< \sqrt{2}$.
\end{proof}

\begin{claim}
$x_{**} < \sqrt{2}$.
\end{claim}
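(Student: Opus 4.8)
The plan is to argue by contradiction: assume $x_{**}\geq\sqrt2$. Since $x_{**}=0$ is permitted and $0<\sqrt2$, we may assume $x_{**}>0$, so that $\beta$ is a solution of~(\ref{gamma:ode}) on $(x_{**},x_*)\subseteq(\sqrt2,x_*)$ that blows up at $x_{**}$, and we must derive a contradiction.

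First I would pin down how $\beta$ blows up at $x_{**}$. Arguing as in Claim~\ref{beta:claim1} via~(\ref{gamma:eq:3}), at any $\bar x$ with $\beta''(\bar x)=0$ one has $\beta'''(\bar x)=(1+\beta'(\bar x)^2)\beta'(\bar x)/\bar x^2$, and $\beta'(\bar x)\neq0$ (otherwise~(\ref{gamma:ode}) forces $\beta(\bar x)=0$ and hence $\beta\equiv0$). So the critical points of $\beta'$ are nondegenerate, a local minimum of $\beta'$ is possible only where $\beta'>0$ and a local maximum only where $\beta'<0$; this forces at most one critical point of $\beta'$ on $(x_{**},x_*)$, so — together with $\beta''>0$ near $x_*$ — $\beta''$ has a constant sign on some interval $(x_{**},x_{**}+\varepsilon)$. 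Then $\beta'$ is monotone near $x_{**}$, so $L:=\lim_{x\to x_{**}^+}\beta'(x)$ exists in $[-\infty,+\infty]$; if $L$ were finite, $\beta$ and $\beta'$ would be bounded near $x_{**}$, contradicting blow-up, so $L=\pm\infty$, and $\beta''>0$ near $x_{**}$ forces $L=-\infty$ while $\beta''<0$ forces $L=+\infty$.

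Next I would rule out $x_{**}>\sqrt2$ directly from~(\ref{gamma:ode}). There $\tfrac12x-\tfrac1x\geq c_0>0$ near $x_{**}$, and since $\beta'$ has constant sign the monotone function $\beta$ is bounded on one side near $x_{**}$ (below when $L=-\infty$, above when $L=+\infty$). Hence in~(\ref{gamma:ode}) the term $(\tfrac12x-\tfrac1x)\beta'$ dominates and drives the right-hand side to $-\infty$ when $L=-\infty$ (to $+\infty$ when $L=+\infty$), so $\beta''\to-\infty$ (resp. $+\infty$) near $x_{**}$, contradicting the sign of $\beta''$ established above.

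Finally I would exclude $x_{**}=\sqrt2$. Set $z_{**}:=\lim_{x\to\sqrt2^+}\beta(x)$, which exists in $[-\infty,+\infty]$ by monotonicity. If $z_{**}$ is finite, write the end of the curve as $(\alpha(z),z)$; then $\alpha(z_{**})=\sqrt2$ and $\alpha'(z_{**})=0$, so by uniqueness for~(\ref{alpha:eq}) we get $\alpha\equiv\sqrt2$, contradicting that this curve is $(x,\beta(x))$ for $x>\sqrt2$ — the same device as in the proof that $x_*>\sqrt2$. If $z_{**}=+\infty$, then $\beta'<0$ near $x_{**}$, so $L=-\infty$ and $\beta''>0$; writing $x=\alpha(z)$ for large $z$ gives $\alpha(z)\to\sqrt2^+$, $\alpha'(z)\to0^-$, and $\alpha''(z)=-\beta''(\alpha(z))\alpha'(z)/\beta'(\alpha(z))^2>0$, whereas the right-hand side $(\tfrac1\alpha-\tfrac12\alpha)+\tfrac12z\alpha'$ of~(\ref{alpha:eq}) is strictly negative for large $z$ (both summands are $\leq0$, the first strictly since $\alpha>\sqrt2$) — a contradiction. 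The case $z_{**}=-\infty$ is symmetric, with $z<0$ and $\alpha'>0$. The main obstacle is the first step, namely showing the blow-up at $x_{**}$ is genuinely $\beta'\to\pm\infty$ with a one-sided limit rather than an oscillation, since this is what lets the comparisons in~(\ref{gamma:ode}) and~(\ref{alpha:eq}) close; the borderline case $x_{**}=\sqrt2$, where $\tfrac12x-\tfrac1x$ degenerates, is the other delicate point and is handled by passing to the $\alpha$-equation.
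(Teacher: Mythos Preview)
Your argument is correct. The core ingredients---the single sign-change of $\beta''$ coming from~(\ref{gamma:eq:3}), the sign contradiction in~(\ref{gamma:ode}) when $x>\sqrt2$, and the cylinder-uniqueness obstruction at $x_{**}=\sqrt2$---are the same as the paper's. The organization differs: the paper splits on the dichotomy of Claim~\ref{beta:claim1} (either $\beta'$ vanishes somewhere, whence $\beta''>0$ and $\beta$ is bounded on $[\sqrt2,x_m]$, or $\beta'>0$ throughout), and in each branch shows directly that $|\beta|$ and $|\beta'|$ stay bounded for $x\ge\sqrt2+\varepsilon$, forcing $x_{**}\le\sqrt2$; the equality case is then excluded only when $z_{**}$ is already known to be finite. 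You instead reduce everything to the dichotomy $L=\pm\infty$ and treat $x_{**}>\sqrt2$ and $x_{**}=\sqrt2$ separately. The price is that you must dispose of the possibility $z_{**}=\pm\infty$ at $x_{**}=\sqrt2$, which you do via a clean sign check in the $\alpha$-equation; the paper's case structure never encounters this scenario because boundedness of $\beta$ is built in before the cylinder argument is invoked. Both routes are sound; the paper's is slightly shorter by piggybacking on Claim~\ref{beta:claim1} and Claim~\ref{beta:zero}, while yours is more self-contained.

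One small remark: in Step~2 you assert $\beta''\to\mp\infty$, but all you need (and all that follows immediately from the sign of the right-hand side of~(\ref{gamma:ode})) is that $\beta''$ has the wrong sign at some point near $x_{**}$; the stronger limit is true but unnecessary.
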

\begin{proof}
We treat the two cases from Claim~\ref{beta:claim1}. In the first case, there exists a point $x_m \in (x_{**},x_*)$ so that $\beta'(x_m)=0$ and $\beta'' > 0$ on $(x_{**},x_*)$. By Claim~\ref{beta:zero}, we know that $\beta(x) <0$ when $x \in (x_{**},x_m)$ and $x \geq \sqrt{2}$. Also, if we let $M = -\beta(x_m)$, then $\beta(x) \geq - M$ when $x \in (x_{**},x_*)$. Now, for any $\varepsilon > 0$, there exists a constant $m_\varepsilon >0$ so that $\frac{1}{2}x -\frac{1}{x} > m_\varepsilon$ for $x \geq \sqrt{2} + \varepsilon$, and using~(\ref{gamma:ode}), we have $\beta'(x) > -\frac{M}{2m_\varepsilon}$ when $x \geq \sqrt{2} + \varepsilon$. Therefore, $|\beta(x)|$ and $|\beta'(x)|$ are uniformly bounded for $x \geq \sqrt{2} + \varepsilon$ and away from $x_*$.  By the existence theory for differential equations $x_{**} < \sqrt{2} + \varepsilon$. Taking $\varepsilon \to 0$, we have $x_{**} \leq \sqrt{2}$. To see that $x_{**} < \sqrt{2}$, suppose $x_{**} = \sqrt{2}$. Then there exists $z_{**} \in [-M, 0]$ so that $\lim_{x \to x_{**}} \beta(x) = z_{**}$. It follows that near the point $(x_{**}, z_{**})$ the curve $(x,\beta(x))$ can be written as $(\bar{\alpha}(z),z)$ where $\bar{\alpha}$ is a solution of~(\ref{alpha:eq}) with $\bar{\alpha}(z_{**}) = \sqrt{2}$ and  $\bar{\alpha}'(z_{**}) = 0$. By the uniqueness of solutions of~(\ref{alpha:eq}) we deduce that $\bar{\alpha}$ is the constant function $\bar{\alpha}(z) = \sqrt{2}$, which is a contradiction.

In the second case, $\beta'>0$ on $(x_{**},x_*)$. If $x_{**} = 0$, we are done. Otherwise, $x_{**} > 0$ and consequently $\lim_{x \to x_{**}} \beta'(x) = \infty$. In this case, $\beta''$ must be negative at some point, and arguing as in the proof of Claim~\ref{claim:ccd}, we see that $\beta'' <0$ near $x_{**}$. It follows from~(\ref{gamma:ode}) that $x_{**} < \sqrt{2}$ when $\beta < 0$ near $x_{**}$. On the other hand, if $\beta \geq 0$, then $|\beta|$ is uniformly bounded (since $\beta \leq z_*$) and arguing as we did in the first case, we see that $x_{**} < \sqrt{2}$.
\end{proof}

\begin{lemma}
\label{beta:bd}
$\lim_{x \to x_{**}} \beta(x) < \infty$.
\end{lemma}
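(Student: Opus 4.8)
The plan is to adapt the barrier argument used to prove Lemma~\ref{bd:blowup}, now localized near $x_{**}$ rather than near $x_*$. I would first dispose of the easy alternative from Claim~\ref{beta:claim1}: if $\beta' > 0$ on all of $(x_{**},x_*)$, then $\beta$ is increasing there, so $\beta(x) \le \beta(x_*) = z_*$ and hence $\lim_{x\to x_{**}}\beta(x) \le z_* < \infty$. The remaining case is the one where there is a point $x_m \in (x_{**},x_*)$ with $\beta'(x_m)=0$; then $\beta''>0$ on $(x_{**},x_*)$ and $\beta'<0$ on $(x_{**},x_m)$, so $\beta$ is decreasing on $(x_{**},x_m)$ and the limit in question is $\sup_{(x_{**},x_m)}\beta \in (\beta(x_m),+\infty]$, which we must bound above.

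To do this, fix $\hat x$ with $x_{**} < \hat x < \min\{\sqrt 2, x_m\}$ (possible since $x_{**}<\sqrt 2$ and $x_{**}<x_m$), put $f = -\beta' > 0$ on $(x_{**},\hat x)$ (so $f' = -\beta'' < 0$), and set $K = \tfrac1{\hat x} - \tfrac{\hat x}{2} > 0$. The key differential inequality I would establish is $f'' \ge -K f^2 f'$ on $(x_{**},\hat x)$, the analogue of the inequality $f'' \ge (\tfrac12 x - \tfrac1x) f^2 f'$ exploited in Lemma~\ref{bd:blowup}. This follows from~(\ref{gamma:eq:3}) applied to $\beta$: on $(x_{**},\hat x)\subset(0,\sqrt 2)$ all three terms on the right-hand side of~(\ref{gamma:eq:3}) are negative (since $\beta'<0$, $\beta''>0$, $\tfrac12 x - \tfrac1x <0$), so $\frac{\beta'''}{1+(\beta')^2} \le (\tfrac12 x - \tfrac1x)\beta'' \le (\tfrac{\hat x}{2} - \tfrac1{\hat x})\beta'' = -K\beta'' \le -K\tfrac{(\beta')^2}{1+(\beta')^2}\beta''$, using that $\tfrac12 x - \tfrac1x$ is increasing and $0 < \tfrac{(\beta')^2}{1+(\beta')^2}< 1$; clearing denominators and rewriting in terms of $f$ gives the inequality.

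With this in hand, the comparison proceeds as in Lemma~\ref{bd:blowup}. I would choose $M$ with $M^2 \ge \tfrac3{2K}$ and $M \ge f(\hat x)\sqrt{\hat x - x_{**}}$, and for small $\varepsilon>0$ set $g_\varepsilon(x) = M\big/\sqrt{x - (x_{**}+\varepsilon)}$ on $(x_{**}+\varepsilon,\hat x)$; a direct computation gives $g_\varepsilon'' = -\tfrac3{2M^2}g_\varepsilon^2 g_\varepsilon' \le -K g_\varepsilon^2 g_\varepsilon'$. Since $g_\varepsilon\to+\infty$ at the left endpoint while $f$ stays finite there, and $g_\varepsilon(\hat x)\ge f(\hat x)$, a positive interior maximum of $f-g_\varepsilon$ at a point $x'$ would force $f'(x')=g_\varepsilon'(x')<0$ and $0 \ge (f-g_\varepsilon)''(x') \ge -Kf'(x')\big(f(x')^2 - g_\varepsilon(x')^2\big) > 0$, a contradiction; hence $f\le g_\varepsilon$, and letting $\varepsilon\to0$ yields $-\beta'(x)\le M\big/\sqrt{x-x_{**}}$ on $(x_{**},\hat x)$. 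Integrating, $\beta(x) = \beta(\hat x) + \int_x^{\hat x} f \le \beta(\hat x) + 2M\sqrt{\hat x - x_{**}}$, so $\lim_{x\to x_{**}}\beta(x) < \infty$.

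The one point requiring genuine care — the only structural difference from Lemma~\ref{bd:blowup} — is that near $x_{**}$ the coefficient $\tfrac12 x - \tfrac1x$ is negative, so it cannot be discarded as a favorable term but must be retained and absorbed together with $\beta''$, which is exactly what produces the constant $K$ above. Everything else (verifying the barrier ODE for $g_\varepsilon$, the maximum-principle step, the passage $\varepsilon\to0$, and the final integration) is routine and parallels the earlier lemma; and the argument goes through verbatim whether $x_{**}>0$ or $x_{**}=0$, since $\tfrac12 x - \tfrac1x$ is increasing on all of $(0,\hat x)$.
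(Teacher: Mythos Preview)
Your proposal is correct and follows essentially the same approach as the paper: both derive the differential inequality $f'' \ge -Kf^2f'$ for $f=-\beta'$ from~(\ref{gamma:eq:3}) on an interval to the left of $\sqrt 2$, compare against a barrier of the form $M/\sqrt{x-(x_{**}+\varepsilon)}$ via the maximum principle, and then integrate. The only cosmetic difference is that the paper argues by contradiction (assuming $\beta\to\infty$ to produce a zero-crossing point $x_\ell<\sqrt 2$ as the right endpoint of the comparison interval), whereas you argue directly using an arbitrary $\hat x<\min\{\sqrt 2,x_m\}$; your version is marginally cleaner since it avoids invoking Claim~\ref{beta:zero}.
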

\begin{proof}
Suppose $\lim_{x \to x_{**}} \beta(x)  = \infty$. Since $\beta'>0$ near $x_*$, there exists a point $x_m \in (x_{**} ,x_* )$ so that $\beta'(x_m) = 0$. By Claim~\ref{beta:claim1}, we know that $\beta'' > 0$ and $\beta(x_m) < 0$. It follows that there exists a point $x_{\ell} \in (x_{**} ,x_m)$ so that $\beta(x_{\ell}) = 0$ with $\beta'(x_{\ell}) = -m < 0$. By Claim~\ref{beta:zero}, we know there exists $\delta>0$ so that $x_{\ell} + \delta < \sqrt{2}$, and in particular $\left(\frac{1}{2}x - \frac{1}{x} \right) \leq - \frac{1}{M}$ for some $M>0$, when $x \leq x_{\ell}$.

Let $f=-\beta'$. Then $f > 0$ when $x \in (x_{**}, x_{\ell})$ and $f'<0$. Using~(\ref{gamma:eq:3}), we have $$f'' \geq -\frac{1}{M}f'\cdot f^2,$$ when $x \in (x_{**}, x_{\ell})$. Fix $\varepsilon>0$, and let $$g_\varepsilon(x)= \frac{ m\sqrt{x_{\ell} - (x_{**} + \varepsilon) } + \sqrt{3M} }{\sqrt{x - (x_{**} + \varepsilon) }}.$$ Then $$g_\varepsilon'' = -\frac{3}{2} \frac{1}{(m\sqrt{x_{\ell} - (x_{**} + \varepsilon) } + \sqrt{3M})^2}g_\varepsilon'\cdot g_\varepsilon^2 \leq -\frac{1}{M}g_\varepsilon'\cdot g_\varepsilon^2,$$ for $x \in (x_{**}+ \varepsilon, x_{\ell})$.

Now, $g_\varepsilon(x_{\ell}) > f(x_{\ell})$ and $g_\varepsilon(x_{**}+\varepsilon) > f(x_{**}+\varepsilon)$. Suppose $f>g_\varepsilon$ at some point in $(x_{**}+\varepsilon, x_{\ell})$, then $f-g_\varepsilon$ must achieve a positive maximum in $(x_{**}+\varepsilon, x_{\ell})$. At such a point $$0 \geq (f-g_\varepsilon)'' \geq -\frac{1}{M}f'(f^2-g^2) > 0,$$ which is a contradiction. Therefore, $g_\varepsilon \geq f$. Taking $\varepsilon \to 0$, we have the estimate $$f(x) \leq \frac{ m\sqrt{x_{\ell} - x_{**} } + \sqrt{3M} }{\sqrt{x - x_{**} }} ,$$ for $x \in (x_{**}, x_{\ell})$. Integrating from $x$ to $x_{\ell}$, $$\beta(x) - \beta(x_{\ell}) \leq 2 \left( m\sqrt{x_{\ell} - x_{**} } + \sqrt{3M}  \right) \left( \sqrt{x_{\ell}-x_{**}} - \sqrt{x - x_{**}} \right).$$ Since $\beta(x_{\ell})=0$, we have $$\lim_{x \to x_{**} }\beta(x) \leq 2 \left( m\sqrt{x_{\ell} - x_{**} } + \sqrt{3M}  \right) \left( \sqrt{x_{\ell}-x_{**}} \right),$$ which is a contradiction.
\end{proof}

Now that we know $\beta$ is bounded from above, we can show that $x_{**} > 0$ when there exists $x_m \in (x_{**},x_*)$ so that $\beta'(x_m) = 0$.
\begin{claim}
\label{c3:sm:b}
Suppose there exists $x_m \in (x_{**},x_*)$ so that $\beta'(x_m) = 0$. Then $x_{**} > 0$.
\end{claim}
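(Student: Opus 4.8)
The plan is to argue by contradiction, assuming $x_{**}=0$, so that $\beta$ solves~(\ref{gamma:ode}) on all of $(0,x_*)$. By hypothesis there is a point $x_m\in(0,x_*)$ with $\beta'(x_m)=0$, and Claim~\ref{beta:claim1} then gives $\beta''>0$ on $(0,x_*)$ and $\beta(x_m)<0$; in particular $\beta'<0$ on $(0,x_m)$, so $\beta$ is strictly decreasing there. Combined with Lemma~\ref{beta:bd}, which bounds $\beta$ from above, this shows $\beta$ is bounded on $(0,x_m]$. I would then fix an auxiliary point $x_0\in(0,\min\{x_m,1\})$; since $\beta''>0$, $\beta'$ is increasing, so $\beta'(x)\le\beta'(x_0)=:-c_0<0$ for all $x\in(0,x_0)$, giving $|\beta'|\ge c_0$ on $(0,x_0)$, and I would set $K:=\sup_{(0,x_0)}|\beta|$, which lies in $(0,\infty)$ (finite by the above, positive because $\beta''>0$ makes $\beta$ strictly convex, hence non-constant).

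The core of the argument is a one-sided ODE comparison near the singularity $x=0$, exploiting that $\tfrac12 x-\tfrac1x\le-\tfrac1{2x}$ on $(0,1)$, so the term $\big(\tfrac12 x-\tfrac1x\big)\beta'$ is large and \emph{positive} there. Choosing $\delta_1:=\min\{x_0,\,c_0/(2K)\}>0$, so that $|\beta'|\ge c_0\ge 2Kx$ on $(0,\delta_1)$, one obtains from~(\ref{gamma:ode}) on $(0,\delta_1)$
\[
\beta''=\big(1+(\beta')^2\big)\Big[\big(\tfrac12 x-\tfrac1x\big)\beta'-\tfrac12\beta\Big] \ge \big(1+(\beta')^2\big)\Big(\frac{|\beta'|}{2x}-\frac K2\Big) \ge \frac{|\beta'|^3}{4x}.
\]
Writing $u:=-\beta'>0$, this says $u'\le -u^3/(4x)$, i.e.\ $\tfrac{d}{dx}\big(u^{-2}\big)\ge \tfrac1{2x}$. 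Integrating from $x$ to $\delta_1$ gives $u(x)^{-2}\le u(\delta_1)^{-2}-\tfrac12\log(\delta_1/x)$; the right-hand side tends to $-\infty$ as $x\to0^+$ while the left-hand side is positive, a contradiction. Hence $x_{**}>0$.

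The step I expect to require the most care is the differential inequality displayed above: it is legitimate only because we have arranged $|\beta'|$ to be bounded below on $(0,x_0)$, so the bounded term $\tfrac12\beta$ can be absorbed, and $\beta$ itself to be bounded above there — and both inputs rest on Claim~\ref{beta:claim1} and Lemma~\ref{beta:bd}. Once those bounds are in hand the remainder is a routine integration of the scalar inequality $u'\le -u^3/(4x)$. Conceptually this is the ``direct crossing'' mechanism mentioned in the introduction: the $1/x$ term makes $|\beta'|$ blow up faster than any reciprocal-square-root rate as $x\to0^+$, which is incompatible with $\beta$ (hence, through the equation, $\beta'$) staying finite down to $x=0$.
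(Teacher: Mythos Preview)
Your argument is correct. The key inputs---Claim~\ref{beta:claim1} (giving $\beta''>0$, $\beta'<0$ on $(0,x_m)$) and Lemma~\ref{beta:bd} (giving the upper bound on $\beta$)---are used exactly as needed, and the chain of inequalities leading to $u'\le -u^3/(4x)$ checks out: on $(0,1)$ one has $\tfrac12 x-\tfrac1x\le -\tfrac1{2x}$, so $(\tfrac12 x-\tfrac1x)\beta'\ge \tfrac{|\beta'|}{2x}$, and your choice of $\delta_1$ makes $\tfrac{|\beta'|}{2x}-\tfrac{K}{2}\ge \tfrac{|\beta'|}{4x}$, after which dropping the $1$ in $1+(\beta')^2$ gives the displayed bound. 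The integration of $\tfrac{d}{dx}(u^{-2})\ge \tfrac{1}{2x}$ then forces $u^{-2}$ negative near $0$, which is the contradiction.

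The paper takes a different route: rather than arguing by contradiction, it works directly with the angle $\theta(x)=\arctan\beta'(x)$ and integrates the identity
\[
\frac{d}{dx}\big(\log|\sin\theta(x)|\big)=\Big(\tfrac12 x-\tfrac1x\Big)-\frac{\beta}{2\beta'}
\]
from $x$ down to $x_{**}$, using $\sin\theta(x_{**})=-1$ and the upper bound $\beta<M$ from Lemma~\ref{beta:bd} to obtain an explicit positive lower bound for $x_{**}$ in terms of a chosen reference point $x_\varepsilon$. The paper's approach thus yields quantitative information (a formula for how small $x_{**}$ can be), at the cost of the slightly less transparent $\log|\sin\theta|$ substitution. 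Your contradiction argument is more elementary and isolates the mechanism more cleanly---the $1/x$ singularity forces $|\beta'|$ to blow up faster than $(\log(1/x))^{1/2}$---but gives no explicit bound. Since the claim only asks for $x_{**}>0$, either approach suffices.
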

\begin{proof}
If there exists $x_m \in (x_{**},x_*)$ so that $\beta'(x_m) = 0$, then $\beta''>0$ and there exists $\epsilon>0$ and $x_{\epsilon} \in (x_{**},x_m)$ so that $\frac{\beta'(x_{\epsilon})}{\sqrt{1 + \beta'(x_{\epsilon})^2}} = -\epsilon$. Also, by Lemma~\ref{beta:bd}, there exists $M \geq 0$ so that $\beta < M$.

Let $\theta(x) = \arctan \beta'(x)$. Then $$\frac{d}{dx} \left( \log \sin \theta(x) \right) = \frac{1}{2}x - \frac{1}{x} - \frac{\beta(x)}{2 \beta'(x)} \leq \frac{1}{2}x - \frac{1}{x} - \frac{M}{2 \beta'(x_\varepsilon)},$$ for $x \in (x_{**},x_\varepsilon)$. Integrating the inequality from $x$ to $x_{\epsilon}$: $$ \log \left( \frac{\sin \theta(x_\varepsilon)}{\sin \theta(x)} \right) \leq \frac{1}{4}(x_\varepsilon)^2 + \log \left( \frac{x}{x_\varepsilon} \right) + \frac{M x_\varepsilon}{2(-\beta'(x_\varepsilon))}.$$ Since $\sin \theta(x_\varepsilon) = \frac{\beta'(x_{\epsilon})}{\sqrt{1 + \beta'(x_{\epsilon})^2}} = -\epsilon$ and $\sin \theta(x_{**}) = -1$, we have $$\varepsilon \leq \left( \frac{x_{**}}{x_\varepsilon} \right) e^{\frac{1}{4}(x_\varepsilon)^2 + \frac{M x_\varepsilon}{2(-\beta'(x_\varepsilon))}},$$ which proves the claim.
\end{proof}

\begin{remark}
At this point, we can give a basic description of the $\beta$ curves: For $b>0$, let $(x_*^b,z_*^b)$ be the blow-up point of $\gamma_b$, and let $\beta_b$ be the unique solution of~(\ref{gamma:ode}) with $\beta_b(x_*^b) = z_*^b$ and $\lim_{x \to x_*^b} \beta_b'(x) = \infty$. Then there exists $x_{**}^b \in [0,\sqrt{2})$ so that $\beta_b$ is defined on $(x_{**}^b,x_*^b)$ and either $\beta_b$ blows-up as $x \to x_{**}^b$ or $x_{**}^b=0$. Also, $\beta_b$ is bounded from above, and if there exists $x_m^b$ for which $\beta_b'(x_m^b) = 0$, then $\beta_b''>0$ on $(x_{**}^b, x_*^b)$ and $x_{**}^b>0$.
\end{remark}

\subsection{A note on the blow-up of $\beta$ at $x_{**}$}

Now that we know the basic shape of $\beta$ we discuss the dependence of $\beta$ and $(x_{**},z_{**})$ on the initial height. As in Section~\ref{cfsb}, the following proposition is a consequence of the existence, uniqueness, and continuity theory for differential equations.

\begin{proposition}
\label{prop:beta:cont}
For $b>0$, let $\gamma_b$ denote the solution of~\emph{(\ref{gamma:ode})} with $\gamma_b(0) = b$ and $\gamma_b'(0) = 0$. Let $x^{b}_*$ denote the point where $\gamma_{b}$ blows-up, and let $z^b_* = \gamma_b(x^b_*)$.  Let $\beta_b$ denote the unique solution of~\emph{(\ref{gamma:ode})} with $\beta_b(x^b_*) = z_*^b$ and $\lim_{x \to x^b_*} \beta'(x) = \infty$, and let $x_{**}^b \in [0,x_*^b)$ be chosen so that $\beta_b$ is smooth on $(x_{**}^b, x_*^b)$ and either $\beta_b$ blows-up at $x_{**}^b$ or $x_{**}^b=0$

Fix $b_0>0$. Then, for $\varepsilon > 0$, there exists $\delta>0$ so that if $|b-b_0| < \delta$ and $b>0$, then $\beta_b(x)$ is defined on $[x^{b_0}_{**} + \varepsilon, x^{b_0}_* - \varepsilon]$. Moreover, $$|\beta_b(x) - \beta_{b_0}(x)| + |\beta_b'(x) - \beta_{b_0}'(x)| < \varepsilon,$$ for $x \in [x^{b_0}_{**} + \varepsilon, x^{b_0}_* - \varepsilon]$.
\end{proposition}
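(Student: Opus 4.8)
The difficulty in this proposition is that $\beta_b$ is singled out by the condition $\lim_{x\to x_*^b}\beta_b'(x)=\infty$, a datum imposed at the point $x_*^b$ where the first–order system attached to~(\ref{gamma:ode}) has a vertical tangent; the ODE continuity theorems cannot be fed data at $x_*^b$ directly. The plan is to move this datum into the $\alpha$–picture of Section~\ref{cfsb}, where it becomes the \emph{regular} Cauchy datum $\alpha_b(z_*^b)=x_*^b$, $\alpha_b'(z_*^b)=0$; propagate it continuously (via Corollary~\ref{gamma:blowup:pt} and Proposition~\ref{prop:alpha:cont}) down to a fixed level $z=z_1<z_*^{b_0}$ at which the curve has finite nonzero slope; and then run the usual continuous–dependence machinery for~(\ref{gamma:ode}) in the $x$–picture starting from that level. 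We may assume $\varepsilon$ is as small as convenient (for larger $\varepsilon$ the $\delta$ obtained for a small $\varepsilon$ also works), in particular small enough that $[x_{**}^{b_0}+\varepsilon,\,x_*^{b_0}-\varepsilon]$ is a nonempty compact subinterval of $(x_{**}^{b_0},x_*^{b_0})$, which is legitimate because $x_{**}^{b_0}<\sqrt2<x_*^{b_0}$.

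\emph{Step 1: a uniform monotone window for $\alpha_b$.} By Corollary~\ref{gamma:blowup:pt}, $x_*^b\to x_*^{b_0}>\sqrt2$ and $z_*^b\to z_*^{b_0}$, and by Proposition~\ref{prop:alpha:cont} there is a fixed $\rho>0$ such that all nearby $\alpha_b$ are defined on $[z_*^{b_0}-\rho,z_*^{b_0}+\rho]$ and $C^1$–close to $\alpha_{b_0}$ there. Since $\alpha_b''=(1+(\alpha_b')^2)\bigl[\,(\tfrac1{\alpha_b}-\tfrac12\alpha_b)+\tfrac12 z\alpha_b'\,\bigr]$, and near $z_*^{b_0}$ one has $\alpha_b$ close to $x_*^{b_0}>\sqrt2$ while $\alpha_b'$ is close to $0$, the bracket is strictly negative on $[z_*^{b_0}-\rho_1,z_*^{b_0}+\rho_1]$ for some fixed $\rho_1\in(0,\rho]$ and all $b$ near $b_0$ (this uses the \emph{strict} inequality $x_*^{b_0}>\sqrt2$). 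Hence $\alpha_b''<0$ there; combined with $\alpha_b'(z_*^b)=0$ and $z_*^b$ lying inside this window, this forces $\alpha_b'>0$ on $[z_*^{b_0}-\rho_1,z_*^b)$, so every nearby $\alpha_b$ is strictly increasing on $[z_*^{b_0}-\rho_1,z_*^b]$.

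\emph{Step 2: a genuine Cauchy datum for $\beta_b$.} Fix $z_1\in(z_*^{b_0}-\rho_1,z_*^{b_0})$ close enough to $z_*^{b_0}$ that $\hat x:=\alpha_{b_0}(z_1)\in(x_*^{b_0}-\varepsilon,x_*^{b_0})$, and set $\mu:=\alpha_{b_0}'(z_1)>0$. For $|b-b_0|$ small we have $z_1<z_*^b$, so by Step 1 the map $\alpha_b$ is an increasing bijection of $[z_1,z_*^b]$ onto $[\hat x_b,x_*^b]$ with $\hat x_b:=\alpha_b(z_1)$; thus the curve $(\alpha_b(z),z)$ over this range is the graph of $\beta_b$ over $[\hat x_b,x_*^b)$ (it is a solution of~(\ref{gamma:ode}) with the two boundary values at $x_*^b$ that, by the uniqueness recalled in Section~\ref{cfsb}, determine $\beta_b$), so $\beta_b(\hat x_b)=z_1$ and $\beta_b'(\hat x_b)=1/\alpha_b'(z_1)$. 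By the $C^1$–closeness of $\alpha_b$ to $\alpha_{b_0}$ at the fixed level $z_1$, we get $\hat x_b\to\hat x$ and $\bigl(\beta_b(\hat x_b),\beta_b'(\hat x_b)\bigr)\to\bigl(z_1,1/\mu\bigr)=\bigl(\beta_{b_0}(\hat x),\beta_{b_0}'(\hat x)\bigr)$ as $b\to b_0$.

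\emph{Step 3: continuous dependence in the $x$–picture.} Rewrite~(\ref{gamma:ode}) as a first–order system for $(\beta,\beta')$; on any compact set of $x$–values bounded away from $0$ its right–hand side is real–analytic and nonsingular. Let $K$ be a fixed compact subinterval of $(x_{**}^{b_0},x_*^{b_0})$ containing $[x_{**}^{b_0}+\varepsilon,\,x_*^{b_0}-\varepsilon]$ together with a neighborhood of $\hat x$; on $K$ the solution $\beta_{b_0}$ and its derivative are bounded. Applying the standard continuous–dependence theorem (with the base point $\hat x_b$, which converges to $\hat x\in K$, allowed to vary) to the solutions of this system with base point $\hat x_b$ and data converging to the data of $\beta_{b_0}$ at $\hat x$: for $b$ near $b_0$ the solution $\beta_b$ extends over all of $K$ — in particular it does not blow up before reaching $x_{**}^{b_0}+\varepsilon$, so it is defined on $[x_{**}^{b_0}+\varepsilon,\,x_*^{b_0}-\varepsilon]$ — and $|\beta_b-\beta_{b_0}|+|\beta_b'-\beta_{b_0}'|<\varepsilon$ throughout $K$, hence on $[x_{**}^{b_0}+\varepsilon,\,x_*^{b_0}-\varepsilon]$. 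The substantive obstacle is the "condition at infinity" defining $\beta_b$, which forces the detour through the $\alpha$–picture; the accompanying technical point is Step 1, namely producing a single interval $[z_1,z_*^b]$ on which \emph{every} nearby $\alpha_b$ is monotone, so that the transfer in Step 2 to a bona fide Cauchy datum for $\beta_b$ is valid, and this is exactly where $x_*^{b_0}>\sqrt2$ (giving $\alpha_b''<0$ near $z_*^b$ uniformly in $b$) enters.
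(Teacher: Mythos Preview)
Your proof is correct and follows exactly the route the paper indicates: the paper does not write out a proof of this proposition at all, saying only that ``as in Section~\ref{cfsb}, the following proposition is a consequence of the existence, uniqueness, and continuity theory for differential equations,'' and your Steps~1--3 are precisely the $\alpha$--picture detour of Section~\ref{cfsb} (Proposition~\ref{prop:alpha:cont} and Corollary~\ref{gamma:blowup:pt}) followed by standard continuous dependence on the regular first--order system, i.e.\ the argument the paper gestures at but omits.
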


Applying Proposition~\ref{prop:beta:cont}, we have the following continuity result.

\begin{proposition}
\label{beta:blowup:pt}
Fix $b_0 > 0$. Suppose there exists $x_m^{b_0} \in (x_{**}^{b_0},x_*^{b_0})$ so that $\beta_{b_0}'(x_m^{b_0}) = 0$ and hence $x_{**}^{b_0} >0$. Suppose $\beta_{b_0}(x_{**}^{b_0}) = z_{**}^{b_0}$, where $|z_{**}^{b_0}| < \infty$. Then, for $\varepsilon > 0$, there exists $\delta>0$, so that if $|b-b_0| < \delta$ and $b >0$, then the solution $\beta_b$ blows-up at the point $x_{**}^b>0$ with $\beta_b(x_{**}^b) = z_{**}^b$ and $|z_{**}^b| < \infty$. Furthermore, if $|b-b_0| < \delta$ and $b>0$, then $|x^b_{**} - x^{b_0}_{**}| < \varepsilon$ and $|z^b_{**} - z^{b_0}_{**}| < \varepsilon$.
\end{proposition}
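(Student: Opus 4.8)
The plan is to reduce this statement, exactly as in the proof of Corollary~\ref{gamma:blowup:pt}, to continuous dependence on initial conditions for the curve written in $\alpha$-form near the blow-up point $(x_{**}^{b_0},z_{**}^{b_0})$.

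First I would record the local picture of $\beta_{b_0}$ near $x_{**}^{b_0}$. Since $x_m^{b_0}$ exists, Claim~\ref{beta:claim1} gives $\beta_{b_0}''>0$ on $(x_{**}^{b_0},x_*^{b_0})$, so $\beta_{b_0}'<0$ on $(x_{**}^{b_0},x_m^{b_0})$; as $\beta_{b_0}$ blows up at $x_{**}^{b_0}$ while $\beta_{b_0}(x_{**}^{b_0})=z_{**}^{b_0}$ is finite, we get $\beta_{b_0}'(x)\to-\infty$ as $x\to x_{**}^{b_0}$. Hence near $(x_{**}^{b_0},z_{**}^{b_0})$ the curve $(x,\beta_{b_0}(x))$ is a graph $(\bar\alpha_{b_0}(z),z)$, where $\bar\alpha_{b_0}$ solves~(\ref{alpha:eq}) with $\bar\alpha_{b_0}(z_{**}^{b_0})=x_{**}^{b_0}$ and $\bar\alpha_{b_0}'(z_{**}^{b_0})=0$. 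By the existence theory $\bar\alpha_{b_0}$ continues, and stays positive, on an interval $[z_{**}^{b_0}-\sigma,z_{**}^{b_0}+\sigma]$; since $x_{**}^{b_0}<\sqrt2$ we have the nondegeneracy $\bar\alpha_{b_0}''(z_{**}^{b_0})=\frac{1}{x_{**}^{b_0}}-\frac{x_{**}^{b_0}}{2}>0$, so after shrinking $\sigma$ we may assume $\bar\alpha_{b_0}'>0$ on $(z_{**}^{b_0},z_{**}^{b_0}+\sigma]$.

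Next I would fix a height $z_1\in(\beta_{b_0}(x_m^{b_0}),z_{**}^{b_0})$, chosen close enough to $z_{**}^{b_0}$ that $z_1>z_{**}^{b_0}-\sigma$ and that $x_1:=\bar\alpha_{b_0}(z_1)\in(x_{**}^{b_0},x_m^{b_0})$ satisfies $|x_1-x_{**}^{b_0}|<\varepsilon$ and $|z_1-z_{**}^{b_0}|<\varepsilon$; this is possible because $\bar\alpha_{b_0}(z_1)\to x_{**}^{b_0}$ as $z_1\to z_{**}^{b_0}$. Note that $\bar\alpha_{b_0}=\beta_{b_0}^{-1}$ on $[z_1,z_{**}^{b_0})$ with $\bar\alpha_{b_0}'=1/\beta_{b_0}'<0$ there, so $z_{**}^{b_0}$ is the unique zero of $\bar\alpha_{b_0}'$ on $[z_1,z_{**}^{b_0}+\sigma]$. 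Since $x_1>x_{**}^{b_0}$, Proposition~\ref{prop:beta:cont} (applied with its parameter smaller than $\frac12(x_1-x_{**}^{b_0})$) gives, for $b$ near $b_0$, that $\beta_b$ is defined near $x_1$ with $\beta_b,\beta_b'\to\beta_{b_0},\beta_{b_0}'$ uniformly there. As $\beta_{b_0}'(x_1)<0$, the point $x_1^b$ with $\beta_b(x_1^b)=z_1$ exists, $x_1^b\to x_1$, and $\beta_b'(x_1^b)\to\beta_{b_0}'(x_1)<0$; hence near $x_1^b$ the curve $(x,\beta_b(x))$ is a graph $(\bar\alpha_b(z),z)$ with $\bar\alpha_b$ a solution of~(\ref{alpha:eq}) satisfying $\bar\alpha_b(z_1)=x_1^b$ and $\bar\alpha_b'(z_1)=1/\beta_b'(x_1^b)$, and $(\bar\alpha_b(z_1),\bar\alpha_b'(z_1))\to(x_1,\bar\alpha_{b_0}'(z_1))$.

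Finally I would invoke continuous dependence on initial conditions for~(\ref{alpha:eq}) at the regular base point $z_1$: for $b$ near $b_0$, $\bar\alpha_b$ is defined and positive on $[z_1,z_{**}^{b_0}+\sigma]$ with $\bar\alpha_b\to\bar\alpha_{b_0}$ and $\bar\alpha_b'\to\bar\alpha_{b_0}'$ uniformly there. Since $\bar\alpha_b'(z_1)<0<\bar\alpha_b'(z_{**}^{b_0}+\sigma)$, $\bar\alpha_b'$ vanishes somewhere, and at any zero $z'$ equation~(\ref{alpha:eq}) gives $\bar\alpha_b''(z')=\frac{1}{\bar\alpha_b(z')}-\frac{\bar\alpha_b(z')}{2}>0$, because $\bar\alpha_b(z')$ is close to $x_{**}^{b_0}<\sqrt2$; by the first-contact argument of Claim~\ref{claim:ccd} this zero is unique, so call it $z_{**}^b$. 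Uniform convergence together with the fact that $z_{**}^{b_0}$ is the unique, nondegenerate zero of $\bar\alpha_{b_0}'$ on $[z_1,z_{**}^{b_0}+\sigma]$ forces $z_{**}^b\to z_{**}^{b_0}$, and then $x_{**}^b:=\bar\alpha_b(z_{**}^b)\to x_{**}^{b_0}$; in particular $x_{**}^b>0$ and $|x_{**}^b-x_{**}^{b_0}|<\varepsilon$, $|z_{**}^b-z_{**}^{b_0}|<\varepsilon$ for $b$ close to $b_0$. Because $\bar\alpha_b'<0$ on $[z_1,z_{**}^b)$, the arc $(\bar\alpha_b(z),z)$ for $z\in[z_1,z_{**}^b]$ is precisely the graph of $\beta_b$ over $[x_{**}^b,x_1^b]$, which is smooth on $(x_{**}^b,x_1^b]$ and has $\beta_b(x)\to z_{**}^b$ (finite) and $\beta_b'(x)\to-\infty$ as $x\to x_{**}^b$; by uniqueness of solutions, this $x_{**}^b$ is the blow-up point of the statement and $z_{**}^b=\beta_b(x_{**}^b)$. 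The main obstacle is the choice of the anchor height $z_1$: it must be close enough to $z_{**}^{b_0}$ for both error bounds and for $\bar\alpha_{b_0}$'s behaviour near $z_{**}^{b_0}$ to govern $\bar\alpha_b$, yet it must keep $x_1>x_{**}^{b_0}$ so that Proposition~\ref{prop:beta:cont} is available at $x_1$; the nondegeneracy $\frac{1}{x_{**}^{b_0}}-\frac{x_{**}^{b_0}}{2}>0$, which rests on $x_{**}^{b_0}<\sqrt2$, is exactly what promotes ``$\bar\alpha_b'$ has a zero'' to ``$\bar\alpha_b'$ has a unique zero near $z_{**}^{b_0}$''.
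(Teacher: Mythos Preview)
Your proposal is correct and follows essentially the same route as the paper: rewrite the curve near $(x_{**}^{b_0},z_{**}^{b_0})$ in $\bar\alpha$-coordinates as a solution of~(\ref{alpha:eq}), feed in continuous dependence of $\beta_b$ from Proposition~\ref{prop:beta:cont} at a nearby regular point, and then use standard continuous dependence for~(\ref{alpha:eq}) together with the nondegeneracy $\bar\alpha_{b_0}''(z_{**}^{b_0})>0$ (from $x_{**}^{b_0}<\sqrt2$) to locate the nearby turning point. The paper's proof is a two-sentence sketch of exactly this; you have simply supplied the details it omits, in particular the anchoring at $z_1$ and the uniqueness of the zero of $\bar\alpha_b'$.
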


\begin{proof}
Since $x_{**}^{b_0} >0$ and $|z_{**}^{b_0}| < \infty$, we can continue $(x,\beta_{b_0}(x))$ past the blow-up point $(x_{**}^{b_0},z_{**}^{b_0})$ along a curve $(\bar{\alpha}_{b_0}(z),z))$ just as we did for $(x,\gamma(x))$ at the blow-up point $(x_*,z_*)$. Using Proposition~\ref{prop:beta:cont} we can show that the $\bar{\alpha}$ curves depend continuously on the initial height. In particular for $b$ in a neighborhood of $b_0$, the blow-up points $(x_{**}^b,z_{**}^b)$ will exist and depend continuously on $b$.
\end{proof}

\subsection{Behavior of $\beta$ for small $b>0$}

Fix $b \in (0, \bar{b}]$, and let $\gamma$ be the solution of~(\ref{gamma:ode}) with $\gamma(0) = b$ and $\gamma'(0) = 0$. Let $x_*$ denote the point where $\gamma$ blows-up, and let $z_* = \gamma(x_*)$. Also, let $\beta$ denote the unique solution of~(\ref{gamma:ode}) with $\beta(x_*) = z_*$ and $\lim_{x \to x_*} \beta'(x) = \infty$. We know there is a point $x_{**} \in [0,\sqrt{2})$ so that $\beta$ is defined on $(x_{**},x_*)$ and either blows-up as $x \to x_{**}$ or $x_{**}=0$. We also know that $$x_*^b \geq \sqrt{ \log{\frac{2}{\pi b^2}} }$$ and $$\frac{-12}{ \sqrt{\log{\frac{2}{\pi b^2}}} } \leq z_*^b < 0.$$

\begin{claim}
\label{c1:sm:b}
Suppose $x_* \geq 4$. Then there exists a point $x_m \in [x_* - 2, x_*)$ so that $\beta'(x_m) = 0$.
\end{claim}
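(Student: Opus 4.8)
The plan is to argue by contradiction. Suppose $\beta'$ has no zero on $[x_*-2,x_*)$. Since $\beta'$ is continuous there and $\lim_{x\to x_*}\beta'(x)=\infty$, the intermediate value theorem forces $\beta'>0$ on all of $[x_*-2,x_*)$. Hence $\beta$ is strictly increasing on $[x_*-2,x_*)$, so $\beta(x)<\beta(x_*)=z_*<0$; and since $x\geq x_*-2\geq 2>\sqrt2$, equation~(\ref{gamma:ode}) gives $\beta''>0$ on $[x_*-2,x_*)$. In particular $\beta'$ is increasing on $[x_*-2,x_*)$ and $\beta'(x_*-2)>0$.

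First I would estimate how fast $\beta'$ blows up at $x_*$. From~(\ref{gamma:ode}) and $\beta<0$ we get $\beta''\geq(1+(\beta')^2)\bigl(\tfrac12x-\tfrac1x\bigr)\beta'\geq\tfrac12(x-1)(\beta')^3$ on $[x_*-2,x_*)$ (using $\tfrac12x-\tfrac1x\geq\tfrac12(x-1)$ for $x\geq2$), so $\tfrac{d}{dx}\bigl(-\tfrac1{2(\beta')^2}\bigr)\geq\tfrac12(x-1)$; integrating from $x$ to $x_*$ and using $\beta'(x)\to\infty$ yields $\beta'(x)\leq\sqrt2\bigl((x_*-1)^2-(x-1)^2\bigr)^{-1/2}$ for $x\in[x_*-2,x_*)$. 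Integrating once more, $\int_{x_*-2}^{x_*}\beta'\leq\sqrt2\bigl(\tfrac\pi2-\arcsin\tfrac{x_*-3}{x_*-1}\bigr)\leq\tfrac{\sqrt2\,\pi}{3}$ when $x_*\geq4$, so $\beta$ stays uniformly bounded on $[x_*-2,x_*)$: $z_*-\tfrac{\sqrt2\,\pi}{3}\leq\beta(x)<z_*<0$. (This also shows $\beta'(x_*-2)$ is small, $\beta'(x_*-2)\leq\bigl(2(x_*-2)\bigr)^{-1/2}$.)

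Then I would obtain the contradiction by a comparison argument, in the spirit of the auxiliary-function comparisons used for the $\gamma$ curve. Let $\sigma(x)=z_*-\sqrt{4-(x-x_*+2)^2}$ be the lower-left arc of the circle of radius $2$ through $(x_*,z_*)$ with vertical tangent there, so $\sigma(x_*)=z_*=\beta(x_*)$, $\sigma'(x_*^-)=+\infty=\beta'(x_*^-)$, $\sigma'(x_*-2)=0$, and $\sigma$ has constant curvature $\tfrac12$. The blow-up analysis of the previous step shows $\beta'(x)\sqrt{x_*-x}\to(2A)^{-1/2}$ as $x\to x_*$, where $A=\tfrac12x_*-\tfrac1{x_*}$, while $\sigma'(x)\sqrt{x_*-x}\to1$; since $2A=x_*-2/x_*>1$, we conclude $\beta'<\sigma'$ for $x$ slightly less than $x_*$. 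The goal is to propagate $\beta'<\sigma'$ down to $x_*-2$: if the set $\{x<x_*:\beta'(x)=\sigma'(x)\}$ is empty we get $\beta'(x_*-2)\leq\sigma'(x_*-2)=0$ directly; otherwise let $x'$ be its supremum, so $\beta'(x')=\sigma'(x')$, $\beta'<\sigma'$ on $(x',x_*)$, hence $\beta''(x')\leq\sigma''(x')$. On the other hand~(\ref{gamma:ode}) gives $\beta''(x')-\sigma''(x')=(1+\beta'(x')^2)\bigl[(\tfrac12x'-\tfrac1{x'})\beta'(x')-\tfrac12\beta(x')-\tfrac12\sqrt{1+\beta'(x')^2}\bigr]$, and using $\tfrac12x'-\tfrac1{x'}\geq\tfrac12(x'-1)\geq\tfrac12$ (since $x'\geq x_*-2\geq2$), $-\tfrac12\beta(x')>0$, and $\sqrt{1+\beta'(x')^2}\leq1+\beta'(x')$, this bracket should be positive — giving $\beta''(x')>\sigma''(x')$, a contradiction. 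Hence $\beta'<\sigma'$ on $(x_*-2,x_*)$, so $\beta'(x_*-2)\leq\sigma'(x_*-2)=0$, contradicting $\beta'(x_*-2)>0$.

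The hard part is the pointwise step $\beta''(x')>\sigma''(x')$: when $x'$ is close to $x_*-2$ and $x_*$ is close to the threshold $4$, the coefficient $\tfrac12x'-\tfrac1{x'}-\tfrac12$ is small, so the crude bound $\sqrt{1+\beta'(x')^2}\leq1+\beta'(x')$ is not enough and one must instead feed in the uniform lower bound on $-\tfrac12\beta$ from the second step (and, on the portion of the curve nearest the turning point, possibly compare against a circular arc of slightly smaller radius, which turns at some $x_*-r$ with $r<2$, and iterate). Getting the constants to close at $x_*=4$ is the delicate point; the remaining sub-case, in which $\beta'$ does vanish at some $x_m<x_*-2$, is handled the same way, after first checking that such an $x_m$ must exceed $\sqrt2$ so that the comparison interval $(x_m,x_*)$ stays in $\{x>\sqrt2\}$ where $\tfrac12x-\tfrac1x$ has the right sign. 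Everything else is routine.
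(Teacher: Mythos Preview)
Your setup and the blow-up estimate in the second paragraph are fine, but the circle comparison in the third paragraph has a genuine gap that you yourself flag: at a contact point $x'$ the bracket
\[
\Bigl(\tfrac12 x'-\tfrac{1}{x'}\Bigr)\beta'(x')-\tfrac12\beta(x')-\tfrac12\sqrt{1+\beta'(x')^2}
\]
need not be positive. The only available lower bound for $-\tfrac12\beta(x')$ is $-\tfrac12 z_*$, and in the regime where the claim is applied (small initial height $b$, hence $x_*$ large) Proposition~\ref{gamma:small:height} forces $z_*$ to be close to $0$. So when $x'$ is near $x_*-2$ (where $\beta'(x')=\sigma'(x')$ is small and $\tfrac12 x'-\tfrac{1}{x'}-\tfrac12$ is small) the bracket is essentially $-\tfrac12 z_*-\tfrac12<0$. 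Your proposed fixes do not help: the ``lower bound on $-\tfrac12\beta$ from the second step'' is actually an \emph{upper} bound (you bounded $\beta$ from below, not above), and shrinking the comparison radius and iterating still requires the same pointwise sign at each contact, which fails for the same reason. The ``remaining sub-case'' is also not a separate case: if $\beta'$ vanishes at some $x_m<x_*-2$, then by Claim~\ref{beta:claim1} one has $\beta''>0$, hence $\beta'>0$ on $[x_*-2,x_*)$, which is exactly your contradiction hypothesis.

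The paper's proof avoids all of this by going one derivative higher. On $[x_*-2,x_*)$ every term on the right side of~(\ref{gamma:eq:3}) is positive (since $\beta'>0$, $\beta''>0$, and $x>\sqrt2$), so $\beta'''>0$. Thus $\beta''(x)\geq\beta''(x_*-2)\geq-\tfrac12\beta(x_*-2)$, and two more integrations give
\[
\beta(x)\ \geq\ \beta(x_*-2)\Bigl[1-\tfrac14\bigl(x-(x_*-2)\bigr)^2\Bigr],
\]
which at $x=x_*$ yields $\beta(x_*)\geq 0$, contradicting $\beta(x_*)=z_*<0$. The third-derivative inequality is the missing idea: it turns the problem into a simple Taylor estimate and requires no quantitative control on $z_*$.
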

\begin{proof}
Suppose $\beta'(x) > 0$ for $x \in [x_*-2,x_*)$. When $x_* \geq 4$, we have $x_* - 2 \geq \sqrt{2}$ and $\beta(x_*) < 0$. Using~(\ref{gamma:ode}), we see that $\beta'' > 0$ in $[x_*-2,x_*)$ and $\beta''(x_*-2) \geq -\frac{1}{2} \beta(x_*-2)$. Then, using~(\ref{gamma:eq:3}), we have $\beta''' > 0$ in $[x_*-2,x_*)$. Integrating from $x_*-2$ to $x$, we get $$\beta(x) \geq \beta(x_*-2) \left[ 1 - \frac{1}{4} \left( x - (x_*-2) \right)^2 \right],$$ for $x \in [x_*-2,x_*)$. This tells us that $\beta(x_*) \geq 0$, which is a contradiction.
\end{proof}

Let $\bar{b} >0$ be given as in the conclusion of Proposition~\ref{gamma:small:height} so that if $b \in (0,\bar{b}]$, then $\gamma \leq 0$ for $x\in[2\sqrt{2}, x_*)$. We also assume that $\bar{b}$ is chosen so small that $x_m > 2\sqrt{2}$ and $z_* \geq -\frac{1}{8} \geq -\frac{3\sqrt{2}}{4}$ when $b \in (0, \bar{b}]$. (It suffices to choose $\bar{b} < \frac{\sqrt{2}}{\sqrt{\pi} e^{4608}}$.)

\begin{lemma}
If $b \in (0,\bar{b}]$, then $2 z_{*} \leq \beta(x) < 0$ for $x \in [2 \sqrt{2}, x_*]$.
\end{lemma}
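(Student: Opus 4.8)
The plan is to show that on the interval $[2\sqrt2, x_*]$ the curve $\beta$ is trapped between the horizontal line $z = 2z_*$ below and $z=0$ above, using the concavity of $\beta$ established in the previous subsection together with the small-height estimates on $(x_*,z_*)$. First I would note that since $b\in(0,\bar b]$ we have $x_*\geq 4$ (indeed $x_*\geq\sqrt{\log(2/\pi b^2)}$ is huge), so Claim~\ref{c1:sm:b} applies and gives a point $x_m\in[x_*-2,x_*)$ with $\beta'(x_m)=0$; then Claim~\ref{beta:claim1} gives $\beta''>0$ on all of $(x_{**},x_*)$, and $\beta(x_m)<0$. With $\bar b$ chosen small enough that $x_m>2\sqrt2$, convexity of $\beta$ on $[2\sqrt2,x_*]$ means $\beta$ is decreasing on $[2\sqrt2,x_m]$ and increasing on $[x_m,x_*]$, so its maximum over $[2\sqrt2,x_*]$ is attained at an endpoint, namely $\max\{\beta(2\sqrt2),\beta(x_*)\}=\max\{\beta(2\sqrt2),z_*\}<0$ — here I use that $\beta(2\sqrt2)<0$, which follows from Claim~\ref{beta:zero} since $2\sqrt2\in(x_{**},x_m)$ and $2\sqrt2\geq\sqrt2$. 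This establishes the upper bound $\beta(x)<0$.

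For the lower bound I would argue that the minimum of $\beta$ on $[2\sqrt2,x_*]$ is $\beta(x_m)$, so it suffices to show $\beta(x_m)\geq 2z_*$. The natural device is a barrier/comparison on the interval $[x_m,x_*]$ where $\beta$ is increasing and convex: integrate the slope estimate coming from~(\ref{gamma:ode}). On $[x_m,x_*]$ one has $x\geq x_m>2\sqrt2$, so $\frac12 x-\frac1x$ is bounded below by a positive constant, and since $\beta<0$ there, the right-hand side of~(\ref{gamma:ode}) controls $\beta''$ in terms of $\beta'$ and $\beta$; together with the fact that $\beta'\to\infty$ at $x_*$ this forces $\beta$ to climb from $\beta(x_m)$ up to $z_*$ over the length-$\leq 2$ interval $[x_m,x_*]$. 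Concretely, I expect one can show $z_* - \beta(x_m) = \int_{x_m}^{x_*}\beta'\,dx$ is small enough — comparing with an ODE of the type used in Lemma~\ref{bd:blowup} — that $\beta(x_m)\geq z_* + z_* = 2z_*$ (recall $z_*<0$, so $2z_*<z_*$, and we only need the drop $z_*-\beta(x_m)\leq -z_* = |z_*|$). This is exactly where the choice $z_*\geq -\tfrac18$ (equivalently $\bar b<\tfrac{\sqrt2}{\sqrt\pi e^{4608}}$) and $x_m>2\sqrt2$ are used: they make the length and the coefficient estimates favorable enough to absorb the constants.

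The main obstacle is the lower-bound step: getting a clean, quantitatively sufficient estimate on $\int_{x_m}^{x_*}\beta'$ (equivalently on how far $\beta$ can dip below $z_*$ before turning around). The delicate point is that $x_m$ is only known to lie in $[x_*-2,x_*)$, so the interval can be as long as $2$, and one must combine the lower bound $x_m>2\sqrt2$ on the coefficient $\tfrac12 x-\tfrac1x$ with the upper bound $|z_*|\leq\tfrac18$ on the target height; a comparison function of the form $g(x)=c/\sqrt{x_*-x}$ (as in Lemma~\ref{bd:blowup}), chosen with $c$ calibrated to the coefficient bound and matched at $x_m$, should give $|\beta'(x)|$ a ceiling whose integral over $[x_m,x_*]$ is at most $|z_*|$. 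Everything else — the upper bound, the location of $x_m$, the convexity — is immediate from the already-established claims, so once the barrier constants are pinned down the lemma follows.
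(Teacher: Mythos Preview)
Your upper-bound argument ($\beta<0$ on $[2\sqrt2,x_*]$) is correct and essentially what the paper uses. The gap is in the lower bound $\beta\ge 2z_*$.

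A barrier of the form $g(x)=M/\sqrt{x_*-x}$, run exactly as in Lemma~\ref{bd:blowup}, does give $\beta'(x)\le M/\sqrt{x_*-x}$ on $[x_m,x_*)$ provided $M^2\ge 3/(2m)$, where $m$ is a lower bound for $\tfrac12 x-\tfrac1x$ on that interval. Integrating yields $z_*-\beta(x_m)\le 2M\sqrt{x_*-x_m}\le 2\sqrt2\,M$. But this bound is tied to the ODE coefficients, not to $|z_*|$: even with the most favorable choice $m\sim\tfrac12 x_*$ one gets $M\gtrsim\sqrt{3/x_*}$, hence a drop of order $1/\sqrt{x_*}$. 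You need the drop to be at most $|z_*|$, and Proposition~\ref{gamma:small:height} gives only an \emph{upper} bound $|z_*|\le 12/\sqrt{\log(2/\pi b^2)}$; there is no lower bound preventing $|z_*|$ from being far smaller than $1/\sqrt{x_*}$. So the constants cannot close, and no choice of $\bar b$ rescues the argument: as $b\to 0$ the target $|z_*|$ shrinks faster than the barrier estimate does. The factor $2$ in $\beta\ge 2z_*$ is sharp (equality at $x=x_*$), and a generic comparison cannot produce it.

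The paper instead exploits the symmetry between the two branches by setting $\delta(x)=\gamma(x)+\beta(x)$. A third-order Taylor expansion of the connecting curve $\alpha(z)$ at $z_*$ shows that for $x$ just below $x_*$, if $\alpha(z_*+t)=x=\alpha(z_*-s)$ with $s,t>0$ then $t>s$, hence $\delta(x)>2z_*$ near $x_*$. Then a maximum-principle argument: if $\delta$ ever hit $2z_*$ on $[2\sqrt2,x_*)$ it would have a negative interior maximum at some $\bar x$, where $\gamma'(\bar x)=-\beta'(\bar x)$, so $1+\gamma'(\bar x)^2=1+\beta'(\bar x)^2$ and summing~(\ref{gamma:ode}) for $\gamma$ and $\beta$ gives $\delta''(\bar x)/(1+\gamma'(\bar x)^2)=-\tfrac12\delta(\bar x)>0$, contradicting $\delta''(\bar x)\le 0$. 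Since $\gamma\le 0$ on $[2\sqrt2,x_*]$, one concludes $\beta=\delta-\gamma\ge\delta\ge 2z_*$. The missing idea in your sketch is precisely this pairing of $\beta$ with $\gamma$ rather than with an explicit barrier.
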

\begin{proof}
Let $\alpha(z)$ denote the curve that connects $\gamma$ and $\beta$. Then $\alpha$ is a solution of~(\ref{alpha:eq}) with $\alpha(z_*) = x_*$ and $\alpha'(z_*) = 0$. Using~(\ref{alpha:eq}) we have $\alpha''(z_*) = \frac{1}{x_*} - \frac{1}{2} x_*$ and $\alpha'''(z_*) = \frac{1}{2}z_*(\frac{1}{x_*} - \frac{1}{2} x_*)$ so that $$\alpha(z) = x_* + \frac{1}{2} \left( \frac{1}{x_*}\ - \frac{1}{2}x_* \right) (z-z_*)^2 + \frac{1}{12} z_* \left( \frac{1}{x_*}\ - \frac{1}{2}x_* \right) (z-z_*)^3 + \mathcal{O}(|z-z_*|^4)$$ as $z \to z_*$. Since $x_* > \sqrt{2}$ and $z_* < 0$, the coefficient of the $(z-z_*)^3$ term is positive. Now, for $x<x_*$ and near $x_*$, we can find $s, \, t >0$ so that $$\alpha(z_* + t ) = x = \alpha(z_* -s),$$ and it follows from the previous formula for $\alpha(z)$ that $t>s$. 

To prove the lemma, we consider the function $\delta(x) = \gamma(x) + \beta(x)$. For $x \in [2\sqrt{2} , x_*)$, since $\gamma(x) \leq 0$ and $\beta(x) < 0$, we have $$\delta(x)  < 0.$$  Also, by the previous paragraph, $$\delta(x) = (z_* + t) + (z_* -s)  > 2 z_* = \delta(x_*),$$ for $x<x_*$ and near $x_*$. 

Now we are in position to show that $\delta > 2z_*$ on $[2\sqrt{2}, x_*)$. Suppose $\delta(x) = 2z_*$ for some $x \in [2\sqrt{2}, x_*)$. It follows from the previous discussion that $\delta$ achieves a negative maximum at some point $\bar{x} \in (2\sqrt{2}, x_*)$. At this point we have $\delta''(\bar{x}) \leq 0$ and $$\frac{\delta''(\bar{x})}{1 + \gamma'(\bar{x})^2} = -\frac{1}{2} \delta(\bar{x}) > 0,$$ which is a contradiction. Therefore, $\delta > 2z_*$ on $[2\sqrt{2}, x_*)$. Since $\gamma(x) \leq 0$ on $[2\sqrt{2}, x_*)$, this completes the proof of the lemma.
\end{proof}

\begin{claim}
\label{c2:sm:b}
Let $b \in (0, \bar{b}]$. If $\beta<0$ on $(x_{**},x_*)$, then $$x_{**} \leq \frac{8}{\pi - \sqrt{2}}(-z_*).$$
\end{claim}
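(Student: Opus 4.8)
The plan is to combine the divergence form of~(\ref{gamma:ode}) for $\beta$ with the integrating factor $e^{-x^2/4}$; this will give a bound on $x_{**}$ that does not deteriorate when $x_*$ (and hence $x_m$) is large. If $x_{**}=0$ the inequality is trivial, so I would assume $x_{**}>0$, in which case $\lim_{x\to x_{**}}\beta'(x)=-\infty$. Since $b\in(0,\bar b]$ we have $x_*\ge\sqrt{\log(2/\pi b^2)}$, which (for such small $b$) certainly exceeds $4$, so Claim~\ref{c1:sm:b} produces $x_m\in(x_{**},x_*)$ with $\beta'(x_m)=0$, and Claim~\ref{beta:claim1} gives $\beta''>0$ on $(x_{**},x_*)$; hence $\beta'<0$ and $\beta$ is strictly decreasing on $(x_{**},x_m)$, so $\beta$ attains its minimum at $x_m$. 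Using this together with the preceding lemma ($2z_*\le\beta\le 0$ on $[2\sqrt2,x_*]$, applicable at $x_m$ since $x_m>2\sqrt2$) I would record that $0\le -\beta(x)\le -\beta(x_m)\le -2z_*$ for every $x\in(x_{**},x_m)$.

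Next, set $P(x)=x\beta'(x)/\sqrt{1+\beta'(x)^2}$. On $(x_{**},x_m)$ we have $\theta:=\arctan\beta'\in(-\pi/2,0)$, so $P=x\sin\theta$ with $|P|\le x$, and $\beta/\sqrt{1+\beta'^2}=\beta\cos\theta=(\beta/x)\sqrt{x^2-P^2}$. Substituting this into the divergence form~(\ref{gamma:div:form}) (which holds verbatim for $\beta$) gives the first–order identity
\begin{equation*}
P'(x)-\tfrac{x}{2}P(x)\ =\ \tfrac{1}{2}\big(-\beta(x)\big)\sqrt{x^2-P(x)^2}\qquad\text{on }(x_{**},x_m),
\end{equation*}
so, multiplying by $e^{-x^2/4}$ and using $-\beta\le -2z_*$ and $\sqrt{x^2-P^2}\le x$,
\begin{equation*}
\tfrac{d}{dx}\big(e^{-x^2/4}P(x)\big)\ =\ \tfrac12\big(-\beta(x)\big)\,e^{-x^2/4}\sqrt{x^2-P^2}\ \le\ (-z_*)\,x\,e^{-x^2/4}.
\end{equation*}
Integrating from $x_{**}$ to $x_m$, with $P(x_m)=x_m\sin 0=0$, $\lim_{x\to x_{**}}P(x)=x_{**}\sin(-\pi/2)=-x_{**}$, and $\int_{x_{**}}^{x_m}xe^{-x^2/4}\,dx=2\big(e^{-x_{**}^2/4}-e^{-x_m^2/4}\big)\le 2e^{-x_{**}^2/4}$, I obtain $x_{**}e^{-x_{**}^2/4}\le 2(-z_*)e^{-x_{**}^2/4}$, i.e. $x_{**}\le 2(-z_*)$, which in particular implies the asserted bound $x_{**}\le\frac{8}{\pi-\sqrt2}(-z_*)$. (The two crude estimates $-\beta\le -2z_*$ and $\sqrt{x^2-P^2}\le x$ could be sharpened near $x_{**}$ and near $\sqrt2$ if one wants to land exactly on a particular constant, but the bound $2$ already suffices.)

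The one genuinely delicate point is the choice of integrating factor $e^{-x^2/4}$. Dropping the favorable term $\tfrac x2 P\le 0$ and integrating $P'\le(-z_*)x$ directly would only yield $x_{**}\lesssim x_m^2(-z_*)$, which is useless here since $x_m\sim\sqrt{\log(1/b)}\to\infty$ as $b\to 0$; the weight $e^{-x^2/4}$ is exactly what makes $\int xe^{-x^2/4}dx$ controlled by its value at $x_{**}$, absorbing the growth of $x_m$. The remaining checks are routine: $P$ extends continuously to $x=x_{**}$ with value $-x_{**}$, and $\tfrac{d}{dx}(e^{-x^2/4}P)$ is integrable up to $x_{**}$—indeed it tends to $0$ there, since $\sqrt{x^2-P^2}=x\cos\theta\to 0$ as $\theta\to-\pi/2$—so that the fundamental theorem of calculus applies on the half–open interval $(x_{**},x_m]$.
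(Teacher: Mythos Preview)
Your proof is correct and takes a genuinely different route from the paper's. The paper integrates $\frac{d}{dx}\arctan\beta'$ over the fixed interval $(x_{**},2\sqrt{2})$, bounding $(\tfrac12 x-\tfrac1x)\beta'\le -\tfrac{1}{x_{**}}\beta'$ and $-\tfrac12\beta\le -z_*$, and it first needs the auxiliary estimate $\beta'(2\sqrt{2})>-1$ (obtained from the ODE at $x=2\sqrt{2}$) to control the upper endpoint; this yields the constant $\tfrac{8}{\pi-\sqrt{2}}\approx 4.63$. Your argument instead rewrites the divergence form as a first-order linear equation for $P=x\sin\theta$, multiplies by the integrating factor $e^{-x^2/4}$, and integrates over the natural interval $(x_{**},x_m)$. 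The weight is exactly what absorbs the growth of $x_m$ (as you note), so no cutoff at $2\sqrt{2}$ and no preliminary bound on $\beta'(2\sqrt{2})$ are needed, and you get the sharper constant $x_{**}\le 2(-z_*)$. Both proofs rely on the preceding lemma only through the single consequence $-\beta(x_m)\le -2z_*$; your approach packages the remaining estimates more cleanly and would also propagate to the higher-dimensional version (replacing $\tfrac1x$ by $\tfrac{n-1}{x}$) with minimal change.
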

\begin{proof}
By our assumptions on $\bar{b}$, we know that $x_m> 2\sqrt{2}$, $\beta''>0$, and $\beta(2\sqrt{2}) \geq 2z_*$. Using equation~(\ref{gamma:ode}), we get the estimate $\beta'(2\sqrt{2}) > \frac{\sqrt{2}}{3} \beta(2\sqrt{2}) \geq \frac{2\sqrt{2}}{3}z_*$ so that $\beta'(2\sqrt{2}) > -1$. For $x \in (x_{**}, \sqrt{2})$, we have $$\frac{d}{dx} \left( \arctan \beta'(x) \right) = \left( \frac{1}{2}x - \frac{1}{x} \right) \beta'(x) - \frac{1}{2} \beta(x) \leq -\frac{1}{x_{**}} \beta'(x) - \frac{1}{2} \beta(2\sqrt{2}).$$ Integrate from $x_{**}$ to $2\sqrt{2}$, $$\arctan \beta'(2\sqrt{2}) + \frac{\pi}{2} \leq \left( \frac{1}{x_{**}} + \sqrt{2} \right) (-\beta(2\sqrt{2})).$$ Since $\beta'(2\sqrt{2}) > -1$,  and $\beta(2\sqrt{2}) \geq 2z_*$, this becomes $$\frac{\pi}{4} \leq \left( \frac{1}{x_{**}} + \sqrt{2} \right) 2 (-z_*).$$ Rearranging this inequality to estimate $x_{**}$ and using $z_* \geq -1/8$, we have $$x_{**} \leq \frac{8}{\pi - \sqrt{2}}(-z_*).$$
\end{proof}

\begin{proposition}
\label{beta:crossing}
There exists $\tilde{b}>0$ so that for $b \in (0, \tilde{b}]$ there is a point $x_m^b \in (x_{**}^b,x_*^b)$ so that $\beta_b'(x_m^b) =0$ and $0 < \beta_b(x_{**}^b) < \infty$.
\end{proposition}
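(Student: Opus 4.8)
The plan is to harvest everything in the statement except the strict positivity $\beta_b(x_{**}^b)>0$ from the lemmas already in hand, and then to carry out the ``direct crossing'' step advertised in the introduction.

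First, since Proposition~\ref{gamma:small:height} gives $x_*^b\ge\sqrt{\log(2/\pi b^2)}$, which tends to $\infty$ as $b\to 0^+$, I would fix $b_1\in(0,\bar b]$ so that $x_*^b\ge 4$ for every $b\in(0,b_1]$. For such $b$, Claim~\ref{c1:sm:b} gives a point $x_m^b\in[x_*^b-2,x_*^b)$ with $\beta_b'(x_m^b)=0$; Claim~\ref{beta:claim1} then gives $\beta_b''>0$ on $(x_{**}^b,x_*^b)$, Claim~\ref{c3:sm:b} gives $x_{**}^b>0$, and Lemma~\ref{beta:bd} gives $\beta_b(x_{**}^b)<\infty$. (Note also that $x_m^b\to\infty$ as $b\to0^+$, since $x_m^b\ge x_*^b-2$.) So everything in the statement except $\beta_b(x_{**}^b)>0$ holds as soon as $b\le b_1$, and it remains to choose $\tilde b\le b_1$ so that $\beta_b(x_{**}^b)>0$ on $(0,\tilde b]$.

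Next I would argue by contradiction: suppose $b\in(0,b_1]$ with $\beta_b(x_{**}^b)\le 0$. Since $\beta_b'$ increases to $0$ at $x_m^b$, so that $\beta_b$ is strictly decreasing on $(x_{**}^b,x_m^b)$ and strictly increasing on $(x_m^b,x_*^b)$, and since $\beta_b(x_*^b)=z_*^b<0$, the hypothesis $\beta_b(x_{**}^b)\le 0$ forces $\beta_b<0$ on all of $(x_{**}^b,x_*^b)$. Then Claim~\ref{c2:sm:b} applies and yields $x_{**}^b\le\frac{8}{\pi-\sqrt2}(-z_*^b)$, and since Proposition~\ref{gamma:small:height} bounds $-z_*^b\le 12/\sqrt{\log(2/\pi b^2)}$, shrinking $b$ makes $x_{**}^b$ as small as we wish. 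Moreover the lemma preceding Claim~\ref{c2:sm:b}, which gives $2z_*^b\le\beta_b<0$ on $[2\sqrt2,x_*^b]$, together with the monotonicity just noted, yields the uniform bound $|\beta_b|\le 2|z_*^b|$ on $(x_{**}^b,x_*^b]$. So the contradiction I must reach is: \emph{a solution $\beta_b$ of~(\ref{gamma:ode}) cannot be negative on all of $(x_{**}^b,x_*^b)$, blow up with $\beta_b'\to-\infty$ at a point $x_{**}^b$ that is very small, and satisfy $|\beta_b|\le 2|z_*^b|$ there with $|z_*^b|$ also small.}

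This last step is the direct crossing argument, and it is where I expect the real work to lie. The point is that near $x_{**}^b$ the coefficient $\tfrac12 x-\tfrac1x$ in~(\ref{gamma:ode}) is negative of magnitude $\approx 1/x_{**}^b$, which is enormous, while $\beta_b$ stays tiny, so the $-\tfrac1x$ term overwhelms the $-\tfrac12\beta_b$ term and sets the geometry near the blow-up. Concretely I would work with $\tfrac{|\beta_b'|}{\sqrt{1+(\beta_b')^2}}$, which tends to $1$ as $x\to x_{**}^b$ and satisfies
\[
\frac{d}{dx}\log\frac{|\beta_b'|}{\sqrt{1+(\beta_b')^2}}=\Bigl(\tfrac12 x-\tfrac1x\Bigr)-\frac{\beta_b}{2\beta_b'}
\]
on $(x_{**}^b,x_m^b)$, integrate outward from $x_{**}^b$ over a window $(x_{**}^b,c)$ whose right endpoint $c=c(b)$ grows slowly (but stays well below $x_m^b$), and control the error term $\beta_b/2\beta_b'$ by $|z_*^b|/|\beta_b'|$ using the uniform bound on $|\beta_b|$; because the explicit ``singular part'' $\int_{x_{**}^b}^{c}(\tfrac12 s-\tfrac1s)\,ds=\tfrac14(c^2-(x_{**}^b)^2)-\log\frac{c}{x_{**}^b}$ becomes large once $x_{**}^b$ is small and $c$ is taken large enough, this should pin down $\beta_b$ and $\beta_b'$ at $c$ tightly enough that, fed back into~(\ref{gamma:ode}), $\beta_b$ is forced to turn upward and acquire a zero in $(x_{**}^b,x_*^b)$ — contradicting $\beta_b<0$ there. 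The delicate point will be calibrating $c(b)$ and the constants so that the singular part beats the error exactly in the regime $x_{**}^b\le\frac{8}{\pi-\sqrt2}(-z_*^b)$ coming from Claim~\ref{c2:sm:b}; this is the crossing argument the introduction contrasts with Angenent's limiting argument, and getting it to close is the main obstacle. Once it does, the proposition holds with any $\tilde b\le b_1$ below the resulting threshold.
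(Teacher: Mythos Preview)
Your setup is correct and matches the paper: the existence of $x_m^b$, concavity, $x_{**}^b>0$, and $\beta_b(x_{**}^b)<\infty$ all follow from the earlier claims, and the reduction ``suppose $\beta_b(x_{**}^b)\le 0$, hence $\beta_b<0$ everywhere, hence $x_{**}^b$ is tiny by Claim~\ref{c2:sm:b}'' is exactly what the paper does.

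The gap is in your crossing step. Integrating $\frac{d}{dx}\log\frac{|\beta'|}{\sqrt{1+(\beta')^2}}$ \emph{outward} from $x_{**}^b$ yields an \emph{upper} bound on $|\beta'(c)|$ (once you drop the error $-\beta/2\beta'<0$), not a lower bound; and an upper bound on $|\beta'|$ cannot force $\beta$ to climb above zero. Your remark that the singular integral ``becomes large'' does not produce a contradiction either: $\log\frac{|\beta'(c)|}{\sqrt{1+\beta'(c)^2}}$ is always $\le 0$, and the dropped error term has the sign that absorbs any positivity coming from $c^2/4$. So the mechanism you describe does not close, and the difficulty you flag (``calibrating $c(b)$'') is not a matter of constants but of direction.

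The paper's argument runs the integration the other way and adds one preliminary ingredient. First it shows $\beta'''<0$ on $(x_{**},\sqrt{2}]$ via~(\ref{gamma:eq:3}), so $\beta''\ge\beta''(\sqrt{2})\ge -\tfrac12\beta(\sqrt{2})$ there, and integrating over $[1,\sqrt{2}]$ gives the quantitative link $\beta'(1)\le\tfrac{\sqrt{2}-1}{2}\beta(1)$. Then on $(x_{**},1)$ it drops both the factor $1+(\beta')^2$ and the term $-\tfrac12\beta$ (each in the favorable direction) to get $\frac{\beta''}{\beta'}\le\tfrac12 x-\tfrac1x$, and integrates \emph{inward} from $1$ to $x$ to obtain the \emph{lower} bound $|\beta'(x)|\ge |\beta'(1)|\,x^{-1}e^{-1/4}$, i.e.\ $\beta'(x)\le \beta'(1)\,x^{-1}e^{-1/4}$. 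One more integration gives $\beta(x)\ge\beta(1)+\beta'(1)e^{-1/4}\log x\ge \beta'(1)\bigl(\tfrac{2}{\sqrt{2}-1}+e^{-1/4}\log x\bigr)$, and since $\beta'(1)<0$ this is strictly positive once $x<e^{-2e^{1/4}/(\sqrt{2}-1)}$, contradicting $\beta<0$ as soon as $x_{**}$ is below that fixed threshold. The missing ideas in your sketch are thus (i) the $\beta'''<0$ step that ties $\beta'(1)$ to $\beta(1)$, and (ii) integrating inward from a fixed anchor point to get a lower bound on $|\beta'|$ rather than outward from the blow-up.
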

\begin{proof}
Fix $b \in (0, \bar{b}]$, and let $\beta = \beta_b$. By Claim~\ref{c1:sm:b} we know there exists $x_m > 2\sqrt{2}$ so that $\beta'(x_m) = 0$. It follows that $\beta''>0$ on $(x_{**},x_*)$ and $\beta' < 0$ on $(x_{**}, x_m)$. From Lemma~\ref{beta:bd}, we know that $\beta(x_{**}) < \infty$.

Suppose $\beta(x_{**}) \leq 0$ so that $\beta<0$ in $(x_{**},x_m)$. By Claim~\ref{c2:sm:b} we know that $x_{**}<1$ for $b$ sufficiently small. Then, using the equation~(\ref{gamma:eq:3}) for $\beta'''$, we see that $\beta'''<0$ in $(x_{**}, \sqrt{2}]$. We know that $\beta''(\sqrt{2}) \geq -\frac{1}{2} \beta(\sqrt{2})$, and thus $\beta''(x) \geq -\frac{1}{2} \beta(\sqrt{2})$, for $x \in (x_{**}, \sqrt{2}]$. Integrating from $1$ to $\sqrt{2}$ and using that $\beta$ is decreasing on $(1,\sqrt{2})$, we have  $$\beta'(1) \leq \frac{\sqrt{2} - 1}{2} \beta(1).$$

Let $x \in (x_{**}, 1)$. Under the above assumptions, we may write~(\ref{gamma:ode}) as $\frac{\beta''(x)}{\beta'(x)} \leq \left(\frac{1}{2}x - \frac{1}{x} \right)$, and integrating this inequality from $x$ to $1$, we get $\beta'(x) \leq \frac{\beta'(1)}{x} e^{-\frac{1}{4}}$. Integrating again, $$\beta(x) \geq \beta(1) + \beta'(1) e^{-\frac{1}{4}} \log{x}.$$ Combining this with $\beta'(1) \leq \frac{\sqrt{2} - 1}{2} \beta(1)$, we see that $$0>\beta(x) \geq \beta'(1) \left( \frac{2}{\sqrt{2}-1} + e^{-\frac{1}{4}} \log{x} \right).$$ If $b$ is chosen sufficiently small (so that $x_{**} < e^{-\frac{2e^{1/4}}{\sqrt{2}-1}}$), then we have a contradiction. Therefore, $0 < \beta_b(x_{**}^b) < \infty$ when $b>0$ is sufficiently small.
\end{proof}


\section{An Immersed $S^2$ Self-Shrinker}
\label{proof}

In this section, we complete the proof of Theorem~\ref{thm:1}. We consider the set $$\{ \tilde{b} \, : \, \forall b \in (0, \tilde{b}], \exists x_m^b \in (x_{**}^b, x_*^b) \textrm{ so that } \beta_b'(x_m^b) = 0 \textrm{ and } \beta_b(x_{**}^b) > 0\}.$$ By Proposition~\ref{beta:crossing}, we know that this set is non-empty. Following Angenent's argument in~\cite{A}, we let $b_0$ be the supremum of this set: $$b_0 = \sup \{ \tilde{b} \, : \, \forall b \in (0, \tilde{b}], \exists x_m^b \in (x_{**}^b, x_*^b) \textrm{ so that } \beta_b'(x_m^b) = 0 \textrm{ and } \beta_b(x_{**}^b) > 0\}.$$ Since $\beta_b(x) = - \sqrt{4 - x^2}$ when $b=2$, we know that $b_0 \leq 2$. We want to show $\beta_{b_0}$ intersects the $x$-axis perpendicularly at $x_{**}^{b_0}$.

\begin{claim}
\label{c1:im}
$b_0 < 2$.
\end{claim}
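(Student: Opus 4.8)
The plan is to rule out $b_0 = 2$ by understanding what happens to the curves $\beta_b$ as $b \to 2^-$ and comparing with the exact round sphere solution at $b = 2$. When $b = 2$, the solution $\gamma_2$ of~(\ref{gamma:ode}) with $\gamma_2(0) = 2$, $\gamma_2'(0) = 0$ is the upper semicircle $\gamma_2(x) = \sqrt{4 - x^2}$, its blow-up point is $(x_*^2, z_*^2) = (2, 0)$, and the connecting curve $\alpha_2$ and second branch $\beta_2(x) = -\sqrt{4-x^2}$ together trace out the lower semicircle, so $x_{**}^2 = 0$: the curve $\beta_2$ does \emph{not} blow up at a positive $x$-value, and in particular $\beta_2'$ never vanishes on $(0, 2)$ (it is negative throughout, since $\beta_2$ is the left half of the lower semicircle? — no: $\beta_2'(x) = x/\sqrt{4-x^2} > 0$ on $(0,2)$, so $\beta_2' > 0$ on all of $(0, x_*^2)$). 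Either way, $\beta_2$ does not satisfy the defining property of the set (there is no $x_m^2 \in (x_{**}^2, x_*^2)$ with $\beta_2'(x_m^2) = 0$, since $x_{**}^2 = 0$ and $\beta_2' > 0$ on $(0,2)$). So $b = 2$ is not in the set.

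The delicate point is that $b_0$ is a \emph{supremum}, so I must show the defining property fails for all $b$ in a left-neighborhood of $2$, not merely at $b = 2$ itself. The strategy is a continuity/openness argument running in the reverse direction from Proposition~\ref{beta:crossing}. First I would use Corollary~\ref{gamma:blowup:pt} (continuity of $(x_*^b, z_*^b)$ in $b$) together with the fact that $(x_*^2, z_*^2) = (2, 0)$ to conclude that for $b$ close to $2$, the blow-up point $(x_*^b, z_*^b)$ is close to $(2, 0)$; in particular $z_*^b$ is close to $0$ (and negative, by Corollary~\ref{gamma:blowup:pt}, when $b < 2$). Then I would use Proposition~\ref{prop:alpha:cont}, the continuity of the $\alpha$ curves, to see that the connecting curve $\alpha_b$ stays uniformly close to $\alpha_2$ on a fixed interval of $z$-values straddling $z_*^2 = 0$; since $\alpha_2$ reaches down to the $x$-axis with $\alpha_2 > 0$ there (value $2$) and $\alpha_2' < 0$ strictly below $z = 0$, the curve $\beta_b$ emerging from $\alpha_b$ is, for $b$ near $2$, a graph $(x, \beta_b(x))$ with $\beta_b' < 0$ on an initial stretch and $\beta_b$ crossing into... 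Actually the cleaner route: use Proposition~\ref{prop:beta:cont} to show $\beta_b \to \beta_2$ uniformly (with derivatives) on compact subsets of $(0, 2)$; since $\beta_2' = x/\sqrt{4-x^2} > 0$ is bounded away from $0$ on any compact subinterval of $(0,2)$, for $b$ near $2$ we get $\beta_b' > 0$ on that subinterval too. Combining with Claim~\ref{beta:claim1} (there is at most one zero of $\beta_b'$, and if it exists then $\beta_b'' > 0$ everywhere and $x_{**}^b > 0$ by Claim~\ref{c3:sm:b}), I can push the "no interior zero of $\beta_b'$" conclusion across the whole interval $(x_{**}^b, x_*^b)$ for $b$ sufficiently near $2$.

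The main obstacle I anticipate is controlling $\beta_b$ near the two endpoints $x_{**}^b$ and $x_*^b$, which drift as $b$ varies and where the continuity propositions only give information on interiors $[x_{**}^{b_0} + \varepsilon, x_*^{b_0} - \varepsilon]$. Near $x_*^b$ the slope $\beta_b'$ blows up to $+\infty$, so $\beta_b' > 0$ automatically there — that end is fine. Near $x_{**}^b$, I need to rule out $\beta_b'$ vanishing; here I would argue by contradiction: if $\beta_b'(x_m^b) = 0$ for some $x_m^b \in (x_{**}^b, x_*^b)$, then by Claim~\ref{beta:claim1} $\beta_b'' > 0$ throughout, so $\beta_b' < 0$ on $(x_{**}^b, x_m^b)$, and since $\beta_b$ is uniformly close to $\beta_2 > -2$ (bounded) on a large compact interval while $\beta_2'$ is uniformly positive there, $x_m^b$ would have to lie very close to $x_{**}^2 = 0$; but then $-z_*^b$ small combined with Claim~\ref{c2:sm:b}-type reasoning (or directly, the formula $\beta_b \approx \beta_2$) forces $x_{**}^b$ itself to be small and $\beta_b(x_{**}^b)$ to be near $\beta_2(0) = -2 < 0$, violating $\beta_b(x_{**}^b) > 0$. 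Thus the set defining $b_0$ cannot contain any $b$ too close to $2$, so $b_0 < 2$. The technical care is entirely in making the "uniformly close on a large compact set, plus controlled at the moving endpoints" estimates precise, but no genuinely new idea beyond the continuity machinery already assembled is required.
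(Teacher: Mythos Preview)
Your overall strategy—use continuity of the $\beta_b$ curves in $b$ to show the defining property fails for $b$ near $2$—is the contrapositive of the paper's approach, so structurally they match. You correctly identify that the only real difficulty is at the left endpoint near $x_{**}^b \approx 0$, and you correctly deduce (via Proposition~\ref{prop:beta:cont} and $\beta_2' > 0$ on compacta) that if $x_m^b$ exists it must lie in $(0,\varepsilon)$.

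The gap is in how you close the argument on the strip $(0,\varepsilon)$. Neither of your two proposed fixes works. Claim~\ref{c2:sm:b} is proved only for $b \in (0,\bar{b}]$ with $\bar{b}$ extremely small; its hypotheses (for instance $x_m > 2\sqrt{2}$ and $\beta(2\sqrt{2}) \geq 2z_*$) come from the small-$b$ analysis of Section~\ref{gamma:branch} and are simply unavailable for $b$ near $2$. And ``$\beta_b \approx \beta_2$'' is only valid on compact subsets of $(0,2)$: it gives you $\beta_b(\varepsilon) \approx -2$, but says nothing about $\beta_b(x_{**}^b)$. On $(x_{**}^b, x_m^b)$ the derivative $\beta_b'$ runs to $-\infty$, so $\beta_b$ could a priori climb from $\beta_b(x_m^b) \lesssim -2$ all the way above $0$ within that tiny interval. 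This is exactly the scenario you must exclude, and the continuity machinery alone cannot do it.

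What the paper supplies at this point is a quantitative ODE estimate exploiting the $1/\alpha$ singularity in~(\ref{alpha:eq}). Writing the portion of $(x,\beta_n(x))$ between heights $z=0$ and $z=-m/2$ as $(\bar{\alpha}_n(z),z)$ with $0 < \bar{\alpha}_n < \varepsilon$, one gets $\bar{\alpha}_n'' \geq \tfrac{1}{2\varepsilon}$, hence $\bar{\alpha}_n(-m/2) \geq m^2/(16\varepsilon)$; but also $\bar{\alpha}_n(-m/2) < \varepsilon$, a contradiction for small $\varepsilon$. In words: a solution trapped in a thin vertical strip near the axis cannot traverse a $z$-interval of fixed length. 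This is the missing ingredient in your sketch; once you add it, your argument becomes essentially the paper's.

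One further remark: the paper actually proves the stronger statement that $\beta_{b_0}' > 0$ and $\beta_{b_0}'' \geq 0$ cannot simultaneously hold on $(x_{**}^{b_0}, x_*^{b_0})$, and this stronger form is reused verbatim in the proof of Claim~\ref{c2:im}. Your argument, being tailored to $b_0 = 2$, would not cover that.
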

\begin{proof}
We will prove it is impossible to have $\beta_{b_0}'>0$ and $\beta''_{b_0} \geq 0$ in $(x_{**}^{b_0},x_*^{b_0})$. In particular, this will show $b_0 \neq 2$.

Suppose $\beta_{b_0}'>0$ and $\beta''_{b_0} \geq 0$ in $(x_{**}^{b_0},x_*^{b_0})$. Then $x_{**}^{b_0} = 0$, and there exists $m>0$ so that $\beta_{b_0}(x) \leq -m$ for $x \in (0,1]$. Let $b_n$ be an increasing sequence that converges to $b_0$, and let $\beta_n$ be the solution of~(\ref{gamma:ode}) corresponding to the initial height $b_n$. Fix $\varepsilon > 0$. By Proposition~\ref{prop:beta:cont}, there exists $N=N(\varepsilon)>0$ so that for $n>N$, we have $x_{**}^{n} < \varepsilon$ and $\beta_n(\varepsilon) < -m$. We know that $\beta_n$ intersects the $x$-axis at some point $x_{\ell}^n < \varepsilon$, and we see that the curve $(x,\beta_n(x))$ may be written as the curve $(\bar{\alpha}_n(z),z)$ for $z \in [-m/2,0]$, where $\bar{\alpha}_n$ is a solution of~(\ref{alpha:eq}). In fact, we have the estimates $$0 < \bar{\alpha}_n(z) < \varepsilon$$ and $$-\frac{2 \varepsilon}{m} \leq \bar{\alpha}_n'(z) \leq 0.$$ Using~(\ref{alpha:eq}), we get an estimate for $\bar{\alpha}_n''(z)$ when $z \in [-m/2,0]$: $$\bar{\alpha}_n''(z) \geq \frac{1}{\varepsilon} - \frac{1}{2} \varepsilon \geq \frac{1}{2\varepsilon},$$ for small $\varepsilon>0$. Integrating repeatedly from $z$ to $0$: $$\bar{\alpha}_n(z) \geq \bar{\alpha}_n(0) + \bar{\alpha}_n'(0)z + \frac{1}{4\varepsilon} z^2 \geq \frac{1}{4\varepsilon} z^2.$$ Taking $z=-m/2$, we have $\bar{\alpha}_n(-m/2) \geq \frac{m^2}{16 \varepsilon}$. This implies that the point $x' \in (x_{**}^n, \varepsilon)$ for which $\beta_n(x') = -m/2$ satisfies $x' \geq \frac{m^2}{16 \varepsilon}$. Therefore, $\frac{m^2}{16 \varepsilon} \leq x' < \varepsilon$, which is a contradiction when $\varepsilon > 0$ is sufficiently small.

\end{proof}

\begin{claim}
\label{c2:im}
There exists $x^{b_0}_m \in (x^{b_0}_{**}, x^{b_0}_*)$ so that $\beta_{b_0}'(x^{b_0}_m) = 0$.
\end{claim}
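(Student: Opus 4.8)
The plan is to argue by contradiction, using the continuity theory from Proposition~\ref{prop:beta:cont} together with the structural dichotomy of Claim~\ref{beta:claim1}. Suppose $\beta_{b_0}'$ never vanishes on $(x_{**}^{b_0},x_*^{b_0})$. Since $\lim_{x\to x_*^{b_0}}\beta_{b_0}'(x)=\infty$, the derivative $\beta_{b_0}'$ is positive near $x_*^{b_0}$, and being continuous and nowhere zero it is positive on all of $(x_{**}^{b_0},x_*^{b_0})$. The goal is then to show that in addition $\beta_{b_0}''\geq 0$ on this whole interval: this is exactly the configuration shown to be impossible inside the proof of Claim~\ref{c1:im} (where it is established that one cannot have $\beta_{b_0}'>0$ and $\beta_{b_0}''\geq 0$ simultaneously on $(x_{**}^{b_0},x_*^{b_0})$), so this yields the contradiction.

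To obtain $\beta_{b_0}''\geq 0$ I would pass to the limit from below in $b$. For every $b\in(0,b_0)$, the definition of $b_0$ as a supremum forces $b$ to satisfy the defining property, so there is a point $x_m^b\in(x_{**}^b,x_*^b)$ with $\beta_b'(x_m^b)=0$; by Claim~\ref{beta:claim1} this gives $\beta_b''>0$ on all of $(x_{**}^b,x_*^b)$. Now fix $\varepsilon>0$. By Proposition~\ref{prop:beta:cont}, for $b$ sufficiently close to $b_0$ the function $\beta_b$ is defined on $[x_{**}^{b_0}+\varepsilon,\,x_*^{b_0}-\varepsilon]$ — in particular this closed interval lies inside $(x_{**}^b,x_*^b)$, so $\beta_b''>0$ there — and $\beta_b\to\beta_{b_0}$, $\beta_b'\to\beta_{b_0}'$ uniformly on it as $b\to b_0$. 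Since (\ref{gamma:ode}) expresses $\beta_b''$ as a continuous function of $x$, $\beta_b$, and $\beta_b'$, we get $\beta_b''\to\beta_{b_0}''$ uniformly on $[x_{**}^{b_0}+\varepsilon,\,x_*^{b_0}-\varepsilon]$, hence $\beta_{b_0}''\geq 0$ there. Letting $\varepsilon\to 0$ exhausts $(x_{**}^{b_0},x_*^{b_0})$ and gives $\beta_{b_0}''\geq 0$ on the full interval, which together with $\beta_{b_0}'>0$ contradicts the impossibility recalled above.

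The main point to be careful about is the bookkeeping at the two endpoints: one must know that $[x_{**}^{b_0}+\varepsilon,\,x_*^{b_0}-\varepsilon]\subset(x_{**}^b,x_*^b)$ for $b$ near $b_0$, but this is precisely the ``$\beta_b$ is defined on $[x_{**}^{b_0}+\varepsilon,\,x_*^{b_0}-\varepsilon]$'' clause of Proposition~\ref{prop:beta:cont}, so it is already available; and one must ensure that the degenerate behavior of $\beta_{b_0}$ at $x_{**}^{b_0}$ (blow-up) does not interfere, which is taken care of by the $\varepsilon\to 0$ exhaustion, since the conclusion $\beta_{b_0}''\geq 0$ is only claimed on the open interval. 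Everything else is a routine application of continuous dependence on the initial height and of Claim~\ref{beta:claim1}.
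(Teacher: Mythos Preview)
Your proposal is correct and follows essentially the same approach as the paper. The paper's proof reaches the contradiction in the contrapositive order---it first invokes Claim~\ref{c1:im} to deduce $\beta_{b_0}''(x')<0$ at some point $x'$, and then uses Proposition~\ref{prop:beta:cont} to show $\beta_{b_n}''(x')<0$ for large $n$, contradicting $\beta_{b_n}''>0$---whereas you pass to the limit first to get $\beta_{b_0}''\geq 0$ globally and then invoke Claim~\ref{c1:im}; but the ingredients and logic are identical.
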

\begin{proof}
Suppose not. Then $\beta_{b_0}' > 0$ in $(x^{b_0}_{**}, x^{b_0}_*)$. By the proof of Claim~\ref{c1:im} we know that it is impossible to have $\beta_{b_0}'>0$ and $\beta''_{b_0} \geq 0$ in $(x_{**}^{b_0},x_*^{b_0})$. Therefore, we must have $\beta_{b_0}'' < 0$ at some point. Let $b_n$ be an increasing sequence that converges to $b_0$, and let $\beta_n$ be the solution of~(\ref{gamma:ode}) corresponding to the initial height $b_n$. Since each curve $\beta_n$ crosses the $x$-axis we know from Claim~\ref{beta:claim1} that $\beta_n'' >0$. Since $\beta_{b_0}'' < 0$ at some point $x'$, we can use Proposition~\ref{prop:beta:cont} to find an $N>0$ so that $\beta_n''(x') < 0$ for $n>N$, which contradicts the fact that $\beta_n'' >0$.
\end{proof}

Since there exists $x^{b_0}_m \in (x^{b_0}_{**}, x^{b_0}_*)$ with $\beta_{b_0}'(x^{b_0}_m) = 0$, we know that $\beta_{b_0}''>0$ on $(x_{**}^{b_0}, x_*^{b_0})$ and $x_{**}^{b_0} > 0$. We also know that there is a point $z_{**}^{b_0}$ with $|z_{**}^{b_0}| < \infty$ so that $\beta_{b_0}(x_{**}^{b_0}) = z_{**}^{b_0}$.

\begin{claim}
\label{beta:intersect}
$z_{**}^{b_0} = 0$.
\end{claim}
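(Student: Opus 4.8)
The plan is to rule out both $z_{**}^{b_0} > 0$ and $z_{**}^{b_0} < 0$ by a continuity-and-contradiction argument against the definition of $b_0$ as a supremum. The setup gives us, for $b = b_0$, a genuine interior minimum point $x_m^{b_0}$ with $\beta_{b_0}'(x_m^{b_0}) = 0$, hence $\beta_{b_0}'' > 0$ on $(x_{**}^{b_0}, x_*^{b_0})$, $x_{**}^{b_0} > 0$, and $\beta_{b_0}(x_{**}^{b_0}) = z_{**}^{b_0}$ finite. Proposition~\ref{beta:blowup:pt} then says the blow-up point $(x_{**}^b, z_{**}^b)$ exists and depends continuously on $b$ for $b$ near $b_0$, and Proposition~\ref{prop:beta:cont} gives continuity of $\beta_b$ and $\beta_b'$ on compact subintervals of $(x_{**}^{b_0}, x_*^{b_0})$.

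First I would handle the case $z_{**}^{b_0} > 0$. If $\beta_{b_0}(x_{**}^{b_0}) > 0$, then by the continuity in Proposition~\ref{beta:blowup:pt} there is a neighborhood of $b_0$ on which $\beta_b(x_{**}^b) > 0$ still holds, and by Proposition~\ref{beta:blowup:pt} (together with Claim~\ref{beta:claim1} applied to $\beta_b$, whose hypothesis persists: since $\beta_{b_0}$ has an interior critical point $x_m^{b_0}$ with $\beta_{b_0}''(x_m^{b_0}) > 0$, by Proposition~\ref{prop:beta:cont} so does $\beta_b$ for $b$ near $b_0$, forcing $\beta_b'(\cdot) = 0$ somewhere in the interior) one gets that for all $b$ slightly \emph{larger} than $b_0$ the defining conditions of the supremum set still hold. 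Combined with the fact (Proposition~\ref{beta:crossing}, plus the previous claims) that these conditions hold for all $b \in (0, b_0]$, this would show a number strictly bigger than $b_0$ lies in the set, contradicting the supremum. So $z_{**}^{b_0} \le 0$.

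Next the case $z_{**}^{b_0} < 0$. Here I would take an increasing sequence $b_n \uparrow b_0$ with each $b_n$ in the supremum set, so that $\beta_{b_n}(x_{**}^{b_n}) > 0$; since $x_{**}^{b_n} < \sqrt 2$ always, and $\beta_{b_n}$ has an interior critical point, each $\beta_{b_n}$ must cross the $x$-axis on $(x_{**}^{b_n}, x_m^{b_n})$. Using Proposition~\ref{beta:blowup:pt} to pass to the limit, $z_{**}^{b_n} \to z_{**}^{b_0}$; but each $z_{**}^{b_n} > 0$ gives $z_{**}^{b_0} \ge 0$, contradicting $z_{**}^{b_0} < 0$. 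Hence $z_{**}^{b_0} = 0$.

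I expect the main obstacle to be the upper case $z_{**}^{b_0} > 0$: one must verify carefully that \emph{both} defining conditions — existence of an interior critical point $x_m^b$ of $\beta_b$ \emph{and} $\beta_b(x_{**}^b) > 0$ — are stable under increasing $b$ past $b_0$, not merely for $b \le b_0$. The $\beta_b(x_{**}^b) > 0$ part is immediate from Proposition~\ref{beta:blowup:pt}'s continuity, but for the critical point one needs Proposition~\ref{prop:beta:cont} to transport the strict inequality $\beta_{b_0}''(x_m^{b_0}) > 0$ and the sign change of $\beta_b'$ across $x_m^{b_0}$ to nearby $b$; the hypotheses of Proposition~\ref{beta:blowup:pt} ($x_{**}^{b_0} > 0$ and $|z_{**}^{b_0}| < \infty$) are exactly what we have recorded before this claim, so the machinery applies. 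Once both persist, the contradiction with $b_0 = \sup$ is immediate, and the three cases together pin down $z_{**}^{b_0} = 0$.
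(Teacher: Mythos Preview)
Your proposal is correct and follows essentially the same approach as the paper: both cases are ruled out by continuity of $(x_{**}^b, z_{**}^b)$ from Proposition~\ref{beta:blowup:pt}, combined with persistence of the interior critical point $x_m^b$ via Proposition~\ref{prop:beta:cont} and Claim~\ref{beta:claim1}. The only cosmetic difference is that in the case $z_{**}^{b_0} < 0$ you phrase the contradiction via a sequence $b_n \uparrow b_0$ with $z_{**}^{b_n} > 0$, whereas the paper argues directly that continuity forces $z_{**}^b < 0$ for $b$ slightly below $b_0$, contradicting the defining property of the supremum set; these are the same argument.
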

\begin{proof}
We know that $x_{**}^{b_0} > 0$ and $|z_{**}^{b_0}| < \infty$. We also know that there exists $x^{b_0}_m \in (x^{b_0}_{**}, x^{b_0}_*)$ so that $\beta_{b_0}'(x^{b_0}_m) = 0$. It follows from Proposition~\ref{prop:beta:cont} (and Claim~\ref{beta:claim1}) that there is a $\delta'>0$ so that when $|b-b_0|<\delta'$ there exists $x_m^b \in (x_{**}^b,x_*^b)$ with $\beta_b'(x_m^b) = 0$. 

If $z_{**}^{b_0} > 0$, then applying Proposition~\ref{beta:blowup:pt}, we can find $\delta \in (0, \delta')$ so that the curve $\beta_b$ has a finite blow-up point $(x_{**}^b,z_{**}^b)$ with $z_{**}^b>0$ when $|b-b_0|< \delta$ . Then $b_0 + \delta/2 \in \{ \tilde{b} \, : \, \forall b \in (0, \tilde{b}], \exists x_m^b \in (x_{**}^b, x_*^b) \textrm{ so that } \beta_b'(x_m^b) = 0 \textrm{ and } \beta_b(x_{**}^b) > 0 \}$, which contradicts the definition of $b_0$. On the otherhand, if $z_{**}^{b_0} < 0$, then Proposition~\ref{beta:blowup:pt} tells us there exists $\delta \in (0, \delta')$ so that the curve $\beta_b$ has a finite blow-up point $(x_{**}^b,z_{**}^b)$ with $z_{**}^b<0$ when $|b-b_0|< \delta$, but this also contradicts the definition of $b_0$. Therefore, $z_{**}^{b_0} = 0$.
\end{proof}

It follows that the curve $(x,\beta_{b_0}(x))$ intersects the $x$-axis perpendicularly at the point $(x_{**}^{b_0}, 0)$, where $x_{**}^{b_0} \in (0,\sqrt{2})$. Now we can describe the curve $\mathcal{C}$ in the $(x,z)$-plane whose rotation about the $z$-axis is an immersed $S^2$ self-shrinker.

\begin{proof}[Proof of Theorem~\ref{thm:1}]
Let $\mathcal{C}$ be the curve in the $(x,z)$-plane obtained by following along $(x,\gamma_{b_0}(x))$ as $x$ goes from $0$ to $x_*^{b_0}$, following along $(x,\beta_{b_0}(x))$ as $x$ goes from $x_*^{b_0}$ to $x_{**}^{b_0}$, and then following the reflections $(x,-\beta_{b_0}(x))$ and $(x,-\gamma_{b_0}(x))$. That is, $\mathcal{C} = \gamma_{b_0} \cup \beta_{b_0} \cup -\beta_{b_0} \cup -\gamma_{b_0}$ (see Figure~\ref{fig_immersed}). The curve $\mathcal{C}$ intersects the $z$-axis perpendicularly at the two points $(0, b_0)$ and $(0, -b_0)$, and the rotation of $\mathcal{C}$ about the $z$-axis is smooth in a neighborhood of these points. In addition, at the vertical tangent points where the $\gamma$ and $\beta$ curves meet, $\mathcal{C}$ can be represented as an $\alpha$ curve, and we see that $\mathcal{C}$ is a smooth curve, whose rotation $M$ about the $z$-axis is a smooth manifold. In fact, $M$ is the image of a smooth immersion from $S^2$ into $\RR^3$. By construction, the $\gamma$, $\alpha$, and $\beta$ curves are solutions of the differential equation that corresponds to self-shrinkers with rotational symmetry. Also, the $\gamma_{b_0}$ and $-\beta_{b_0}$ curves intersect transversally. Therefore, the surface $M$ is an immersed and non-embedded $S^2$ self-shrinker in $\RR^3$.
\end{proof}


\section{Appendix: Existence of Solutions Near $x=0$}

In this section, we use power series to construct solutions to the differential equation
\begin{equation}
\label{gamma:ode:appendix}
\frac{\gamma''}{1+(\gamma')^2} = \left(\frac{1}{2}x - \frac{1}{x} \right) \gamma' -\frac{1}{2} \gamma,
\end{equation}
when $\gamma(0) \in \RR$ and $\gamma'(0) = 0$.

We look for solutions of the form: $$\gamma(x) = \sum_{i=0}^{\infty} a_i x^i.$$ If we assume that we can differentiate the power series term by term so that $$\gamma'(x) = \sum_{i=0}^{\infty} (i+1) a_{i+1} x^i$$ and $$\gamma''(x) = \sum_{i=0}^{\infty} (i+2)(i+1) a_{i+2} x^i,$$ then the condition that $\gamma$ satisfies~(\ref{gamma:ode:appendix}): $$\gamma'' = -\frac{1}{2} \gamma + \frac{1}{2} x \gamma' - \frac{1}{x} \gamma' - \frac{1}{2} \gamma (\gamma')^2 + \frac{1}{2} x (\gamma')^3 - \frac{1}{x} (\gamma')^3,$$ is a condition on the coefficients $\{a_i\}$. Namely, $a_0 = \gamma(0)$, $a_1=0$, and
\begin{eqnarray}
(m+2)(m+1)a_{m+2} & = & -\frac{1}{2} a_m + \frac{1}{2} m a_m - (m+2) a_{m+2} \nonumber \\
& &- \frac{1}{2} \sum_{i+j+k = m}(i+1)(j+1) a_{i+1} a_{j+1} a_k \nonumber \\
& & + \frac{1}{2} \sum_{i+j+k = m -1}(i+1)(j+1)(k+1) a_{i+1} a_{j+1} a_{k+1} \nonumber \\
& & - \sum_{i+j+k = m + 1} (i+1)(j+1)(k+1) a_{i+1} a_{j+1} a_{k+1}. \nonumber
\end{eqnarray}
The previous equation simplifies to:
\begin{eqnarray}
\label{coef:formula}
(m+2)^2 a_{m+2} & = & \frac{1}{2}(m-1) a_m - \frac{1}{2} a_0 \sum_{i+j = m}(i+1)(j+1) a_{i+1} a_{j+1} \\
& & + \frac{1}{2}  \sum_{i+j+k=m-1} (i+1)(j+1)k \cdot a_{i+1} a_{j+1} a_{k+1} \nonumber \\
& & - \sum_{i+j+k = m + 1} (i+1)(j+1)(k+1) a_{i+1} a_{j+1} a_{k+1}. \nonumber
\end{eqnarray}

\begin{claim}
$a_{2i+1}=0$
\end{claim}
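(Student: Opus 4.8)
The plan is to prove by strong induction on $k$ that $a_k = 0$ for every odd $k \ge 1$; since the claim asserts exactly that $a_{2i+1} = 0$ for all $i \ge 0$, this suffices. The base case $k = 1$ is the identity $a_1 = 0$ recorded right after the derivation of the recurrence. For the inductive step I fix $n \ge 1$, assume $a_k = 0$ for all odd $k$ with $1 \le k \le 2n - 1$, and apply~(\ref{coef:formula}) with $m = 2n - 1$; its left side is then $(2n+1)^2 a_{2n+1}$, so it is enough to show that each of the four terms on the right side vanishes, after which $a_{2n+1} = 0$.

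Each term is handled by a parity count. The term $\tfrac{1}{2}(m-1)a_m$ equals $(n-1)a_{2n-1}$, which is $0$ by the inductive hypothesis (and trivially when $n = 1$). In the first convolution sum the two indices $i+1, j+1$ are positive and sum to $2n+1$, so exactly one of them is odd; since its partner is even and hence $\ge 2$, that odd index lies in $\{1, 3, \dots, 2n-1\}$, so the corresponding $a$ vanishes by the inductive hypothesis. In the second convolution sum the three indices $i+1, j+1, k+1$ are positive and sum to $2n+1$; the odd total forces at least one of them to be odd, and since each is at most $2n - 1$ (the other two being at least $1$), that odd index again lies in $\{1, \dots, 2n-1\}$ and kills the term.

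The last sum requires one extra observation: there the three positive indices sum to $2n+3$, so an index may be as large as $2n+1$. The odd total still forces at least one odd index; if it is at most $2n-1$ we finish as before, and otherwise it equals $2n+1$, which forces the other two indices to both equal $1$, so the term carries a factor $a_1 = 0$. In every case the right side of~(\ref{coef:formula}) is zero, so $a_{2n+1} = 0$, completing the induction. I expect no real difficulty here; the only delicate point is this boundary case in the last sum, and it is precisely where the hypothesis $a_1 = 0$, i.e.\ $\gamma'(0) = 0$, is used — reflecting the fact that before the simplification to~(\ref{coef:formula}) the coefficient $a_{m+2}$ itself appears on the right side.
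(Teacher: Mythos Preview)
Your proof is correct and follows essentially the same approach as the paper: strong induction on the odd index, using the recurrence~(\ref{coef:formula}) together with $a_1=0$ to show that every summand contains a factor $a_{\text{odd}}$ with odd index already covered by the hypothesis. The paper states this in one sentence (``every term in the expression for $a_{2m+1}$ contains a term of the form $a_{2i+1}$''), whereas you have carefully carried out the parity count term by term, including the boundary case in the last sum where an index can reach $2n+1$ but then the remaining two indices are forced to be $1$.
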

\begin{proof}
This follows from the above formula for $a_m$ and induction.  We know $a_1 = 0$. Suppose $a_{2i+1} = 0$ for all $i<m$ and consider $a_{2m+1}$. Every term in the expression for $a_{2m+1}$ contains a term of the form $a_{2i+1}$, and thus $a_{2m+1}=0$.
\end{proof}

In order to construct a solution of~(\ref{gamma:ode:appendix}) we need $\sum a_i x^i$ to be a convergent power series, and hence we need an estimate on the coefficients $a_{2m}$. 

\begin{claim}
For each $M>0$, there exists $A=A(M)>0$ so that if $|a_0| \leq M$, then $$|a_{2m}| \leq \frac{A^{2m-1}}{(2m)^3}.$$
\end{claim}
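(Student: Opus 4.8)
The plan is to prove the bound by strong induction on the (even) index, fixing the constant $A = A(M)$ only at the end, after all the estimates are in place.

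I would first isolate an elementary convolution estimate used repeatedly below: there are absolute constants $C_2, C_3$ so that, with the sums over positive integers,
$$\sum_{p+q = N} \frac{1}{p^2 q^2} \le \frac{C_2}{N^2}, \qquad \sum_{p+q+r = N} \frac{1}{p^2 q^2 r^2} \le \frac{C_3}{N^2} \qquad (N \ge 2).$$
Both follow by splitting according to which index is largest: that index is $\ge N/2$ (resp.\ $\ge N/3$), which supplies the factor $N^{-2}$, while the remaining one (resp.\ two) indices are summed freely against $\sum_{p\ge 1} p^{-2} = \zeta(2)$; this gives $C_2 = 8\zeta(2)$ and $C_3 = 27\zeta(2)^2 = 3\pi^4/4$. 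It is essential that these sums decay like $N^{-2}$ and are not merely bounded.

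For the base case, the recursion~(\ref{coef:formula}) with $m=0$, together with $a_1 = 0$, gives $a_2 = -a_0/8$, so $|a_2| \le |a_0|/8 \le M/8$, which is the asserted bound for $m=1$ as soon as $A \ge M$ (note $A^{2m-1}/(2m)^3 = A/8$ there). Since the odd coefficients vanish, only even indices occur, so I run the induction over $m' \ge 1$, the hypothesis being $|a_{2k}| \le A^{2k-1}/(2k)^3$ for all $1 \le k < m'$. For the inductive step I apply~(\ref{coef:formula}) with $m = 2m'-2$, so the left-hand side is $(2m')^2 a_{2m'}$, and it suffices to bound the four terms on the right by quantities whose sum is at most $(2m')^2 A^{2m'-1}/(2m')^3 = A^{2m'-1}/(2m')$. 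Writing $i+1 = 2p$, $j+1 = 2q$, $k+1 = 2r$ (the only nonzero terms) turns each constrained sum into a convolution of the coefficients $a_{2k}$ in which every index is strictly below $2m'$, so the induction hypothesis applies term by term. Feeding in the hypothesis and the convolution estimate, the linear term $\tfrac12(2m'-3)a_{2m'-2}$ is $\le A^{2m'-3}/(m')^2$ (using $(2m'-2)^3 \ge (m')^3$ for $m' \ge 2$); the $a_0$-times-quadratic term is $\le M C_2 A^{2m'-2}/(32(m')^2)$; the lower-order cubic term is $\le C_3 A^{2m'-3}/(128(m')^2)$; and the top-order cubic term $-\sum_{i+j+k=2m'-1}(\cdots)$ is $\le C_3 A^{2m'-1}/(64(m'+1)^2)$. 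Expressed as multiples of $A^{2m'-1}/(2m')$, the first three carry an extra factor of size $O(A^{-2})$, $O(M/A)$, $O(A^{-2})$ respectively, while the last, after using $m'/(m'+1)^2 \le \tfrac14$, is at most $(C_3/128)\,A^{2m'-1}/(2m')$.

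The only genuine obstacle is that the top-order cubic term scales exactly as $A^{2m'-1}$ with no spare power of $A$, so enlarging $A$ cannot shrink it; one must verify that its coefficient $C_3/128 = 3\pi^4/512$ is strictly less than $1$. It is (about $0.57$), and this is exactly where the $N^{-2}$ decay of the triple convolution, together with $m'/(m'+1)^2 \le \tfrac14$, is used. Granting this, the total coefficient of $A^{2m'-1}/(2m')$ coming from the four terms is $C_3/128 + O(M/A) + O(A^{-2})$, which is $\le 1$ once $A$ is chosen large enough in terms of $M$; taking $A = A(M)$ that large and also $\ge M$ closes the induction and proves the claim. (As a consequence $\limsup_m |a_{2m}|^{1/(2m)} \le A$, so the power series $\sum a_i x^i$ converges on $[0,1/A]$, as needed in the sequel.)
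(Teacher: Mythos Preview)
Your proof is correct and follows essentially the same scheme as the paper's: strong induction on the even index, convolution estimates of the form $\sum_{p+q=N} p^{-2}q^{-2} \lesssim N^{-2}$ and $\sum_{p+q+r=N} p^{-2}q^{-2}r^{-2} \lesssim N^{-2}$, and the observation that among the four terms in the recursion only the top-order cubic scales like $A^{2m'-1}$ with no spare power of $A$, so its absolute coefficient must be checked to be strictly below $1$.

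The one noteworthy difference is how the convolution constants are obtained. The paper derives them from the partial-fraction identity
\[
\sum_{\substack{i+j=2N\\ i,j \text{ odd}}} \frac{1}{(i+1)^2(j+1)^2} = \frac{2}{(2N+2)^2}\sum_{\substack{i=1\\ i \text{ odd}}}^{2N-1}\frac{1}{(i+1)^2} + \frac{4}{(2N+2)^3}\sum_{\substack{i=1\\ i \text{ odd}}}^{2N-1}\frac{1}{i+1},
\]
which yields the sharp-looking bounds $2/(2N+2)^2$ and $4/(2N+2)^2$. With these, all four coefficients in the inductive step tend to $0$ in $m$, and the paper simply checks their sum is $\le 1$ for $m\ge 3$, absorbing $m=1,2,3$ into the base case and imposing no further size condition on $A$. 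Your splitting argument gives the larger constants $C_2=8\zeta(2)$ and $C_3=3\pi^4/4$; the top-order cubic then contributes a coefficient $C_3/128\approx 0.57$ that does \emph{not} tend to $0$, but since it is still $<1$ you can soak up the remaining $O(M/A)$ and $O(A^{-2})$ terms by enlarging $A$. Both routes close the induction; yours is perhaps more robust (it would survive small perturbations of the nonlinearity), while the paper's makes the role of $A$ slightly cleaner.
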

\begin{proof}
Fix $M>0$, and use~(\ref{coef:formula}) to choose $A>0$ so that the estimate holds for $m=1,2,3$. Arguing inductively, suppose $a_{2i} \leq \frac{A^{2i-1}}{(2i)^3}$ when $i \leq m$, and consider $a_{2m+2}$. From~(\ref{coef:formula}), we have
\begin{eqnarray}
\label{ps:even}
(2m+2)^2 a_{2m+2} & = & \frac{1}{2}(2m-1) a_{2m} \\
& & - \frac{1}{2} a_0 \sum_{ \substack{i+j = 2m \\ i,j \textrm{ odd} } }(i+1)(j+1) a_{i+1} a_{j+1} \nonumber \\
& & + \frac{1}{2}  \sum_{ \substack{i+j+k=2m-1 \\ i,j,k \textrm{ odd} } } (i+1)(j+1)k \cdot a_{i+1} a_{j+1} a_{k+1} \nonumber \\
& & - \sum_{ \substack{i+j+k = 2m + 1 \\ i,j,k \textrm{ odd} } } (i+1)(j+1)(k+1) a_{i+1} a_{j+1} a_{k+1}. \nonumber
\end{eqnarray}

To estimate the terms with sums, we need the following inequality: 
\begin{equation}
\label{ps:in1}\sum_{ \substack{i+j = 2N \\ i,j \textrm{ odd} } } \frac{1}{(i+1)^2} \frac{1}{(j+1)^2} \leq \frac{2}{(2N+2)^2}.
\end{equation}
This inequality follows from the identity: $$\sum_{ \substack{i+j = 2N \\ i,j \textrm{ odd} } } \frac{1}{(i+1)^2} \frac{1}{(j+1)^2} = \frac{2}{(2N+2)^2} \sum_{ \substack{i=1 \\ i \textrm{ odd}} }^{2N-1} \frac{1}{(i+1)^2} + \frac{4}{(2N+2)^3} \sum_{ \substack{i=1 \\ i \textrm{ odd}} }^{2N-1} \frac{1}{i+1}.$$ Applying~(\ref{ps:in1}) twice, we have the following inequality:
\begin{equation}
\label{ps:in2}
\sum_{ \substack{i+j+k = 2N-1 \\ i,j,k \textrm{ odd} } } \frac{1}{(i+1)^2} \frac{1}{(j+1)^2} \frac{1}{(k+1)^2} \leq \frac{4}{(2N+2)^2}.
\end{equation}

Now, we can use~(\ref{ps:in1}),~(\ref{ps:in2}), and the inductive hypothesis to estimate each term on the right hand side of formula~(\ref{ps:even}):
\begin{equation}
\frac{1}{2}(2m-1)|a_{2m}| \leq \frac{1}{2} \frac{2m-1}{(2m)^3} A^{2m-1} \leq \left[ \frac{m+1}{(2m)^2} \right] \frac{1}{2m+2} A^{2m+1}, \nonumber
\end{equation}
\begin{equation}
\frac{1}{2} \left| a_0 \sum_{ \substack{i+j = 2m \\ i,j \textrm{ odd} } }(i+1)(j+1) a_{i+1} a_{j+1} \right| \leq \left[ \frac{1}{2m+2} \right] \frac{1}{2m+2} A^{2m+1}, \nonumber
\end{equation}
\begin{equation}
\frac{1}{2}  \left| \sum_{ \substack{i+j+k=2m-1 \\ i,j,k \textrm{ odd} } } (i+1)(j+1)k \cdot a_{i+1} a_{j+1} a_{k+1} \right| \leq \left[ \frac{1}{m+1} \right] \frac{1}{2m+2} A^{2m+1}, \nonumber
\end{equation}
\begin{equation}
\left| \sum_{ \substack{i+j+k = 2m + 1 \\ i,j,k \textrm{ odd} } } (i+1)(j+1)(k+1) a_{i+1} a_{j+1} a_{k+1} \right| \leq \left[ \frac{2}{m+2} \right] \frac{1}{2m+2} A^{2m+1}. \nonumber
\end{equation}
Applying these estimates to~(\ref{ps:even}) when $m \geq 3$, we see that
\begin{eqnarray}
(2m+2)^2 |a_{2m+2}| & \leq & \left[\frac{m+1}{(2m)^2} + \frac{1}{2m+2} + \frac{1}{m+1} + \frac{2}{m+2} \right] \frac{1}{2m+2} A^{2m+1} \nonumber \\
& \leq & \frac{1}{2m+2} A^{2m+1}. \nonumber
\end{eqnarray}
\end{proof}

The estimates on the coefficients $a_i$ imply that the power series $\gamma(x) = \sum a_i x^i$ is an analytic function on $[0,1/A]$. By the previous discussion, $\gamma(x)$ is the unique analytic solution of~(\ref{gamma:ode:appendix}) in $(0,1/A]$. Furthermore, since the coefficients $a_i$ depend continuously on $a_0$, the solution $\gamma(x)$ depends continuously on the initial height $\gamma(0)$.


\bibliographystyle{amsplain}

\end{document}